\newtheorem{thm}{Theorem}
\newtheorem{lem}[thm]{Lemma}
\newtheorem{cor}[thm]{Corollary}
\theoremstyle{definition}
\newtheorem*{remark}{Remark}
\xpatchcmd{\proof}{\itshape}{\normalfont\proofnameformat}{}{}
\newcommand{\proofnameformat}{}
\begin{document}

\renewcommand{\proofnameformat}{\bfseries}

\begin{center}
{\Large\textbf{Equidistribution of continued fraction convergents in $\mathrm{SL}(2,\mathbb{Z}_m)$ with an application to local discrepancy}}

\vspace{10mm}

\textbf{Bence Borda}

{\footnotesize Graz University of Technology

Steyrergasse 30, 8010 Graz, Austria

Email: \texttt{borda@math.tugraz.at}}

\vspace{5mm}

{\footnotesize \textbf{Keywords:} convergent denominators mod $m$, Gauss--Kuzmin problem,\\ limit law, invariance principle, irrational rotation}

{\footnotesize \textbf{Mathematics Subject Classification (2020):} 11K50, 37A50, 37E10}
\end{center}

\vspace{5mm}

\begin{abstract}
Consider the sequence of continued fraction convergents $p_n/q_n$ to a random irrational number. We study the distribution of the sequences $p_n \pmod{m}$ and $q_n \pmod{m}$ with a fixed modulus $m$, and more generally, the distribution of the $2 \times 2$ matrix with entries $p_{n-1}, p_n, q_{n-1}, q_n \pmod{m}$. Improving the strong law of large numbers due to Sz\"usz, Moeckel, Jager and Liardet, we establish the central limit theorem and the law of the iterated logarithm, as well as the weak and the almost sure invariance principles. As an application, we find the limit distribution of the maximum and the minimum of the Birkhoff sum for the irrational rotation with the indicator of an interval as test function. We also compute the normalizing constant in a classical limit law for the same Birkhoff sum due to Kesten, and dispel a misconception about its dependence on the test interval.
\end{abstract}

\section{Introduction}

The statistical properties of the continued fraction expansion $\alpha = [0;a_1,a_2,\ldots]$ of a random real number $\alpha \in [0,1]$ is a classical topic in metric number theory. The partial quotients $a_n$ form a weakly dependent sequence of random variables, consequently the sum $\sum_{n=1}^N f(a_n)$ satisfies various probabilistic limit theorems depending on the growth rate of the function $f: \mathbb{N} \to \mathbb{R}$. The asymptotic behavior of the convergents $p_n/q_n = [0;a_1,a_2,\ldots, a_n]$ is also well known. For instance, we have $\log q_n \sim \frac{\pi^2}{12 \log 2} n$ for a.e.\ $\alpha$, and $\log q_n$ even satisfies the central limit theorem (CLT) and the law of the iterated logarithm (LIL). We refer to the monograph \cite{IK} for a comprehensive survey.

The main subject of this paper is the distribution of the sequences $p_n \pmod{m}$ and $q_n \pmod{m}$ in $\mathbb{Z}_m$ with a fixed integer $m \ge 2$. One of the first results in the area is due to Sz\"usz \cite[Satz 3.3]{SZ}, who showed that for any $a \in \mathbb{Z}_m$,
\begin{equation}\label{qnmodmae}
\lim_{N \to \infty} \frac{1}{N} \sum_{n=1}^N \mathds{1}_{\{ q_n \equiv a \pmod{m} \}} = \frac{\prod_{p \mid \mathrm{gcd} (a,m)} \left( 1-\frac{1}{p} \right)}{m \prod_{p \mid m} \left( 1-\frac{1}{p^2} \right)} \quad \textrm{for a.e. } \alpha,
\end{equation}
where the products are over prime divisors. Contrary to what one might naively expect, certain residue classes are thus attained more frequently than others. For instance, only one third of all convergent denominators are even, while two thirds of them are odd.

The origin of the limit in \eqref{qnmodmae} becomes transparent when we consider the joint distribution of the quadruple $(p_{n-1},p_n,q_{n-1},q_n) \pmod{m}$, and work in the group $\mathrm{SL}(2,\mathbb{Z}_m)$. The recursions for $p_n$ and $q_n$, with the usual convention $p_0=0$, $q_0=1$, can be written in matrix form as
\[ \left( \begin{array}{cc} 0 & 1 \\ 1 & a_1 \end{array} \right) \left( \begin{array}{cc} 0 & 1 \\ 1 & a_2 \end{array} \right) \cdots \left( \begin{array}{cc} 0 & 1 \\ 1 & a_n \end{array} \right) = \left( \begin{array}{cc} p_{n-1} & p_n \\ q_{n-1} & q_n \end{array} \right) , \]
where the right-hand side has determinant $(-1)^n$. Taking the previous formula mod $m$ entrywise leads to the $2 \times 2$ matrix with entries in $\mathbb{Z}_m$
\[ P_n = \left( \begin{array}{cc} p_{n-1} & p_n \\ q_{n-1} & q_n \end{array} \right) \pmod{m} . \]
For the rest of the paper, let
\[ G_D= \left\{ \left( \begin{array}{cc} a & b \\ c & d \end{array} \right) \, : \, a,b,c,d \in \mathbb{Z}_m, \,\, ad-bc=D \right\}, \qquad D \in \mathbb{Z}_m^* = \{ a \in \mathbb{Z}_m \, : \, \mathrm{gcd} (a,m)=1 \} , \]
and let $G=G_1 \cup G_{-1}$. Note that $G_1=\mathrm{SL}(2,\mathbb{Z}_m)$ is a normal subgroup of $\cup_{D \in \mathbb{Z}_m^*} G_D = \mathrm{GL}(2,\mathbb{Z}_m)$, and $G_D$ is a coset of $G_1$. If $m=2$, then $G_1=G_{-1}=\mathrm{SL}(2,\mathbb{Z}_2)=\mathrm{GL}(2,\mathbb{Z}_2)$, otherwise $G_1 \neq G_{-1}$.

It turns out that the sequence $P_n$ equidistributes in $G$, that is, for any $g \in G$,
\begin{equation}\label{Pnae}
\lim_{N \to \infty} \frac{1}{N} \sum_{n=1}^N \mathds{1}_{\{ P_n =g \}} = \frac{1}{|G|} \quad \textrm{for a.e. } \alpha .
\end{equation}
The set of possible rows or columns of matrices in $G_D$ is
\[ V = \{ (a,b) \in \mathbb{Z}_m^2 \, : \, \mathrm{gcd} (a,b,m)=1 \} . \]
In fact, both rows and both columns of a matrix that is uniformly distributed on $G_D$ with some $D \in \mathbb{Z}_m^*$ are uniformly distributed on $V$, see Lemma \ref{SL2Zmlemma} below. Relation \eqref{Pnae} thus immediately implies that for any $(a,b) \in V$,
\begin{equation}\label{pnqnmodmae}
\lim_{N \to \infty} \frac{1}{N} \sum_{n=1}^N \mathds{1}_{\{ (p_n,q_n) \equiv (a,b) \pmod{m} \}} = \frac{1}{|V|} \quad \textrm{for a.e. } \alpha ,
\end{equation}
and the same holds for the pairs $(q_{n-1}, q_n)$ and $(p_{n-1}, p_n)$. For instance, the ``parity type'' of the fraction $p_n/q_n$ can be even/odd, odd/even or odd/odd; note that the type even/even is impossible as $p_n$ and $q_n$ are coprime. The parity type of $p_n/q_n$ thus equidistributes in the set $\{ \textrm{even/odd, odd/even, odd/odd} \}$.

The four entries of a matrix that is uniformly distributed on $G_D$, however, are not uniformly distributed on $\mathbb{Z}_m$. Instead, the distribution of all four entries is the probability measure $\nu$ on $\mathbb{Z}_m$ which assigns the measure
\begin{equation}\label{nudef}
\nu_a = \frac{\prod_{p \mid \mathrm{gcd} (a,m)} \left( 1-\frac{1}{p} \right)}{m \prod_{p \mid m} \left( 1-\frac{1}{p^2} \right)}, \quad a \in \mathbb{Z}_m
\end{equation}
to the singleton $\{ a \}$, see Lemma \ref{SL2Zmlemma} below. This explains the value of the limit in \eqref{qnmodmae}, and shows that the same relation holds for $p_n$ as well.

Relation \eqref{pnqnmodmae} for the pair $(q_{n-1},q_n)$ was first proved by Sz\"usz \cite[Satz 3.2]{SZ} by following L\'evy's approach to the Gauss--Kuzmin problem about the mixing properties of the Gauss map, but his work seems to have gone mostly unnoticed by the ergodic theory community. Moeckel \cite{MO} used the close relationship between continued fractions and the geodesic flow on the modular surface, and almost proved \eqref{pnqnmodmae}; we say almost, as Moeckel worked with $\mathrm{PSL}(2,\mathbb{Z}_m)$ instead of $\mathrm{SL}(2,\mathbb{Z}_m)$, consequently he only showed equidistribution in the factor $V/\sim$ with the equivalence relation $(a,b) \sim (-a,-b)$. See \cite{FS} for a more recent account and generalizations of Moeckel's approach. Jager and Liardet \cite{JL} worked with the dynamical system $([0,1] \times G, \mu_{\mathrm{Gauss}} \otimes \mathrm{Unif}(G), S)$, where $\mu_{\mathrm{Gauss}}(A)=\frac{1}{\log 2} \int_A \frac{1}{1+x} \, \mathrm{d}x$, $A \subseteq [0,1]$ Borel, is the Gauss measure, $\mathrm{Unif}$ denotes the uniform probability measure on a finite set, and the transformation $S$ is the skew product over the Gauss map $Tx=\{ 1/x \}$ defined as
\begin{equation}\label{skewproduct}
S(x,g) = \left( Tx, g \left( \begin{array}{cc} 0 & 1 \\ 1 & a_1 \pmod{m} \end{array} \right) \right) ,
\end{equation}
with $a_1=\lfloor 1/x \rfloor$ the first partial quotient of $x$. Jager and Liardet proved that this system is ergodic, thereby establishing relation \eqref{Pnae} and consequently also \eqref{pnqnmodmae}.

In the terminology of probability theory, relations \eqref{qnmodmae}, \eqref{Pnae} and \eqref{pnqnmodmae} correspond to the strong law of large numbers. The main goal of the present paper is to extend these results to more precise limit theorems, such as the CLT and the LIL. In fact, we will even establish the weak and the almost sure invariance principles. We refer to Billingsley \cite{BI} for a general introduction to invariance principles and their relation to the ordinary and the functional CLT and LIL.

Counting problems in higher dimensional Diophantine approximation theory under similar congruence conditions have recently been investigated in \cite{AG1,AG2,AG3,AGY,NRS,SW} using dynamical methods. As a higher dimensional analogue of \eqref{pnqnmodmae}, the strong law of large numbers for the number of best rational approximation vectors under congruence conditions was established in \cite[Corollary 3.3]{SW}. Improving the latter result to the CLT and the LIL in the higher dimensional setting remains open.

We now state our main result. Throughout, $\alpha$ is a random variable with distribution $\mu$, a Borel probability measure on $[0,1]$ that is absolutely continuous with respect to the Lebesgue measure $\lambda$. In some of our results we assume that $\mu$ has a Lipschitz density, that is,
\begin{equation}\label{lipschitz}
\left| \frac{\mathrm{d}\mu}{\mathrm{d}\lambda} (x) - \frac{\mathrm{d}\mu}{\mathrm{d}\lambda} (y) \right| \le L|x-y| \quad \textrm{for all } x,y \in [0,1]
\end{equation}
with some constant $L \ge 0$. We write $X \sim \vartheta$ if the random variable $X$ has distribution $\vartheta$, and $\overset{d}{\to}$ denotes convergence in distribution.

Given an arbitrary function $f: G \to \mathbb{R}$, we define the constants $E_f$ and $\sigma_f \ge 0$ as follows. Let $U \sim \mathrm{Unif}(G)$, and define $E_f = \mathbb{E} (f(U))$. Let $\alpha \sim \mu_{\mathrm{Gauss}}$ and $U_{\pm 1} \sim \mathrm{Unif}(G_{\pm 1})$ be independent random variables, let $\bar{f}(x)=f(x)-\mathbb{E} (f(U_{\pm 1}))$ for $x \in G_{\pm 1}$, and define
\begin{equation}\label{sigmadef}
\sigma_f^2 = \frac{1}{2} \mathbb{E} (\bar{f}(U_1)^2) + \sum_{n=1}^{\infty} \mathbb{E} (\bar{f}(U_1) \bar{f}(U_1 P_n)) + \frac{1}{2} \mathbb{E} (\bar{f}(U_{-1})^2) + \sum_{n=1}^{\infty} \mathbb{E} (\bar{f}(U_{-1}) \bar{f}(U_{-1} P_n)) .
\end{equation}

\begin{thm}\label{Gtheorem} Fix an integer $m \ge 2$, and let $f: G \to \mathbb{R}$ be arbitrary.
\begin{enumerate}
\item[(i)] The right-hand side of \eqref{sigmadef} is finite and nonnegative.

\item[(ii)] Let $\alpha \sim \mu$ with $\mu \ll \lambda$. Then the process $\sum_{1 \le n \le tN} f(P_n)$, $t \in [0,1]$ satisfies the functional CLT
\[ \frac{\sum_{1 \le n \le tN} f(P_n) - E_f tN}{\sqrt{N}} \overset{d}{\to} \sigma_f W(t) \]
in the Skorokhod space $\mathcal{D}[0,1]$, where $W(t)$, $t \in [0,1]$ is a standard Wiener process.

\item[(iii)] Let $\alpha \sim \mu$ with $\mu \ll \lambda$, and assume \eqref{lipschitz}. Without changing its distribution, the process $\sum_{1 \le n \le t} f(P_n)$, $t \ge 0$ can be redefined on a richer probability space so that
\[ \sum_{1 \le n \le t} f(P_n) - E_f t = \sigma_f W(t) + O(t^{1/2-\eta}) \quad \textrm{a.s.} \]
with a universal constant $\eta>0$, where $W(t)$, $t \ge 0$ is a standard Wiener process.
\end{enumerate}
\end{thm}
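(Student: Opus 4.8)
The plan is to realize $\sum_{n=1}^N f(P_n)$ as a Birkhoff sum over an ergodic skew product of the Gauss map and then run the usual spectral/martingale machinery; the one nonstandard feature is that the relevant transfer operator carries \emph{two} simple eigenvalues, $+1$ and $-1$, on the unit circle, reflecting $\det P_n=(-1)^n$, while the centered observable turns out to be orthogonal to both eigenspaces. Concretely, let $T$ be the Gauss map, $a_1(x)=\lfloor 1/x\rfloor$, $M(k)=\bigl(\begin{smallmatrix}0&1\\1&k\end{smallmatrix}\bigr)\bmod m\in G_{-1}\subseteq G$, and define $\hat T\colon[0,1]\times G\to[0,1]\times G$ by $\hat T(x,g)=(Tx,\,gM(a_1(x)))$, so that $\hat T^n(\alpha,I)=(T^n\alpha,P_n)$ and, with $\phi(x,g):=f(g)$, $\sum_{n=1}^N f(P_n)=\sum_{n=1}^N\phi(\hat T^n(\alpha,I))$. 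The map $\hat T$ preserves $\mu_G:=\mu_{\mathrm{Gauss}}\times\mathrm{Unif}(G)$ and is ergodic by Jager--Liardet \cite{JL} (relation \eqref{Pnae} is equivalent to the ergodicity of $\hat T^2$ on each of $[0,1]\times G_1$ and $[0,1]\times G_{-1}$, which in turn gives ergodicity of $\hat T$). Setting $\bar\phi(x,g)=f(g)-\mathbb E f(U_{\mathrm{sgn}(g)})$ we have $\sum_{n=1}^N f(P_n)-E_f N=\sum_{n=1}^N\bar\phi(\hat T^n(\alpha,I))+O(1)$, and the Fourier expansion of $\bar f$ on the group $G$ has vanishing coefficient at both the trivial and the sign (determinant) representation, exactly because $\mathbb E\bar f(U_1)=\mathbb E\bar f(U_{-1})=0$.

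For the spectral input, the Gauss--Kuzmin--Wirsing operator $\mathcal L$ is quasi-compact on a suitable Banach space $\mathcal B$ (functions of bounded variation on $[0,1]$, say), with simple leading eigenvalue $1$ and the rest of the spectrum of radius $<1$; the transfer operator $\hat{\mathcal L}$ of $\hat T$ acts on $\mathcal B\otimes\mathbb C[G]$ and splits over the irreducible representations $\rho$ of $G$ into the twisted operators $(\mathcal L_\rho h)(x)=\sum_{k\ge 1}(k+x)^{-2}\rho(M(k))h(1/(k+x))$. Here $\mathcal L_{\mathrm{triv}}=\mathcal L$ and $\mathcal L_{\mathrm{sgn}}=-\mathcal L$ (since $\det M(k)=-1$), so $\pm 1$ are simple eigenvalues of $\hat{\mathcal L}$ with eigenfunctions $\tfrac1{1+x}$ and $\tfrac{\mathrm{sgn}(g)}{1+x}$; for every other $\rho$, ergodicity together with quasi-compactness forces $\mathrm{spr}(\mathcal L_\rho)<1$. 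Consequently $\|\mathbb E_{\mu_{\mathrm{Gauss}}}\rho(P_n)\|=O(\theta^n)$ with a fixed $\theta<1$ for all such $\rho$, whence, expanding $\bar f$ in characters of $G$ (the trivial and sign parts being absent), $\mathbb E_{\mu_{\mathrm{Gauss}}}[\bar f(gP_n)]=O(\theta^n)$ uniformly in $g\in G$. This makes the series in \eqref{sigmadef} converge absolutely, and a direct computation identifies the whole right-hand side of \eqref{sigmadef} with $\lim_{N\to\infty}\frac1N\,\mathrm{Var}_{\mu_G}\!\big(\sum_{n=1}^N\bar\phi(\hat T^n\cdot)\big)\ge 0$, proving (i).

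Because $\bar\phi$ is orthogonal to both peripheral eigenprojections of $\hat{\mathcal L}$, the series $\psi:=\sum_{n\ge 0}\hat{\mathcal L}^n\bar\phi$ converges geometrically in $\mathcal B\otimes\mathbb C[G]$, yielding a Gordin decomposition $\bar\phi\circ\hat T=\chi+\psi\circ\hat T-\psi$ with $\hat{\mathcal L}\chi=0$; thus $(\chi\circ\hat T^n)_{n\ge 0}$ is a bounded, stationary, ergodic reverse-martingale-difference sequence with $\mathbb E_{\mu_G}\chi^2=\sigma_f^2$, and $\sum_{n=1}^N\bar\phi(\hat T^n\cdot)=\sum_{n=0}^{N-1}\chi(\hat T^n\cdot)+O(1)$ a.s. Part (ii) for $\alpha\sim\mu_{\mathrm{Gauss}}$ then follows from the functional CLT for stationary ergodic martingale differences, and the extension to arbitrary $\mu\ll\lambda$ is routine, since the initial measure $\mu\times\delta_I$ is absolutely continuous with respect to the invariant measure $\mu_G$. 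For (iii), under \eqref{lipschitz} the density $\mathrm d\mu/\mathrm d\lambda$ lies in $\mathcal B$ with controlled norm, and combining the martingale decomposition with an almost sure invariance principle for stationary ergodic reverse martingale differences (alternatively, with Gouëzel's spectral ASIP after absorbing the peripheral eigenvalue $-1$ via the orthogonality of $\bar\phi$) produces the Brownian coupling with error $O(t^{1/2-\eta})$, the coboundary term $\psi\circ\hat T^{\lfloor t\rfloor}-\psi$ contributing only $O(1)$.

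I expect the main obstacle to be the spectral gap $\mathrm{spr}(\mathcal L_\rho)<1$ for the non-trivial, non-sign representations, i.e., upgrading the Jager--Liardet equidistribution to a genuine gap for the twisted operators: this needs a Lasota--Yorke inequality on $\mathcal B$ for quasi-compactness and then the exclusion of unimodular eigenvalues from ergodicity, and throughout one must carry along the forced eigenvalue $-1$ and check at each stage that the centered observable avoids its eigenspace. A secondary point is to pin down exactly where the Lipschitz hypothesis \eqref{lipschitz} (rather than mere $\mu\ll\lambda$) is used in the ASIP.
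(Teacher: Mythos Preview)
Your approach is genuinely different from the paper's and is, in outline, a viable alternative. The paper does not touch transfer operators or martingale approximation at all. Instead it proves a Gauss--Kuzmin--L\'evy theorem (Theorem~\ref{gausskuzmintheorem}) directly---an explicit contraction argument \`a la L\'evy, tracking how the conditional density of $T^{k+n}\alpha$ given $(a_1,\dots,a_k)$ and $P_{k+1,k+n}=g$ evolves with $n$---and reads off exponential $\psi$-mixing of the sequence $(a_n,P_n)$ (Lemma~\ref{psilemma}). From there it simply quotes the Philipp--Stout ASIP for nonstationary $\alpha$-mixing bounded sequences, after computing the variance in Lemma~\ref{variancelemma}; the functional CLT for general $\mu\ll\lambda$ is then deduced from the ASIP under \eqref{lipschitz} via a mollification argument (Corollary~\ref{gausskuzmincorollary}).

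Comparing the two: your spectral route gives a conceptual explanation for the two-step centering $\bar f$ (it kills the two peripheral eigenprojections of $\hat{\mathcal L}$), and the Gordin decomposition cleanly identifies $\sigma_f^2$ with a Green--Kubo sum. The paper's route is more elementary and self-contained---no Lasota--Yorke, no representation theory, no Gou\"ezel---and in addition yields an explicit mixing rate $\tau\ge 1/(12(m+1)^8)$. One caution on your side: the assertion that ``ergodicity together with quasi-compactness forces $\mathrm{spr}(\mathcal L_\rho)<1$'' is a little glib. Ergodicity of $\hat T$ rules out only the eigenvalue $1$, and ergodicity of $\hat T^2$ only $\pm 1$; to exclude all other unimodular eigenvalues you need weak mixing of $\hat T$. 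For a finite group extension this does follow from ergodicity once the base is K (as the Gauss map is), but that is an extra input you should name explicitly. Alternatively, proving $\mathrm{spr}(\mathcal L_\rho)<1$ directly unwinds to essentially the estimate the paper proves in Theorem~\ref{gausskuzmintheorem}. Finally, for (iii) under a non-invariant $\mu$ the stationary martingale ASIP does not apply as stated, so you really do need Gou\"ezel's spectral ASIP (or a coupling back to $\mu_{\mathrm{Gauss}}$); this is precisely where \eqref{lipschitz} enters in your framework, placing $\mathrm d\mu/\mathrm d\lambda$ in the Banach space on which the gap holds---mirroring how the paper uses \eqref{lipschitz} in Theorem~\ref{gausskuzmintheorem} to obtain the uniform exponential rate required by Philipp--Stout.
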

\noindent Theorem \ref{Gtheorem} (ii) immediately implies that the sum $\sum_{n=1}^N f(P_n)$ satisfies the CLT
\[ \frac{\sum_{n=1}^N f(P_n) - E_f N}{\sqrt{N}} \overset{d}{\to} \mathcal{N}(0,\sigma_f^2) , \]
where $\mathcal{N}(a,\sigma^2)$ denotes the normal distribution with mean $a$ and variance $\sigma^2$ (or the constant $a$ in case $\sigma=0$), whereas Theorem \ref{Gtheorem} (iii) implies the LIL
\[ \limsup_{N \to \infty} \frac{\sum_{n=1}^N f(P_n) - E_f N}{\sqrt{2 N \log \log N}} = \sigma_f \quad \textrm{for a.e. } \alpha . \]
Theorem \ref{Gtheorem} applied to functions $f$ supported on $G_1$ resp.\ $G_{-1}$ describes the quantitative equidistribution of $P_{2n}$ resp.\ $P_{2n-1}$ in $G_1$ resp.\ $G_{-1}$. We conjecture that $\sigma_f=0$ if and only if both restrictions $f \mid_{G_{1}}$ and $f \mid_{G_{-1}}$ are constant functions (in which case the sequence $f(P_n)$ is deterministic), but this remains open.

Given an arbitrary function $f: V \to \mathbb{R}$, applying Theorem \ref{Gtheorem} to the map $\left( \begin{array}{cc} a & b \\ c & d \end{array} \right) \mapsto f(b,d)$ shows that the sum $\sum_{n=1}^N f((p_n,q_n) \pmod{m})$ satisfies the CLT
\[ \frac{\sum_{n=1}^N f((p_n,q_n) \pmod{m}) - E N}{\sqrt{N}} \overset{d}{\to} \mathcal{N} (0,\sigma^2) \]
and the LIL
\[ \limsup_{N \to \infty} \frac{\sum_{n=1}^N f((p_n,q_n) \pmod{m}) - E N}{\sqrt{2 N \log \log N}} = \sigma \quad \textrm{for a.e. } \alpha, \]
where $E=\mathbb{E} (f(U))$ with $U \sim \mathrm{Unif} (V)$, and the constant $\sigma \ge 0$ depends only on $f$. The same holds for the pairs $(q_{n-1}, q_n)$ and $(p_{n-1}, p_n)$.

Given an arbitrary function $f: \mathbb{Z}_m \to \mathbb{R}$, the correct centering term is defined using an auxiliary random variable $U \sim \nu$, that is, $E= \mathbb{E} (f(U)) = \sum_{a \in \mathbb{Z}_m} \nu_a f(a)$ with $\nu_a$ defined in \eqref{nudef}. Then the sum $\sum_{n=1}^N f(q_n \pmod{m})$ satisfies the CLT
\[ \frac{\sum_{n=1}^N f(q_n \pmod{m}) - EN}{\sqrt{N}} \overset{d}{\to} \mathcal{N}(0,\sigma^2) \]
and the LIL
\[ \limsup_{N \to \infty} \frac{\sum_{n=1}^N f(q_n \pmod{m}) - EN}{\sqrt{2 N \log \log N}} = \sigma \quad \textrm{for a.e. } \alpha \]
with a suitable constant $\sigma \ge 0$ depending only on $f$. The same holds for $p_n$.

The key ingredient in the proof of Theorem \ref{Gtheorem} is a version of the Gauss--Kuzmin--L\'evy theorem for the skew product \eqref{skewproduct}, see Theorem \ref{gausskuzmintheorem} below, which we prove using the Perron--Frobenius operator. As a corollary, in Lemma \ref{psilemma} we show that the sequence $P_n$ is $\psi$-mixing with exponential rate. The weak and the almost sure invariance principles then follow from general results of Philipp and Stout \cite{PS} on the partial sums of weakly dependent random variables. We refer to Liverani \cite{LI} for further applications of the Perron--Frobenius operator to mixing in dynamical systems.

In fact, we also prove that the sequence $(a_n,P_n)$ is $\psi$-mixing with exponential rate, from which various probabilistic limit theorems follow for the sum $\sum_{n=1}^N f(a_n,P_n)$ depending on the growth rate of $f$ in its first variable. As an application, in Section \ref{discrepancysection} we find the limit distribution of the maximum and the minimum of certain Birkhoff sums for the circle rotation. In Section \ref{SL2Zmsection}, we gather the necessary facts about the group $\mathrm{SL}(2,\mathbb{Z}_m)$. The proofs are given in Sections \ref{kestensection}, \ref{Pnsection} and \ref{limitlawsection}.

\section{An application to local discrepancy}\label{discrepancysection}

The circle rotation $x \mapsto x+\alpha \pmod{\mathbb{Z}}$ on $\mathbb{R}/\mathbb{Z}$ with a given irrational $\alpha$ is perhaps the simplest discrete time dynamical system. The system is uniquely ergodic, consequently Birkhoff sums satisfy $\sum_{n=1}^N f(n \alpha + \beta) = N \int_0^1 f(x) \, \mathrm{d}x+o(N)$ for any starting point $\beta$ and any $1$-periodic function $f$ that is Riemann integrable on $[0,1]$. The remainder term $o(N)$ depends sensitively on the continued fraction expansion of $\alpha$ and the function $f$, and satisfies various probabilistic limit theorems, see \cite{DF3} for a survey.

A classical example is $f(x)=\mathds{1}_{[0,r]}(\{ x \}) -r$ with a fixed $r \in (0,1)$, the centered indicator function of the interval $[0,r]$ extended with period $1$, where $\{ \cdot \}$ denotes the fractional part. The first limit theorem for the corresponding Birkhoff sum is due to Kesten \cite{KE1,KE2}, who proved that if $(\alpha, \beta)$ is a random variable uniformly distributed on the unit square, then
\begin{equation}\label{kesten1}
\frac{\sum_{n=1}^N \mathds{1}_{[0,r]} (\{ n \alpha + \beta \}) -rN}{\sigma \log N} \overset{d}{\to} \mathrm{Cauchy}.
\end{equation}
Here ``Cauchy'' denotes the standard Cauchy distribution, with density function $1/(\pi(1+x^2))$. Because of the random starting point $\beta$, the same holds for any subinterval of $[0,1]$ of length $r$.

Kesten gave a complicated but explicit formula for the normalizing constant $\sigma>0$, see Section \ref{kestensection}. He showed that the value of $\sigma$ is the same for all irrational $r$, but his formula involves $r$ if $r$ is rational. This apparent dependence of $\sigma$ on $r$ in the rational case has been cited by several authors. Disproving this long-held view, we show that the dependence is illusory, and \eqref{kesten1} holds with the same value of $\sigma$ for both rational and irrational $r$.
\begin{thm}\label{kestentheorem} Kesten's limit law \eqref{kesten1} holds with $\sigma=1/(3 \pi)$ for all $r \in (0,1)$.
\end{thm}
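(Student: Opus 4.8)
The plan is to reduce the Birkhoff sum to a sum over continued fraction levels, feed it into the limit theory the paper develops for sums $\sum_n f(a_n,P_n)$, and thereby express $\sigma$ as an explicit average over a ``boundary parameter'' of the interval $[0,r]$. Write $\psi(x)=\{x\}-1/2$ for the periodized sawtooth and $\Psi_N(\alpha,\gamma)=\sum_{n=1}^N\psi(\{n\alpha+\gamma\})$. From $\mathds{1}_{[0,r]}(\{x\})-r=\psi(\{x-r\})-\psi(\{x\})$ (valid a.e.) we get
\[ \sum_{n=1}^N\mathds{1}_{[0,r]}(\{n\alpha+\beta\})-rN=\Psi_N(\alpha,\beta-r)-\Psi_N(\alpha,\beta). \]
Expanding $N=\sum_k b_{k+1}q_k$ in the Ostrowski numeration attached to $\alpha$, and combining the Denjoy--Koksma inequality for the bounded-variation function $\psi$ with the three-distance theorem, one splits each $\Psi_N(\alpha,\gamma)$ into a sum over levels $k=1,\dots,K$, where $K=K_N$ is defined by $q_K\le N<q_{K+1}$ and $K/\log N\to\frac{12\log 2}{\pi^2}$ for a.e.\ $\alpha$ (since $\log q_n\sim\frac{\pi^2}{12\log 2}n$). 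Up to a negligible error the level-$k$ contribution to the difference is $(-1)^k b_{k+1}$ times a profile $Y_k$; since $\beta$ is uniform, the data determining $Y_k$ reduces to a variable $u_k$ that is asymptotically $\mathrm{Unif}([0,1))$ (and suitably weakly dependent on the rest) together with a boundary parameter $\tau_k$ equal to $\{q_k r\}$ or $\{q_{k-1}r\}$ according to parity and bookkeeping, and the sawtooth identity again gives $Y_k=\psi(\{u_k+\tau_k\})-\psi(u_k)$, so $\mathbb{E}_{u_k}|Y_k|=2\tau_k(1-\tau_k)$.

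Next I would bring in the mixing. For rational $r=a/q$ the boundary parameter is a function of the residue $q_k\bmod q$, which is an entry of $P_k$ for the modulus $m=q$; hence Lemma~\ref{psilemma}, the analogous statement for $(a_n,P_n)$, and its extension to the Ostrowski digits $b_{k+1}$ of $N$ make $(-1)^k b_{k+1}Y_k$ a weakly dependent sequence, and for irrational $r$ the same holds after adjoining an auxiliary coordinate carrying $\{q_k r\}$. As $b_{k+1}$ inherits the Gauss--Kuzmin heavy tail of the partial quotients, the paper's limit theorems for $\sum_n f(a_n,P_n)$ with $f$ of linear growth in its first variable apply; for $(\alpha,\beta)$ uniform on the unit square they give that $\tfrac1K\sum_{k\le K}(-1)^k b_{k+1}Y_k$ converges (after a centering which the alternation $(-1)^k$ reduces to $o(K)$) to a symmetric Cauchy law whose scale is $\pi$ times the averaged two-sided tail constant of the summands. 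Combined with $K/\log N\to\frac{12\log 2}{\pi^2}$ this yields \eqref{kesten1} with
\[ \sigma=C\cdot\lim_{K\to\infty}\frac1K\sum_{k=1}^K\tau_k(1-\tau_k) \]
for a universal constant $C>0$ extracted by tracking the Gauss--Kuzmin tail constant of the Ostrowski digits, the growth rate $\pi^2/(12\log 2)$, and the Cauchy normalization; in particular $\sigma$ depends on $r$ only through the asymptotic distribution of $\tau_k$, and the same computation reproduces Kesten's explicit formula from \cite{KE1,KE2}.

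The point of the theorem is that this asymptotic distribution, hence $\sigma$, is the same for every $r$. For irrational $r$ we have $q_k\to\infty$ and $\{q_k r\}$ equidistributed mod $1$ for a.e.\ $\alpha$ (which is already implicit in Kesten's treatment of the irrational case), so $\tfrac1K\sum_{k\le K}\tau_k(1-\tau_k)\to\int_0^1 t(1-t)\,\mathrm{d}t=1/6$; evaluating $C$ then gives $\sigma=1/(3\pi)$, which is the value of Kesten's constant in the irrational case, now in closed form. For rational $r=a/q$ in lowest terms, $\{q_k r\}$ is determined by $q_k\bmod q$, whose asymptotic distribution is the measure $\nu$ on $\mathbb{Z}_q$ of \eqref{nudef} by \eqref{qnmodmae} (equivalently Theorem~\ref{Gtheorem}) and which $\psi$-mixing renders asymptotically independent of the heavy-tailed data; since $\gcd(ab,q)=\gcd(b,q)$ for $\gcd(a,q)=1$ the measure $\nu$ is invariant under multiplication by $a$, so reindexing yields
\[ \lim_{K\to\infty}\frac1K\sum_{k=1}^K\tau_k(1-\tau_k)=\sum_{b=0}^{q-1}\nu_b\,\frac bq\Big(1-\frac bq\Big)=\frac1{q^2}\sum_{b=0}^{q-1}\nu_b\,b(q-b). \]
Everything thus reduces to the identity $\sum_{b=0}^{q-1}\nu_b\,b(q-b)=q^2/6$ for $q\ge 2$, which I would prove by writing $\nu_b=\frac{\phi(\gcd(b,q))/\gcd(b,q)}{q\prod_{p\mid q}(1-1/p^2)}$, grouping by $d=\gcd(b,q)$, and using the classical evaluations $\sum_{\gcd(b,n)=1,\,1\le b\le n}b=n\phi(n)/2$ and $\sum_{\gcd(b,n)=1,\,1\le b\le n}b^2=\phi(n)n^2/3+(n/6)\prod_{p\mid n}(1-p)$; the identity then collapses to the Dirichlet convolutions $\sum_{d\mid q}\tfrac{\phi(d)}{d}\phi(q/d)=q\prod_{p\mid q}(1-1/p^2)$ and $\sum_{d\mid q}\phi(d)\prod_{p\mid q/d}(1-p)=0$ ($q>1$), both immediate from multiplicativity on prime powers. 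Hence the rational limit is also $1/6$, so $\sigma=1/(3\pi)$ for rational $r$ as well, and the $r$-dependence of Kesten's rational-case formula is seen to be illusory.

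The main obstacle is the analytic work behind the first two steps: making the level-by-level decomposition of $\Psi_N$ rigorous with summable error terms, and --- more seriously --- upgrading from an order-of-magnitude bound on the level contributions to an actual Cauchy limit with the exact scale, which is exactly where the exponential $\psi$-mixing of $(a_n,P_n)$ --- extended to absorb the Ostrowski digits of $N$ and the starting point $\beta$ --- is indispensable; the weak dependence here is delicate because $\tau_k$ is a function of the past of the process while the heavy-tailed factor $b_{k+1}$ is fresh. By comparison the arithmetic crux, the identity $\sum_{b=0}^{q-1}\nu_b\,b(q-b)=q^2/6$, is elementary, but it is exactly the mechanism by which the apparent $r$-dependence in Kesten's rational-case formula evaporates.
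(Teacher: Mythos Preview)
Your proposal misreads what needs to be proved. Kesten already established \eqref{kesten1} and supplied an explicit formula for $\sigma$, reproduced at the start of Section~\ref{kestensection}; the content of Theorem~\ref{kestentheorem} is solely that this formula evaluates to $1/(3\pi)$ in both cases. You instead set out to re-derive the entire limit law via Ostrowski expansion and the paper's mixing machinery, which introduces exactly the obstacles you name at the end --- extracting a Cauchy limit with the correct scale from weakly dependent heavy-tailed summands involving the Ostrowski digits of $N$ and the random $\beta$, and then pinning down your universal constant $C$, which you never actually compute. These are genuine gaps, and entirely avoidable: the hard analytic work is already done in \cite{KE1,KE2}.

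The paper's proof is a short direct computation. Kesten's formula is $\sigma=(6/\pi)\int_0^1 F(u)\,\mathrm{d}u$ for irrational $r$ and $\sigma=(6/\pi)\sum_{a\in\mathbb{Z}_m}\nu_a F(al/m)$ for $r=l/m$, where $F(u)=\int_0^1\int_0^1|V(x,u,y)|\,\mathrm{d}x\,\mathrm{d}y$ for an explicit Fourier series $V$. Using trigonometric identities and the Fourier expansion \eqref{fourierseries} of the second Bernoulli polynomial, the paper evaluates $F(u)=\{u\}(1-\{u\})/3$ in closed form, so the irrational case $\sigma=1/(3\pi)$ is immediate. The rational case reduces, after the substitution $a\mapsto la$ (using that $\nu$ is invariant under multiplication by $\mathbb{Z}_m^*$), to exactly the identity you isolate, $\sum_{a\in\mathbb{Z}_m}\nu_a\{a/m\}(1-\{a/m\})=1/6$; the paper proves it by inclusion--exclusion over the prime divisors of $m$ rather than your multiplicativity argument, though both routes work. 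So you have correctly located the arithmetic crux and sketched a valid alternative proof of it, but wrapped it in a full re-derivation of Kesten's theorem that the task does not call for and that your sketch does not complete.
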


The function $f(x)=\{ x \} -1/2$ also leads to the same limit law \cite{KE1}: if $(\alpha, \beta)$ is a random variable uniformly distributed on the unit square, then
\begin{equation}\label{kesten2}
\frac{\sum_{n=1}^N (\{ n \alpha + \beta \} -1/2)}{\sigma' \log N} \overset{d}{\to} \mathrm{Cauchy}.
\end{equation}
In Section \ref{kestensection} we show that \eqref{kesten2} holds with $\sigma'=1/(4 \pi)$. The question whether \eqref{kesten1} and \eqref{kesten2} hold with a fixed starting point $\beta$ was already raised by Kesten, and remains open.

Remarkable random behavior of the Birkhoff sum $\sum_{n=1}^N f(n \alpha)$ with a fixed quadratic irrational $\alpha$, fixed starting point $\beta=0$ and test functions $f(x)=\mathds{1}_{[0,r]}(\{ x \}) -r$, $r \in (0,1)$ rational, and $f(x)=\{ x \} - 1/2$ was first proved by Beck \cite{BE}, and later generalized by Bromberg and Ulcigrai \cite{BU}. The special case of the former test function with $r=1/2$ is sometimes called the deterministic random walk \cite{AK,ADDS}. Higher dimensional analogues of \eqref{kesten1} are due to Dolgopyat and Fayad \cite{DF1,DF2}.

In this paper, we consider the Birkhoff sum $S_{N,r} (\alpha) = \sum_{n=1}^N \mathds{1}_{[0,r]} (\{ n \alpha \}) - rN$ with a rational $r$, that is, when the starting point is $\beta=0$, and we find the limit law of its maximum and minimum. Let $\mathrm{Stab}(1,\pm 1)$ denote the standard stable law of index $1$ and skewness parameter $\pm 1$, whose characteristic function is $\exp (-|x| (1 \pm (2i/\pi) \mathrm{sgn}(x) \log |x|))$, and let $\otimes$ denote the product of two measures.
\begin{thm}\label{discrepancytheorem} Let $\alpha \sim \mu$ with $\mu \ll \lambda$, and let $r \in (0,1)$ be a fixed rational. Then
\[ \left( \frac{\displaystyle{\max_{0 \le N<M} S_{N,r} (\alpha) -E_M}}{{\frac{1}{2 \pi} \log M}}, \frac{\displaystyle{\min_{0 \le N<M} S_{N,r} (\alpha) +E_M}}{{\frac{1}{2 \pi} \log M}} \right) \overset{d}{\to} \mathrm{Stab}(1,1) \otimes \mathrm{Stab}(1,-1) \quad \textrm{as } M \to \infty, \]
where $E_M = (1/\pi^2) \log M \log \log M - c(r) \log M$ with an explicit constant $c(r)$ depending only on $r$.
\end{thm}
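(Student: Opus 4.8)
The plan is to reduce the two extrema to an additive functional of the sequence $(a_n,P_n)$ taken with modulus $m=q$, where $r=p/q$ in lowest terms, and then to feed this functional into a stable limit theorem for $\psi$-mixing sequences. By the exponential $\psi$-mixing of $(a_n,P_n)$ the limit law does not depend on the absolutely continuous density, so it suffices to treat $\alpha\sim\mu_{\mathrm{Gauss}}$ and then transfer to general $\mu\ll\lambda$ by the usual change-of-density argument. Write each $N\in\{0,\dots,M-1\}$ in its Ostrowski expansion $N=\sum_{i\ge 0}b_{i+1}q_i$, and let $L=L(M)$ be the largest index with $q_L\le M$. Splitting the orbit $\{n\alpha:1\le n\le N\}$ along the Ostrowski blocks, $S_{N,r}(\alpha)$ becomes a sum of Birkhoff sums of $\mathds 1_{[0,r]}-r$ over blocks of length $q_k$; such a block starting at $x$ has value $f_k(x):=\#\{1\le i\le q_k:\{x+i\alpha\}\in[0,r]\}-rq_k$, a bounded mean-zero function which, since the orbit $\{i\alpha\}_{i\le q_k}$ is almost equispaced and the fractional part of $rq_k$ is governed by $q_k\bmod q$, depends on $\alpha$ essentially only through $P_k$ and the position $x$, and which, as $N$ runs through the $k$-th Ostrowski digit, traces a path whose slope drifts by at most $\pm1$. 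Organising the nested block structure and choosing the lower-order digits to move $x$ into the most favourable position while taking $b_{k+1}$ extremal (which costs only $O(1)$ per level against the Ostrowski constraints), one obtains, up to errors $o(\log M)$ in probability,
\[ \max_{0\le N<M}S_{N,r}(\alpha)=\sum_{k=1}^{L}\Phi^{+}(a_{k+1},P_k),\qquad \min_{0\le N<M}S_{N,r}(\alpha)=-\sum_{k=1}^{L}\Phi^{-}(a_{k+1},P_k), \]
where $\Phi^{\pm}:\mathbb N\times G\to[0,\infty)$ are explicit functions with $\Phi^{\pm}(a,g)=\phi^{\pm}(g)\,a+O(1)$, $\phi^{\pm}(g)\ge 0$, and $\phi^{\pm}(g)=0$ precisely when the relevant column of $g$ vanishes modulo $q$ (the degenerate case $\hat{\mathds 1}_{[0,r]}(q_k)=0$).

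With the reduction in hand, the tails enter through the partial quotients: by the Gauss--Kuzmin theorem $\mathbb P(a_{k+1}>t)\sim 1/(t\log 2)$, so for $U\sim\mathrm{Unif}(G)$ the variables $\Phi^{\pm}(a_{k+1},U)$ have regularly varying right tails of index $1$ with constants $\kappa^{\pm}=\mathbb E\,\phi^{\pm}(U)/\log 2$; an explicit computation, using that $f_k$ has mean zero, gives $\kappa^{+}=\kappa^{-}=:\kappa$. Since $(a_n,P_n)$ is $\psi$-mixing with exponential rate (established in the course of proving Theorem \ref{Gtheorem}), the sums $\sum_{k\le L}\Phi^{\pm}(a_{k+1},P_k)$ obey a stable limit theorem for weakly dependent sequences in the style of Philipp--Stout: after subtracting the truncated means $\kappa L\log L$ and dividing by $\kappa L$ each converges to $\mathrm{Stab}(1,1)$. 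The joint convergence to the product $\mathrm{Stab}(1,1)\otimes\mathrm{Stab}(1,1)$ — equivalently the asymptotic independence of the max and the min after centering — is extracted from the structure of $\Phi^{\pm}$ supplied by the reduction: the few large values of $a_{k+1}$ that carry the fluctuations of $\sum\Phi^{+}$ and those carrying the fluctuations of $\sum\Phi^{-}$ occur at asymptotically disjoint sets of indices, because the nested configurations of the path realising the running maximum and the running minimum require mutually incompatible placements of the orbit relative to the interval endpoints.

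It remains to translate from $L$ to $\log M$ and to identify the constants. By Lévy's theorem $\log q_n\sim\frac{\pi^2}{12\log 2}n$ together with the CLT/LIL for $\log q_n$, one has $L(M)=\frac{12\log 2}{\pi^2}\log M+O((\log M)^{1/2+\varepsilon})$, and the error is $o(\log M/\log\log M)$, so it disturbs neither the $(\log M)(\log\log M)$ scale nor the $\log M$ scale. Substituting this into the previous step, the normalisation $\kappa L$ becomes a constant multiple of $\log M$, which after evaluating $\mathbb E\,\phi^{\pm}(U)$ over $\mathrm{Unif}(G)$ equals $\frac{1}{2\pi}\log M$; the truncated mean $\kappa L\log L$ produces the centering, whose leading term is the stated $\frac{1}{\pi^2}(\log M)(\log\log M)$ while all the $O(L)=O(\log M)$ corrections — the gap between $\log L$ and $\log\log M$, the term $\mathbb E[\phi^{\pm}(U)\log\phi^{\pm}(U)]$, and the $O(1)$ discrepancy contributions $\Phi^{\pm}(a,g)-\phi^{\pm}(g)a$ — assemble into the explicit constant $c(r)$. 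Finally, transferring from $\mu_{\mathrm{Gauss}}$ to an arbitrary $\mu\ll\lambda$ via the exponential mixing completes the proof.

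The main obstacle is the reduction of the first paragraph: making the passage from $S_{N,r}$ to the nested sum $\sum_k\Phi^{\pm}(a_{k+1},P_k)$ rigorous, with the $o(\log M)$ error controlled uniformly in $N$ and the extremal digit choices shown to be compatible with the Ostrowski constraints at negligible cost, and then — the genuinely delicate point — verifying that the index sets carrying the large fluctuations of the two sums decouple, so that the limit is the product measure $\mathrm{Stab}(1,1)\otimes\mathrm{Stab}(1,-1)$ rather than a degenerate or otherwise correlated law. The explicit evaluation of the averages of $\phi^{\pm}$ over $\mathrm{Unif}(G)$, which pins down the normalising constant $\frac{1}{2\pi}$ and the coefficient $\frac{1}{\pi^2}$, is a separate but purely computational matter.
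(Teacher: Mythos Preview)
Your overall architecture is right --- reduce the extrema to additive functionals of $(a_n,P_n)$, invoke exponential $\psi$-mixing, and apply a stable limit theorem --- but the two places you yourself flag as obstacles are precisely where the paper's proof diverges from yours, and where your sketch does not close.

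For the reduction, the paper does not build up from the Ostrowski expansion. It quotes an explicit formula of Ro\c cadas and Schoi\ss engeier stating that, up to $O(1)$,
\[
\max_{0\le N<q_{k+1}} S_{N,r}(\alpha)=\sum_{\substack{j=0\\ j\ \textrm{even}}}^{k}\{q_j r\}\Bigl((1-\{q_j r\})\,a_{j+1}+\{q_{j+1}r\}-\{q_{j-1}r\}\Bigr),
\]
with the analogous expression for the minimum carrying a minus sign and the sum restricted to \emph{odd} $j$. This is your $\Phi^{\pm}$-reduction, but with the decisive feature that the maximum involves only the partial quotients $a_{j+1}$ with $j$ even and the minimum only those with $j$ odd. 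In your notation this forces $\phi^{+}$ to vanish on $G_{-1}$ and $\phi^{-}$ on $G_{1}$, which is not the condition ``the relevant column of $g$ vanishes modulo $q$'' that you wrote down.

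This parity separation is what delivers the product limit. In your formulation both $\sum_k\Phi^{+}$ and $\sum_k\Phi^{-}$ run over all $k\le L$, so a single large $a_{k+1}$ contributes $\phi^{+}(P_k)\,a_{k+1}$ to one sum and $\phi^{-}(P_k)\,a_{k+1}$ to the other, and the two sums are correlated unless one coefficient vanishes. Your heuristic that the large jumps ``occur at asymptotically disjoint sets of indices because the nested configurations require mutually incompatible placements'' is not an argument; the genuine mechanism is the deterministic even/odd split built into the Schoi\ss engeier formula. With that split, the paper proves the joint law by computing the joint characteristic function of the pair of sums, pairing consecutive terms as $Y_j=(x_1/k)X_{2j-1}+(x_2/k)X_{2j}$ and applying a lemma of Heinrich on stable limits for exponentially $\psi$-mixing sequences. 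This is not a Philipp--Stout invariance principle --- those are Gaussian and do not give index-$1$ stable laws --- so that citation should be replaced.

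The remaining steps --- passing from the random index $L(M)$ to a deterministic one via the CLT for $\log q_n$, discarding the bounded corrections $\{q_{j+1}r\}-\{q_{j-1}r\}$ using the a.e.\ equidistribution of $P_n$, evaluating the averages over $\mathrm{Unif}(G)$ to obtain $\frac{1}{2\pi}$ and $\frac{1}{\pi^2}$, and transferring from a Lipschitz density to general $\mu\ll\lambda$ --- proceed essentially as you describe.
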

\noindent In particular,
\[ \frac{\displaystyle{\max_{0 \le N<M} S_{N,r}(\alpha) -E_M}}{\frac{1}{2 \pi} \log M} \overset{d}{\to} \mathrm{Stab}(1,1) \qquad \textrm{and} \qquad \frac{\displaystyle{\min_{0 \le N<M} S_{N,r}(\alpha) +E_M}}{\frac{1}{2 \pi} \log M} \overset{d}{\to} \mathrm{Stab}(1,-1) . \]
The main motivation for deducing the joint limit law in Theorem \ref{discrepancytheorem} is that it immediately implies
\[ \frac{\displaystyle{\max_{0 \le N<M} |S_{N,r}(\alpha)| -E_M}}{\frac{1}{2 \pi} \log M} \overset{d}{\to} \vartheta , \]
where $\vartheta$ is the distribution of $\max \{ X,Y \}$, with $X,Y \sim \mathrm{Stab}(1,1)$ independent. Note that the cumulative distribution function of $\vartheta$ is the square of that of $\mathrm{Stab}(1,1)$.

The constant $c(r)$ in Theorem \ref{discrepancytheorem} depends only on the denominator of $r$ in its reduced form. If $r=l/m$ with some coprime integers $l$ and $m$, then $c(r) = (6 \theta_m + \gamma + \log (2 \pi) ) / \pi^2$, where $\gamma$ is the Euler--Mascheroni constant, and\footnote{Throughout the paper, we use the convention $0 \log 0 = 0$.}
\begin{equation}\label{thetam}
\theta_m = \sum_{a \in \mathbb{Z}_m} \nu_a \left\{ \frac{a}{m} \right\} \left( 1- \left\{ \frac{a}{m} \right\} \right) \log \left( \left\{ \frac{a}{m} \right\} \left( 1- \left\{ \frac{a}{m} \right\} \right) \right) .
\end{equation}

We rely on an explicit formula for the maximum and the minimum of $S_{N,r}(\alpha)$ in terms of the sequences $q_n \pmod{m}$ and $a_n$ due to Ro\c cadas and Schoissengeier \cite{SCH1}, see \eqref{schoissengeier} in Section \ref{limitlawsection}. Theorem \ref{discrepancytheorem} then follows from the fact that the sequence $(a_n,P_n)$ is $\psi$-mixing with exponential rate, as established in Lemma \ref{psilemma}. It would be interesting to see whether Theorem \ref{discrepancytheorem} holds with a fixed irrational $r$ as well. The maximum and the minimum of the Birkhoff sum $\sum_{n=1}^N (\{ n \alpha \} -1/2)$ satisfies the same limit law as in Theorem \ref{discrepancytheorem} with suitable normalizing constants, but the proof is based on the theory of quantum modular forms instead \cite{BO}.

Relation \eqref{kesten1} and Theorem \ref{discrepancytheorem} concern $|S_{N,r}(\alpha)|$, which is sometimes called the local discrepancy of the Kronecker sequence $\{ n \alpha \}$ at $r \in (0,1)$. Taking the supremum over all subintervals $I \subseteq [0,1]$ leads to the notion of discrepancy:
\[ D_N (\alpha) = \sup_{I \subseteq [0,1]} \left| \sum_{n=1}^N \mathds{1}_I (\{ n \alpha \}) - \lambda (I) N \right| . \]
Kesten \cite{KE3} proved that if $\alpha \sim \lambda$, then
\[ \begin{split} \frac{D_N (\alpha)}{\log N \log \log N} &\to \frac{2}{\pi^2} \quad \textrm{in measure, as } N \to \infty, \\ \frac{\displaystyle{\max_{0 \le N<M} D_N (\alpha)}}{\log M \log \log M} &\to \frac{3}{\pi^2} \quad \textrm{in measure, as } M \to \infty . \end{split} \]
See also \cite{SCH2}. It remains a challenging open problem to show whether
\[ \frac{D_N(\alpha) - (2/\pi^2) \log N \log \log N}{\log N} \quad \textrm{and} \quad \frac{\displaystyle{\max_{0 \le N <M} D_N(\alpha) - (3/\pi^2) \log M \log \log M}}{\log M} \]
have nondegenerate limit distributions.

\section{The group $\mathrm{SL}(2,\mathbb{Z}_m)$}\label{SL2Zmsection}

In this section, we recall some basic facts about the group $\mathrm{SL}(2,\mathbb{Z}_m)$. Fix an integer $m \ge 2$, let $G_D$ be the set of $2 \times 2$ matrices with entries in $\mathbb{Z}_m$ of determinant $D \in \mathbb{Z}_m^*$, and let $V=\{ (a,b) \in \mathbb{Z}_m \, : \, \mathrm{gcd}(a,b,m)=1 \}$, as in the Introduction. Let $\nu$ be the probability measure on $\mathbb{Z}_m$ defined in \eqref{nudef}.
\begin{lem}\label{SL2Zmlemma} Let $D \in \mathbb{Z}_m^*$. We have $|G_D| = m|V| = m^3 \prod_{p \mid m} \left( 1-\frac{1}{p^2} \right)$. If $U=\left( \begin{array}{cc} u_1 & u_2 \\ u_3 & u_4 \end{array} \right) \sim \mathrm{Unif} (G_D)$, then $(u_1,u_2), (u_3,u_4), (u_1,u_3), (u_2,u_4) \sim \mathrm{Unif} (V)$, and $u_1, u_2, u_3, u_4 \sim \nu$.
\end{lem}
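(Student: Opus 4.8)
The plan is to first handle the cardinality count, then reduce the distributional statements to a handful of group-action transitivity facts. For $|G_D|$, I would fix $D \in \mathbb{Z}_m^*$ and count pairs $(\text{first row}, \text{second row})$. Observe that the first row $(a,b)$ of a matrix in $\mathrm{GL}(2,\mathbb{Z}_m)$ can be any element of $V$: indeed $\left(\begin{smallmatrix} a & b \\ c & d\end{smallmatrix}\right)$ is invertible over $\mathbb{Z}_m$ iff its determinant is a unit, and by the Chinese Remainder Theorem and Hensel-type reasoning it suffices to check invertibility modulo each prime $p \mid m$, where $(a,b) \in V$ exactly says $(a,b) \not\equiv (0,0) \pmod p$ for every $p$, which is precisely the condition that $(a,b)$ extends to an invertible matrix. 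Then $|V| = m^2 \prod_{p \mid m}(1 - 1/p^2)$ by inclusion–exclusion over the primes dividing $m$ (for each $p$, a $1/p^2$ fraction of pairs vanishes mod $p$, and these events are independent across primes by CRT). Given a fixed admissible first row $(a,b)$, the number of second rows $(c,d)$ with $ad - bc = D$ is exactly $m$: the equation $ad - bc = D$ in the unknowns $(c,d)$ is a single nonzero linear equation over $\mathbb{Z}_m$ whose coefficient vector $(-b, a) = (a,b)$ rotated is again in $V$, hence unimodular mod every $p$, so the solution set is a coset of a rank-one submodule of size $m$. This gives $|G_D| = m|V| = m^3 \prod_{p\mid m}(1-1/p^2)$, independent of $D$.

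Next, for the row/column distributions: by the count just established, for $U \sim \mathrm{Unif}(G_D)$ the number of matrices in $G_D$ with a prescribed first row $(a,b) \in V$ is $m$, the same for every $(a,b) \in V$; hence the first row is $\mathrm{Unif}(V)$. The second row is handled identically by counting with the roles swapped (the map $\left(\begin{smallmatrix} a & b \\ c & d\end{smallmatrix}\right) \mapsto \left(\begin{smallmatrix} c & d \\ a & b\end{smallmatrix}\right)$ is a bijection $G_D \to G_{-D}$, and $|G_{-D}| = |G_D|$). For the columns, transpose: $M \mapsto M^{\mathsf T}$ is a bijection $G_D \to G_D$, turning columns into rows, so each column is $\mathrm{Unif}(V)$ as well. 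Finally, for the entries: the distribution of $u_1$ is obtained by pushing $\mathrm{Unif}(V)$ forward under the first coordinate projection $V \to \mathbb{Z}_m$. So I must show $\#\{(a,b) \in V : a \equiv a_0\} / |V| = \nu_{a_0}$. Writing $\gcd(a_0, m) = e$, the condition $(a_0, b) \in V$ means $\gcd(b, m/e')$ avoids... more carefully: $(a_0,b) \in V$ iff for every prime $p \mid m$ we have $p \nmid a_0$ or $p \nmid b$; the constraint is active only for $p \mid \gcd(a_0,m)$, where we need $p \nmid b$. By CRT the count of such $b \in \mathbb{Z}_m$ is $m \prod_{p \mid \gcd(a_0,m)}(1 - 1/p)$. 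Dividing by $|V| = m^2\prod_{p\mid m}(1-1/p^2)$ yields exactly $\nu_{a_0}$ as in \eqref{nudef}. By the column/transpose symmetry and the row-swap bijection above, the same holds for $u_2, u_3, u_4$.

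The only mild subtlety — the ``main obstacle'' if there is one — is justifying the two counting claims modulo prime powers rigorously: that $(a,b)$ extends to a determinant-$D$ matrix over $\mathbb{Z}_m$ exactly when $(a,b) \in V$, and that the fibre of the first-row map has size exactly $m$. Both reduce via CRT to the case $m = p^k$ a prime power, where one argues that $(a,b)$ is unimodular (i.e.\ $a$ or $b$ is a unit) and then, after multiplying by a unit and a permutation, may assume $a$ is a unit, so $ad - bc = D$ has, for each of the $m$ choices of $c$, a unique solution $d = a^{-1}(D + bc)$. This elementary case analysis is routine and I would present it compactly; everything else is bookkeeping with the multiplicative functions appearing in \eqref{nudef}.
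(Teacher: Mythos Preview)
Your proposal is correct and follows essentially the same route as the paper: both arguments count the fibre of the first-row map $G_D \to V$ (the paper phrases this as orbit--stabilizer for the transitive action of $G_1$ on $V$, you phrase it as solving $ad-bc=D$ for $(c,d)$), then use transposition for the columns, and finally compute the coordinate pushforward of $\mathrm{Unif}(V)$ by the same count $|\{ b : (a_0,b)\in V\}| = m\prod_{p\mid \gcd(a_0,m)}(1-1/p)$. The only cosmetic difference is that the paper treats $D=1$ first and passes to general $D$ via the coset structure, whereas you handle all $D$ at once by direct counting.
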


\begin{proof} One readily checks that the value of $\mathrm{gcd} (a,b,m)$ is invariant under multiplying the row vector $(a,b) \in \mathbb{Z}_m^2$ by a matrix in $\mathrm{GL}(2,\mathbb{Z}_m)$ from the right. In particular, the group $\mathrm{GL}(2,\mathbb{Z}_m)$ acts on the set $V$ by multiplication.

First, let $D=1$. Recall that given $a,b,e \in \mathbb{Z}_m$, the linear congruence $ax+by=e$ has a solution $x,y \in \mathbb{Z}_m$ if and only if $\mathrm{gcd}(a,b,m) \mid e$. In particular, given any $(a,b) \in V$, there exist $c,d \in \mathbb{Z}_m$ such that $ad-bc=1$. Therefore we can obtain any $(a,b) \in V$ by multiplying the row vector $(1,0) \in V$ by a suitable matrix in $G_1$, showing that $G_1$ acts transitively on $V$. It immediately follows that if $U \sim \mathrm{Unif}(G_1)$, then its two rows are uniformly distributed on $V$. Since transposition is a bijection of $G_1$, the two columns of $U$ are also uniformly distributed on $V$.

The stabilizer of the row vector $(1,0) \in V$ under the action of $G_1$ is $\left\{ \left( \begin{array}{cc} 1 & 0 \\ a & 1 \end{array} \right) \, : \, a \in \mathbb{Z}_m \right\}$, which has size $m$. Hence $|G_1| = m |V|$.

Now fix $a \in \mathbb{Z}_m$. Since $\mathrm{gcd}(a,b,m) = \mathrm{gcd}(b, \mathrm{gcd}(a,m))$, we have
\[ |\{ b \in \mathbb{Z}_m \, : \, (a,b) \in V \}| = \frac{m}{\mathrm{gcd}(a,m)} \varphi (\mathrm{gcd}(a,m)) = m \prod_{p \mid \mathrm{gcd}(a,m)} \left( 1-\frac{1}{p} \right) , \]
where $\varphi$ is the Euler totient function. Given a divisor $d \mid m$, we have $|\{ a \in \mathbb{Z}_m \, : \, \mathrm{gcd}(a,m)=d \}| = \varphi (m/d)$, hence
\[ |V| = \sum_{a \in \mathbb{Z}_m} \frac{m}{\mathrm{gcd}(a,m)} \varphi (\mathrm{gcd}(a,m)) = \sum_{d \mid m} \frac{m}{d} \varphi (d) \varphi \left( \frac{m}{d} \right) = m^2 \prod_{p \mid m} \left( 1-\frac{1}{p^2} \right) . \]
The last step can be seen using the prime factorization of $m$. Since the rows and columns of $U$ are uniformly distributed on $V$, the previous two formulas show that each entry $u_i$ of $U$ attains $a \in \mathbb{Z}_m$ with probability $\nu_a$. This finishes the proof for the case $D=1$. The claims for general $D \in \mathbb{Z}_m^*$ follow immediately from the fact that $G_D$ is a coset of $G_1$ in $\mathrm{GL}(2,\mathbb{Z}_m)$.
\end{proof}

For the rest of the paper, let $m'=\prod_{p \mid m} p$ denote the radical (greatest square-free divisor) of $m$.
\begin{lem}\label{generatinglemma} We have
\[ \left\{ \prod_{i=1}^4 \left( \begin{array}{cc} 0 & 1 \\ 1 & x_i \end{array} \right) \, : \, x_1, x_2, x_3, x_4 \in \mathbb{Z}_m \right\} =G_1. \]
More precisely, for any $g \in G_1$ there exist at least $\varphi (m')$ elements $y \in \mathbb{Z}_{m'}$ such that whenever $x_4 \in \mathbb{Z}_m$ satisfies $x_4 \equiv y \pmod{m'}$, the equation $\prod_{i=1}^4 \left( \begin{array}{cc} 0 & 1 \\ 1 & x_i \end{array} \right) =g$ has a solution in the variables $x_1, x_2, x_3 \in \mathbb{Z}_m$.
\end{lem}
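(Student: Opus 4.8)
The plan is to reduce everything to the prime power case via the Chinese Remainder Theorem, and there to exploit the structure of $\mathrm{SL}(2,\mathbb{Z}/p^k)$. Write $L(x) = \left( \begin{smallmatrix} 0 & 1 \\ 1 & x \end{smallmatrix} \right)$. First I would record the elementary identities $L(0)L(x) = \left( \begin{smallmatrix} 1 & x \\ 0 & 1 \end{smallmatrix} \right)$ and $L(x)L(0) = \left( \begin{smallmatrix} 0 & 1 \\ 1 & x \end{smallmatrix} \right)L(0) = \left( \begin{smallmatrix} 1 & 0 \\ x & 1 \end{smallmatrix} \right)$, so that products of four $L$-factors already include all matrices of the shape $\left( \begin{smallmatrix} 1 & a \\ 0 & 1 \end{smallmatrix} \right)\left( \begin{smallmatrix} 1 & 0 \\ b & 1 \end{smallmatrix} \right)$ and their mild variants. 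A cleaner route: any $g \in G_1 = \mathrm{SL}(2,\mathbb{Z}_m)$ can be written as a product of at most two upper- and lower-triangular unipotents times a diagonal unipotent once we allow enough factors; but since we are limited to \emph{exactly} four factors of a rigid form, I would instead directly parametrize $\prod_{i=1}^4 L(x_i)$ and show the resulting map $\mathbb{Z}_m^4 \to G_1$ is surjective, keeping track of how many choices of $x_4$ work.

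Concretely, compute $M(x_1,x_2,x_3,x_4) = L(x_1)L(x_2)L(x_3)L(x_4)$ as an explicit $2\times 2$ matrix whose entries are polynomials in the $x_i$ (these are the "continuant" polynomials: e.g. the bottom-right entry is $x_1x_2x_3x_4 + x_1x_2 + x_1x_4 + x_3x_4 + 1$, and similarly for the others, with determinant $1$). Fixing a target $g = \left( \begin{smallmatrix} a & b \\ c & d \end{smallmatrix} \right) \in G_1$, I want to solve the system $M(x_1,x_2,x_3,x_4) = g$. The key observation is that once $x_4$ is chosen, the remaining system in $x_1,x_2,x_3$ is (generically) solvable because three $L$-factors already surject onto a large enough subset: indeed $L(x_1)L(x_2)L(x_3) = g L(-x_4)$ and one can solve $L(x_1)L(x_2)L(x_3) = h$ for any $h$ whose, say, top-right entry $h_{12}$ satisfies $\gcd(h_{12}, m) \mid (\text{something})$ — the precise obstruction is a single $\gcd$ condition on one entry of $g L(-x_4)$, which depends on $x_4$ only modulo the radical $m' = \prod_{p\mid m} p$. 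So the set of admissible $x_4 \pmod{m'}$ is exactly $\{ y : \gcd(P(y), m') = 1\}$ for an explicit polynomial $P$ of degree $\le 3$ with the property that $P \not\equiv 0$ and its reduction mod each prime $p \mid m$ is a nonzero polynomial of degree $\le 3 < p$ in the cases that matter; for small primes ($p = 2, 3$) one checks the count by hand.

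The main step, then, is the claim that three factors suffice after one $\gcd$ condition: $\{L(x_1)L(x_2)L(x_3) : x_i \in \mathbb{Z}_m\}$ contains every $h \in G_1$ with a prescribed entry coprime to $m$ (or more precisely satisfying a divisibility condition that is automatically satisfied for $\varphi(m')$-many residues of $x_4$). I would prove this by again going to prime powers: over $\mathbb{Z}/p^k$, write $h = L(x_1)L(x_2)L(x_3)$ and solve bottom-up — choose $x_3$ to fix one column up to the unipotent ambiguity, then $x_1$ (from the top), then $x_2$ to match the middle; each solvability step is a linear congruence whose solvability is governed by a $\gcd$ that is coprime to $p$ for all but the residues killing a low-degree polynomial. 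The hard part will be bookkeeping the divisibility conditions so that the count comes out to \emph{at least} $\varphi(m')$ and not something smaller — in particular handling $p = 2$ and $p = 3$ separately, where a degree-$3$ polynomial can vanish on a large fraction of residues, and confirming that even there the admissible set has size $\ge \varphi(p) = p - 1$ after the reduction. Once the prime-power counts are in hand, the Chinese Remainder Theorem glues them: $x_4 \pmod{m'}$ is admissible iff it is admissible modulo every $p \mid m$, giving a product of counts each $\ge \varphi(p)$, hence $\ge \prod_{p \mid m}\varphi(p) = \varphi(m')$, and the surjectivity onto $G_1$ follows since at least one admissible $x_4$ exists.
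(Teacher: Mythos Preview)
Your overall plan---fix $x_4$, reduce the remaining system to a congruence whose solvability is governed by a $\gcd$ condition depending only on $x_4 \pmod{m'}$, then count via CRT---is exactly the paper's approach. But you have not carried out the elimination, and your guess about its difficulty is wrong: you anticipate an obstruction polynomial $P(x_4)$ of degree $\le 3$ and special pain at $p=2,3$. In fact the elimination is much cleaner. Writing $g=\left(\begin{smallmatrix} a & b \\ c & d \end{smallmatrix}\right)$ and equating $\prod_{i=1}^4 L(x_i)=g$ entrywise gives $x_2x_3=a-1$, then $x_2=b-ax_4$ and $x_3=c-ax_1$, and substituting (using $ad-bc=1$) collapses the whole system to the \emph{single} equation
\[
(ax_4-b)\,x_1 = cx_4 - d + 1 .
\]
This is linear in $x_1$, so it is solvable iff $ax_4-b\in\mathbb{Z}_m^*$. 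Thus the obstruction polynomial is $P(x_4)=ax_4-b$, degree $1$. For each prime $p\mid m$, the congruence $ax-b\equiv 0\pmod p$ has at most one solution (if $p\mid a$ then $p\nmid b$ since $ad-bc=1$, so there is none), hence at least $p-1=\varphi(p)$ good residues mod $p$; CRT gives $\ge\varphi(m')$ good residues mod $m'$ with no case analysis whatsoever.

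Two smaller points. First, $L(x_4)^{-1}=\left(\begin{smallmatrix} -x_4 & 1 \\ 1 & 0 \end{smallmatrix}\right)\neq L(-x_4)$, so your formula $L(x_1)L(x_2)L(x_3)=gL(-x_4)$ needs correcting; with the right inverse the top-left entry of the right side is $b-ax_4$, which is precisely the linear obstruction above. Second, your plan to characterize the image of three $L$-factors is unnecessary overhead: the direct elimination on four factors already produces the linear equation, bypassing any separate lemma about three-factor products.
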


\begin{proof} Fix $a,b,c,d \in \mathbb{Z}_m$, $ad-bc=1$, and consider the equation $\prod_{i=1}^4 \left( \begin{array}{cc} 0 & 1 \\ 1 & x_i \end{array} \right) = \left( \begin{array}{cc} a & b \\ c & d \end{array} \right)$. Multiplying the four matrices, we find that this matrix equation is equivalent to the system of four equations
\begin{align*}
1+x_2 x_3 &= a, & x_2+x_4+x_2 x_3 x_4 &= b, \\ x_1 + x_3 + x_1 x_2 x_3 &= c, & 1+x_1 x_2 + x_1 x_4 + x_3 x_4 + x_1 x_2 x_3 x_4 &= d.
\end{align*}
Substituting $x_2 x_3= a-1$ into the second and third equation, we see that $x_2 = b-ax_4$ and $x_3 = c-ax_1$. We are left with two variables $x_1, x_4$, and using the assumption $ad-bc=1$, the system turns out to be equivalent to the single equation
\[ a x_1 x_4 - b x_1 - c x_4 +d-1=0 . \]
It will thus be enough to prove that there exist at least $\varphi (m')$ elements $y \in \mathbb{Z}_{m'}$ such that whenever $x_4 \in \mathbb{Z}_m$ satisfies $x_4 \equiv y \pmod{m'}$, then $ax_4-b \in \mathbb{Z}_m^*$. Indeed, then the remaining linear congruence $(ax_4 -b) x_1 = cx_4 -d+1$ has a solution in the variable $x_1 \in \mathbb{Z}_m$.

Given any $p \mid m$, the congruence $ax-b \equiv 0 \pmod{p}$ has at most one solution in the variable $x \in \mathbb{Z}_p$. Indeed, if $p \nmid a$, then this follows from the fact that $\mathbb{Z}_p$ is a field. If $p \mid a$, then the assumption $ad-bc=1$ implies that $p\nmid b$, and there are no solutions. By the Chinese remainder theorem, there are at least $\prod_{p \mid m} (p-1) = \varphi (m')$ elements $y \in \mathbb{Z}_{m'}$ such that $ay-b \not\equiv 0 \pmod{p}$ for all $p \mid m$. Whenever $x_4 \in \mathbb{Z}_m$ satisfies $x_4 \equiv y \pmod{m'}$, we have $ax_4-b \not\equiv 0 \pmod{p}$ for all $p \mid m$, and in particular, $ax_4-b \in \mathbb{Z}_m^*$.
\end{proof}

\begin{remark} It follows that
\[ \left\{ \prod_{i=1}^5 \left( \begin{array}{cc} 0 & 1 \\ 1 & x_i \end{array} \right) \, : \, x_1, x_2, x_3, x_4, x_5 \in \mathbb{Z}_m \right\} =G_{-1}. \]
In fact, we could even prescribe the value of $x_5$. The number of factors 4 resp.\ 5 needed to generate $G_1$ resp.\ $G_{-1}$ is sharp: it is easy to check that whenever $m>2$,
\[ \left\{ \prod_{i=1}^2 \left( \begin{array}{cc} 0 & 1 \\ 1 & x_i \end{array} \right) \, : \, x_1, x_2 \in \mathbb{Z}_m \right\} \neq G_1 \quad \textrm{and} \quad \left\{ \prod_{i=1}^3 \left( \begin{array}{cc} 0 & 1 \\ 1 & x_i \end{array} \right) \, : \, x_1, x_2, x_3 \in \mathbb{Z}_m \right\} \neq G_{-1} . \]
For the sake of completeness, we mention that in the case $m=2$ the group $G_1=G_{-1}$ can be generated using $3$ factors.
\end{remark}

\section{The normalizing constants in Kesten's limit laws}\label{kestensection}

Kesten \cite[Section 4]{KE2} gave the following explicit formula for the value of $\sigma$ in \eqref{kesten1}. Let
\[ V(x,u,y) = \frac{2}{\pi^2} \sum_{k=1}^{\infty} \frac{\sin (2 \pi k x) \sin (\pi k u) \cos (2 \pi k y)}{k^2} , \]
and let $F(u) = \int_0^1 \int_0^1 |V(x,u,y)| \, \mathrm{d}x \, \mathrm{d}y$. Then
\[ \sigma = \left\{ \begin{array}{ll} (6/\pi) \sum_{a \in \mathbb{Z}_m} \nu_a F (al/m) & \textrm{if $r=l/m$ with some coprime integers $l$ and $m$}, \\ (6/\pi) \int_0^1 F(u) \, \mathrm{d} u & \textrm{if $r$ is irrational}. \end{array} \right. \]
Kesten even gave the hint to use the classical Fourier series
\begin{equation}\label{fourierseries}
\sum_{k=1}^{\infty} \frac{\cos (2 \pi k x)}{2 \pi^2 k^2} = \frac{1}{2} \{ x \}^2 - \frac{1}{2} \{ x \} + \frac{1}{12}
\end{equation}
in order to compute $F(u)$, although he did not follow through on his own advice. This is exactly what we shall do.

\begin{proof}[Proof of Theorem \ref{kestentheorem}] Trigonometric identities show that
\[ \begin{split} &4 \sin (2 \pi k x) \sin (\pi k u) \cos (2 \pi k y) = \\ &\cos (2 \pi k (x+y-u/2)) + \cos (2 \pi k (x-y-u/2)) - \cos (2 \pi k (x+y+u/2)) - \cos (2 \pi k (x-y+u/2)) . \end{split} \]
Letting $B(x) = \{ x \}^2 /2 - \{ x \} /2 + 1/12$ denote the second Bernoulli polynomial, \eqref{fourierseries} thus gives
\[ V(x,u,y) = B(x+y-u/2) + B(x-y-u/2) - B(x+y+u/2) - B(x-y+u/2) . \]
Using the fact that $(x,y) \mapsto (x+y,x-y)$ is a measure preserving map of $\mathbb{R}^2 / \mathbb{Z}^2$, we obtain
\[ \begin{split} F(u) &= \int_0^1 \int_0^1 |B(x-u/2) + B(y-u/2) - B(x+u/2) - B(y+u/2)| \, \mathrm{d} x \, \mathrm{d} y \\ &= \int_0^1 \int_0^1 |B(x) + B(y) - B(x+u) - B(y+u)| \, \mathrm{d} x \, \mathrm{d} y . \end{split} \]
In particular, $F(u)$ is $1$-periodic. Assuming now $x,y,u \in [0,1)$, we have
\[ B(x)-B(x+u) = \frac{1}{2}(\{ x \} - \{ x+u \})(\{ x \} + \{ x+u \} -1) = \left\{ \begin{array}{ll} - \frac{u}{2} (2x+u-1) & \textrm{if } 0 \le x < 1-u, \\ \frac{1-u}{2} (2x+u-2) & \textrm{if } 1-u \le x < 1 , \end{array} \right. \]
and a similar formula holds for $B(y)-B(y+u)$. Elementary calculations yield
\[ \begin{split} \int_0^{1-u} \int_0^{1-u} |B(x) + B(y) - B(x+u) - B(y+u)| \, \mathrm{d} x \, \mathrm{d} y &= u \int_0^{1-u} \int_0^{1-u} |x+y+u-1| \, \mathrm{d} x \, \mathrm{d} y \\ &= \frac{u(1-u)^3}{3}, \end{split} \]
and
\[ \begin{split} \int_{1-u}^1 \int_{1-u}^1 |B(x) + B(y) - B(x+u) - B(y+u)| \, \mathrm{d} x \, \mathrm{d} y &= (1-u) \int_{1-u}^1 \int_{1-u}^1 |x+y+u-2| \, \mathrm{d} x \, \mathrm{d} y \\ &= \frac{u^3(1-u)}{3}. \end{split} \]
Further,
\[ \begin{split} \int_0^{1-u} \int_{1-u}^1 |B(x) + B(y) - B(x+u) - B(y+u)| \, \mathrm{d} x \, \mathrm{d} y &=  \int_0^{1-u} \int_{1-u}^1 |(1-u)x -uy-(1-u)^2| \, \mathrm{d}x \, \mathrm{d}y \\ &=\frac{u^2 (1-u)^2}{3}, \end{split} \]
and by symmetry the integral on $[0,1-u] \times [1-u,1]$ is the same. Therefore
\[ F(u) = \frac{u(1-u)^3}{3} + \frac{u^3(1-u)}{3} + \frac{2 u^2 (1-u)^2}{3} = \frac{u(1-u)}{3}, \quad u \in [0,1), \]
and by periodicity, $F(u)=\{ u \} (1-\{ u\})/3$, $u \in \mathbb{R}$. For irrational $r \in (0,1)$ we thus have
\[ \sigma = \frac{6}{\pi} \int_0^1 \frac{u(1-u)}{3} \, \mathrm{d} u = \frac{1}{3 \pi}, \]
as claimed.

Now let $r=l/m$ with some coprime integers $l$ and $m$. Since $\nu_a$ is invariant under multiplication by $l \in \mathbb{Z}_m^*$, we have
\[ \sigma = \frac{2}{\pi} \sum_{a \in \mathbb{Z}_m} \nu_a \left\{ \frac{a}{m} \right\} \left( 1- \left\{ \frac{a}{m} \right\} \right) . \]
It remains to show that $\sigma$ does not depend on the denominator $m$ either.

Let $\mathcal{P}$ denote the set of prime divisors of $m$. For any $I \subseteq \mathcal{P}$, let
\[ w(I)=\prod_{p \in I} \left( 1 - \frac{1}{p} \right) = \sum_{J \subseteq I} \frac{(-1)^{|J|}}{\prod_{p \in J} p}, \quad \textrm{and} \quad s(I)=\sum_{\substack{a \in \mathbb{Z}_m \\ \prod_{p \in I} p \mid a}} \left\{ \frac{a}{m} \right\} \left( 1- \left\{ \frac{a}{m} \right\} \right) . \]
In particular, $\nu_a = w(I_a)/(m \prod_{p \mid m} (1-1/p^2))$ with $I_a=\{ p \in \mathcal{P} \, : \, p \mid a \}$. We now apply an inclusion-exclusion type argument. Observe that given any pair of sets $J \subseteq K \subseteq \mathcal{P}$, we have
\[ \sum_{J \subseteq I \subseteq K} (-1)^{|I \backslash J|} = \sum_{M \subseteq K \backslash J} (-1)^{|M|} = \left\{ \begin{array}{ll} 1 & \textrm{if } J=K, \\ 0 & \textrm{if } J \neq K. \end{array} \right. \]
Consequently for any $K \subseteq \mathcal{P}$,
\[ \sum_{J \subseteq K} w(J) \sum_{J \subseteq I \subseteq K} (-1)^{|I \backslash J|} = w(K) . \]
Applying the previous formula to $K=I_a$ and changing the order of summations lead to
\begin{equation}\label{inclusionexclusion}
\begin{split} \sum_{a \in \mathbb{Z}_m} \nu_a \left\{ \frac{a}{m} \right\} &\left( 1- \left\{ \frac{a}{m} \right\} \right) \\ &= \frac{1}{m \prod_{p \mid m} \left( 1-\frac{1}{p^2} \right)} \sum_{a \in \mathbb{Z}_m} w(I_a) \left\{ \frac{a}{m} \right\} \left( 1- \left\{ \frac{a}{m} \right\} \right) \\ &= \frac{1}{m \prod_{p \mid m} \left( 1-\frac{1}{p^2} \right)} \sum_{a \in \mathbb{Z}_m} \sum_{J \subseteq I_a} \sum_{J \subseteq I \subseteq I_a} (-1)^{|I \backslash J|} w(J) \left\{ \frac{a}{m} \right\} \left( 1- \left\{ \frac{a}{m} \right\} \right) \\ &= \frac{1}{m \prod_{p \mid m} \left( 1-\frac{1}{p^2} \right)} \sum_{I \subseteq \mathcal{P}} \sum_{\substack{a \in \mathbb{Z}_m \\ I \subseteq I_a}} \left\{ \frac{a}{m} \right\} \left( 1- \left\{ \frac{a}{m} \right\} \right) \sum_{J \subseteq I} (-1)^{|I \backslash J|} w(J) \\ &= \frac{1}{m \prod_{p \mid m} \left( 1-\frac{1}{p^2} \right)} \sum_{I \subseteq \mathcal{P}} s(I) \sum_{J \subseteq I} (-1)^{|I \backslash J|} w(J) . \end{split}
\end{equation}
Observe that given any divisor $d \mid m$,
\[ \sum_{\substack{a \in \mathbb{Z}_m \\ d \mid a}} \left\{ \frac{a}{m} \right\} \left( 1- \left\{ \frac{a}{m} \right\} \right) = \sum_{k=1}^{m/d-1} \frac{kd}{m} \left( 1-\frac{kd}{m} \right) = \frac{1}{6} \left( \frac{m}{d} - \frac{d}{m} \right) . \]
In particular,
\[ s(I)= \frac{1}{6} \left( \frac{m}{\prod_{p \in I}p} - \frac{\prod_{p \in I} p}{m} \right) . \]
Another application of inclusion-exclusion shows that
\[ \sum_{J \subseteq I} (-1)^{|I \backslash J|} w(J) = \frac{(-1)^{|I|}}{\prod_{p \in I} p} . \]
The previous two formulas simplify \eqref{inclusionexclusion} to
\begin{equation}\label{sumovera}
\begin{split} \sum_{a \in \mathbb{Z}_m} \nu_a \left\{ \frac{a}{m} \right\} \left( 1- \left\{ \frac{a}{m} \right\} \right) &= \frac{1}{m \prod_{p \mid m} \left( 1-\frac{1}{p^2} \right)} \sum_{I \subseteq \mathcal{P}} \frac{1}{6} \left( \frac{m}{\prod_{p \in I}p} - \frac{\prod_{p \in I} p}{m} \right) \frac{(-1)^{|I|}}{\prod_{p \in I} p} \\ &= \frac{1}{6 \prod_{p \mid m} \left( 1-\frac{1}{p^2} \right)} \sum_{I \subseteq \mathcal{P}} \frac{(-1)^{|I|}}{\prod_{p \in I} p^2} = \frac{1}{6}. \end{split}
\end{equation}
Note that we used $\sum_{I \subseteq \mathcal{P}} (-1)^{|I|}=0$. Therefore $\sigma=1/(3 \pi)$ for rational $r$ as well, as claimed.
\end{proof}

The explicit formula for $\sigma'$ in \eqref{kesten2} is \cite{KE1}
\[ \sigma' = \frac{6}{\pi^3} \int_0^1 \int_0^1 \left| \sum_{k=1}^{\infty} \frac{\sin (2 \pi k x) \sin (2 \pi k y)}{k^2} \right| \, \mathrm{d} x \, \mathrm{d} y. \]
A trigonometric identity and the Fourier series \eqref{fourierseries} lead to
\[ \sum_{k=1}^{\infty} \frac{\sin (2 \pi k x) \sin (2 \pi k y)}{k^2} = \sum_{k=1}^{\infty} \frac{\cos (2 \pi k (x-y)) - \cos (2 \pi k (x+y))}{2k^2} = \pi^2 (B(x-y)-B(x+y)) , \]
hence
\[ \begin{split} \sigma' &= \frac{6}{\pi} \int_0^1 \int_0^1 |B(x-y)-B(x+y)| \, \mathrm{d}x \, \mathrm{d}y = \frac{6}{\pi} \int_0^1 \int_0^1 |B(x)-B(y)| \, \mathrm{d}x \, \mathrm{d}y \\ &= \frac{3}{\pi} \int_0^1 \int_0^1 |x^2-x-y^2+y| \, \mathrm{d}x \, \mathrm{d}y . \end{split} \]
The two diagonals partition the unit square into four right triangles. The sign of $x^2-x-y^2+y=(x-y)(x+y-1)$ is constant on these triangles, making them convenient domains for integrating the function $|x^2-x-y^2+y|$. Elementary calculations then yield $\sigma'=1/(4 \pi)$, as claimed.

\section{The sequence $P_n$}\label{Pnsection}

\subsection{Preliminaries}

Let $\mathcal{B}$ denote the family of Borel subsets of $[0,1]$. Writing each $\alpha \in [0,1]$ in continued fraction form $\alpha = [0;a_1,a_2,\ldots]$, the partial quotients $a_k$ are thus measurable functions on $([0,1], \mathcal{B})$. Let $\mathcal{A}_k^{\ell}$ denote the $\sigma$-algebra generated by $a_i$, $k \le i \le \ell$, and similarly let $\mathcal{A}_k^{\infty}$ be the $\sigma$-algebra generated by $a_i$, $i \ge k$. By convention, $\mathcal{A}_1^k$ with $k=0$ is the trivial $\sigma$-algebra. Let $T: [0,1) \to [0,1)$, $Tx=\{ 1/x \}$ if $x \neq 0$, $T0=0$ denote the Gauss map. Note that $\mathcal{A}_{k+1}^{\infty} = \{ T^{-k} B \, : \, B \in \mathcal{B} \}$, where $T^{-k}$ denotes the preimage with respect to the $k$th iterate of $T$.

Given a Borel probability measure $\mu$ on $[0,1]$, the partial quotients $a_k$ become random variables, and we interpret, say, $\mu ( a_k =5 ) = \mu (\{ \alpha \in [0,1] \, : \, a_k =5 \})$ as the probability of the event $a_k=5$. We also use the conditional probability notation $\mu (B \mid A) = \mu (A \cap B) / \mu (A)$. Let
\[ P_k = \prod_{i=1}^k \left( \begin{array}{cc} 0 & 1 \\ 1 & a_i \end{array} \right) \pmod{m} \qquad \textrm{and} \qquad P_{k,\ell} = \prod_{i=k}^{\ell} \left( \begin{array}{cc} 0 & 1 \\ 1 & a_i \end{array} \right) \pmod{m} , \]
with the convention that $P_{k+1,k}$ is the identity element of the group $G_1$. Let $p_k/q_k = [0;a_1,a_2,\ldots, a_k]$ be the convergents. Recall the identity $q_{k-1}/q_k=[0;a_k, a_{k-1}, \ldots, a_1]$.

We start with two preparatory lemmas.
\begin{lem}\label{lebesguelemma1} For any $A,A' \in \mathcal{A}_1^k$ and $B \in \mathcal{A}_{k+1}^{\infty}$ such that $\lambda (A), \lambda (A'), \lambda (B)>0$, we have
\[ \frac{1}{2} \le \frac{\lambda (B \mid A)}{\lambda (B \mid A')} \le 2. \]
In particular, $\frac{1}{2} \lambda (A) \lambda (B) \le \lambda (A \cap B) \le 2 \lambda (A) \lambda (B)$.
\end{lem}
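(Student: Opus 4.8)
The plan is to reduce the statement to a bounded-distortion estimate for the cylinders of the continued fraction expansion. The atoms of $\mathcal{A}_1^k$ are the rank-$k$ cylinders $C=\{a_1=b_1,\ldots,a_k=b_k\}$, $(b_1,\ldots,b_k)\in\mathbb{N}^k$, and every $A\in\mathcal{A}_1^k$ is, up to a $\lambda$-null set, a countable disjoint union of such cylinders. On a single cylinder $C$, with $p_j/q_j$ the convergents of $[0;b_1,\ldots,b_k]$, the map $T^k$ restricts to a bijection of $C$ onto $(0,1)$ whose inverse is the M\"obius map $y\mapsto(p_k+p_{k-1}y)/(q_k+q_{k-1}y)$, with Jacobian of absolute value $(q_k+q_{k-1}y)^{-2}$. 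Since $B\in\mathcal{A}_{k+1}^{\infty}$ has the form $B=T^{-k}E$ for some Borel $E\subseteq[0,1]$, a change of variables gives
\[ \lambda(B\mid C)=\frac{\int_E (q_k+q_{k-1}y)^{-2}\,\mathrm{d}y}{\int_0^1 (q_k+q_{k-1}y)^{-2}\,\mathrm{d}y}=\int_E \rho_C(y)\,\mathrm{d}y, \qquad \rho_C(y)=\frac{1+t_C}{(1+t_Cy)^2}, \]
where $t_C=q_{k-1}/q_k\in(0,1]$ and $\rho_C$ is a probability density on $[0,1]$.

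Next I would average over the cylinders contained in $A$ with weights $\lambda(C)/\lambda(A)$, which gives $\lambda(B\mid A)=\int_E\rho_A$, where $\rho_A$ is the corresponding convex combination of the $\rho_C$; hence $m(y)\le\rho_A(y)\le M(y)$ pointwise, with $m(y)=\inf_{t\in[0,1]}(1+t)/(1+ty)^2$ and $M(y)=\sup_{t\in[0,1]}(1+t)/(1+ty)^2$. The same bounds hold for $\rho_{A'}$, so $\lambda(B\mid A)/\lambda(B\mid A')$ lies between $\int_E m/\int_E M$ and $\int_E M/\int_E m$. Thus the entire lemma reduces to the elementary inequality $M(y)\le 2m(y)$ for all $y\in[0,1]$; the ``in particular'' assertion is then the special case $A'=[0,1]$, where $\lambda(B\mid A')=\lambda(B)$.

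To prove $M(y)\le 2m(y)$ I would fix $y$ and analyze $f_y(t)=(1+t)/(1+ty)^2$ on $[0,1]$. Differentiating, the sign of $f_y'(t)$ is that of $1-2y-yt$, which is monotone in $t$, so $f_y$ is either monotone or increasing-then-decreasing; its extrema occur at $t\in\{0,1\}$ and, when $1/3<y\le1/2$, at the interior critical point $t^\ast=(1-2y)/y$, where $f_y(t^\ast)=1/(4y(1-y))$. Splitting into the ranges $y\in[0,1/3]$, $y\in(1/3,1/2]$, $y\in(1/2,1]$ and comparing $f_y(0)=1$, $f_y(1)=2/(1+y)^2$ and (where relevant) $f_y(t^\ast)$, one checks $M(y)/m(y)\le 2$ in each range, with equality only in the limits $y\in\{0,1\}$.

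The one genuinely delicate point is this last inequality with the sharp constant $2$: the crude uniform bound $1/(1+t_C)\le\rho_C(y)\le 1+t_C$, i.e.\ $\rho_C(y)\in[1/2,2]$, only produces the factor $4$, so the sharp factor $2$ forces the case analysis of the oscillation of $f_y(t)$ for each fixed $y$, rather than a bound on $\rho_C$ that is uniform in $y$. Everything else---the change of variables, the averaging over cylinders, and the reduction of the ratios---is routine.
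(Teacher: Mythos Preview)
Your proof is correct. Both your argument and the paper's are bounded-distortion estimates for the Gauss map, but the reductions differ in a useful way. The paper decomposes \emph{both} $A$ and $B$ into basic cylinders and, for a future cylinder with endpoint parameters $r,r^*\ge 1$, shows that $g(x)=(1+x)/((r+x)(r^*+x))$ has oscillation ratio at most $2$ over $x=q_{k-1}/q_k\in[0,1]$, via a four-case analysis on the value of $(r-1)(r^*-1)$. You instead keep $B=T^{-k}E$ general and pass to the conditional density $\rho_C(y)=(1+t_C)/(1+t_Cy)^2$, which reduces the problem to a \emph{one}-parameter family $f_y(t)$; your three-range analysis on $y$ is a bit lighter than the paper's four cases, and you avoid the final passage from future cylinders to general $B\in\mathcal{A}_{k+1}^{\infty}$ by $\sigma$-additivity. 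The two computations are dual: your $f_y$ is, up to a harmless factor, the paper's $g$ in the degenerate case $r=r^*=1/y$, which is why the sharp constant $2$ and the extremal cases $y\in\{0,1\}$ line up with the paper's Cases~1 and~4.
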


\begin{proof} Let $b_1, b_2, \ldots, b_{k+n} \in \mathbb{N}$ be arbitrary, and let $p_{\ell}/q_{\ell}=[0;b_1,\ldots, b_{\ell}]$, $\ell =k-1,k$. The set $\{ a_1=b_1, a_2=b_2, \ldots, a_{k+n}=b_{k+n} \}$ is an interval with endpoints $\frac{p_k r + p_{k-1}}{q_k r +q_{k-1}}$ and $\frac{p_k r^* + p_{k-1}}{q_k r^* +q_{k-1}}$, where $r=[b_{k+1};b_{k+2}, \ldots, b_{k+n}]$ and $r^*=[b_{k+1};b_{k+2}, \ldots, b_{k+n}+1]$. Using the identity $q_k p_{k-1} - q_{k-1} p_k = (-1)^k$, the Lebesgue measure of this interval simplifies to
\[ \lambda (a_1=b_1, a_2=b_2, \ldots, a_{k+n}=b_{k+n}) = \frac{|r-r^*|}{(q_k r + q_{k-1})(q_k r^* + q_{k-1})} . \]
The set $\{ a_1=b_1, a_2=b_2, \ldots, a_k = b_k \}$ is also an interval, with endpoints $\frac{p_k+p_{k-1}}{q_k + q_{k-1}}$ and $\frac{p_k}{q_k}$, and has Lebesgue measure
\[ \lambda (a_1=b_1, a_2=b_2, \ldots, a_k = b_k) = \frac{1}{q_k (q_k + q_{k-1})} . \]
Hence
\[ \lambda (a_{k+1}=b_{k+1}, \ldots, a_{k+n}=b_{k+n} \mid a_1 = b_1, \ldots, a_k =b_k) = |r-r^*| \frac{1+\frac{q_{k-1}}{q_k}}{\left( r+\frac{q_{k-1}}{q_k} \right) \left( r^*+\frac{q_{k-1}}{q_k} \right)} . \]
Here $r,r^*$ do not depend on $b_1,\ldots, b_k$. Therefore for any $b_1', \ldots, b_k' \in \mathbb{N}$, we have
\[ \frac{\lambda (a_{k+1}=b_{k+1}, \ldots, a_{k+n}=b_{k+n} \mid a_1 = b_1, \ldots, a_k =b_k)}{\lambda (a_{k+1}=b_{k+1}, \ldots, a_{k+n}=b_{k+n} \mid a_1 = b_1', \ldots, a_k =b_k')} = \frac{\frac{1+\frac{q_{k-1}}{q_k}}{\left( r+\frac{q_{k-1}}{q_k} \right) \left( r^*+\frac{q_{k-1}}{q_k} \right)}}{\frac{1+\frac{q_{k-1}'}{q_k'}}{\left( r+\frac{q_{k-1}'}{q_k'} \right) \left( r^*+\frac{q_{k-1}'}{q_k'} \right)}} , \]
where $p_{\ell}'/q_{\ell}' = [0;b_1', \ldots, b_{\ell}']$, $\ell =k-1,k$.

We claim that the right-hand side of the previous formula lies in the interval $[1/2,2]$. Letting
\[ g(x) = \frac{1+x}{\left( r+x \right) \left( r^*+x \right)}, \qquad x \in [0,1] , \]
it will be enough to show that $g(x)/g(y) \le 2$ for all $x,y \in [0,1]$. Note that $r,r^* \ge 1$, and that $g'(x) \ge 0 \Leftrightarrow -1-\sqrt{(r-1)(r^*-1)} \le x \le -1+\sqrt{(r-1)(r^*-1)}$.\\

\noindent\textbf{Case 1.} Assume that $(r-1)(r^*-1) \le 1$. Then $g(x)$ is decreasing on $[0,1]$, hence
\[ \frac{g(x)}{g(y)} \le \frac{g(0)}{g(1)} = \frac{(r+1)(r^*+1)}{2 r r^*} \le 2 . \]

\noindent\textbf{Case 2.} Assume that $1 < (r-1)(r^*-1) \le 2$. Then $g(x)$ attains its minimum at $x=1$ and its maximum at $x=-1+\sqrt{(r-1)(r^*-1)}$. Hence
\[ \begin{split} \frac{g(x)}{g(y)} &\le \frac{g(-1+\sqrt{(r-1)(r^*-1)})}{g(1)} = \frac{(r+1)(r^*+1)}{2 \left( \sqrt{r-1} + \sqrt{r^*-1} \right)^2} \\ &= \frac{(r-1)(r^*-1) + 2r + 2 r^*}{2(r-1+2\sqrt{(r-1)(r^*-1)} +r^*-1)} \le \frac{2+2r+2r^*}{2 r+2 r^*}  \le 2. \end{split} \]

\noindent\textbf{Case 3.} Assume that $2<(r-1)(r^*-1) \le 4$. Then $g(x)$ attains its minimum at $x=0$ and its maximum at $x=-1+\sqrt{(r-1)(r^*-1)}$. Hence
\[ \begin{split} \frac{g(x)}{g(y)} &\le \frac{g(-1+\sqrt{(r-1)(r^*-1)})}{g(0)} = \frac{rr^*}{\left( \sqrt{r-1} + \sqrt{r^*-1} \right)^2} \\ &= \frac{(r-1)(r^*-1) + r+r^*-1}{r-1+2\sqrt{(r-1)(r^*-1)}+r^*-1} \le \frac{r+r^*+3}{r+r^*+2\sqrt{2}-2} \le 2 . \end{split} \]

\noindent\textbf{Case 4.} Assume that $(r-1)(r^*-1)>4$. Then $g(x)$ is increasing on $[0,1]$, hence
\[ \frac{g(x)}{g(y)} \le \frac{g(1)}{g(0)} = \frac{2rr^*}{(r+1)(r^*+1)} \le 2. \]
This finishes the proof of $g(x)/g(y) \le 2$ for all $x,y \in [0,1]$. In particular,
\[ \frac{1}{2} \le  \frac{\lambda (a_{k+1}=b_{k+1}, \ldots, a_{k+n}=b_{k+n} \mid a_1 = b_1, \ldots, a_k =b_k)}{\lambda (a_{k+1}=b_{k+1}, \ldots, a_{k+n}=b_{k+n} \mid a_1 = b_1', \ldots, a_k =b_k')} \le 2 . \]
By $\sigma$-additivity, for all $A,A' \in \mathcal{A}_1^k$ and all $B \in \mathcal{A}_{k+1}^{k+n}$ of positive Lebesgue measure,
\[ \frac{1}{2} \le \frac{\lambda(B \mid A)}{\lambda (B \mid A')} \le 2. \]
Since $\mathcal{A}_{k+1}^{k+n}$, $n \in \mathbb{N}$ generate $\mathcal{A}_{k+1}^{\infty}$, the same holds for all $B \in \mathcal{A}_{k+1}^{\infty}$ of positive Lebesgue measure.
\end{proof}

\begin{lem}\label{lebesguelemma2} \hspace{1mm}
\begin{enumerate}
\item[(i)] For any $0 \le n \le 3$ and $g \in G_{(-1)^n}$, we have either $\lambda (P_n=g)=0$ or $\lambda (P_n=g) \ge 1/(m+1)^6$.

\item[(ii)] For any $g \in G_1$, we have
\[ \lambda (P_4 =g) \ge \frac{\pi^6}{216} \cdot \frac{\varphi (m')}{(m+1)^6 (m'+1)(m'-\varphi (m')+1)} . \]

\item[(iii)] For any $n \ge 5$ and $g \in G_{(-1)^n}$, we have
\[ \lambda (P_n=g) \ge \frac{\pi^6}{432} \cdot \frac{\varphi (m')}{(m+1)^6 (m'+1)(m'-\varphi (m')+1)} . \]
\end{enumerate}
\end{lem}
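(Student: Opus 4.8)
The plan is to estimate the Lebesgue measure $\lambda(P_n = g)$ from below by exhibiting an explicit family of cylinder sets $\{a_1 = b_1, \dots, a_k = b_k\}$ on which $P_n$ equals $g$, and then summing their measures with the help of the two-sided bounds from Lemma \ref{lebesguelemma1}. The main tool for producing such cylinders is Lemma \ref{generatinglemma}: for $n=4$ it says that every $g \in G_1$ can be written as $\prod_{i=1}^4 \left( \begin{array}{cc} 0 & 1 \\ 1 & x_i \end{array} \right)$ with the last coordinate $x_4$ ranging over a residue class mod $m'$ that is admissible for at least $\varphi(m')$ values of $y \in \mathbb{Z}_{m'}$. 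Since $P_n$ depends on $a_1, \dots, a_n$ only through their residues mod $m$, and since every residue class mod $m$ contains positive integers, each admissible residue tuple $(x_1, x_2, x_3, x_4) \in \mathbb{Z}_m^4$ for $g$ gives a cylinder $\{a_1 \equiv x_1, \dots, a_4 \equiv x_4 \pmod m\}$ on which $P_4 = g$.

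For part (i), with $0 \le n \le 3$, I would argue directly: if $\lambda(P_n = g) > 0$ then some cylinder $\{a_1 = b_1, \dots, a_n = b_n\}$ with $b_i \le m$ lies inside $\{P_n = g\}$, and such a cylinder has Lebesgue measure at least $1/(q_n(q_n+q_{n-1}))$ by the explicit formula in the proof of Lemma \ref{lebesguelemma1}; bounding $q_n, q_{n-1}$ crudely by a product of the $b_i + 1 \le m+1$ (using $q_k \le (b_1+1)\cdots(b_k+1)$ or a similar estimate) gives the claimed $1/(m+1)^6$. For part (ii), I would fix $g \in G_1$, invoke Lemma \ref{generatinglemma} to get the $\varphi(m')$ admissible residues $y \in \mathbb{Z}_{m'}$ for $x_4$, and for each such $y$ pick (via the Chinese remainder theorem) a representative $x_4 \in \{1, \dots, m\}$ with $x_4 \equiv y \pmod{m'}$, together with solutions $x_1, x_2, x_3 \in \{1, \dots, m\}$ of the matrix equation. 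This yields at least $\varphi(m')$ disjoint cylinders $\{a_1 = x_1, \dots, a_4 = x_4\}$ inside $\{P_4 = g\}$; summing their Lebesgue measures and using that each is at least $\frac{1}{2} \lambda(a_1 = x_1, a_2 = x_2, a_3 = x_3) \lambda(a_4 = x_4 \mid \cdots)$ — or more simply bounding each cylinder measure from below via the formula $\frac{1}{q_4(q_4+q_3)}$ with $q_4 \le \prod (x_i+1) \le (m+1)^3(m'+1)$ and then accounting for the freedom in $x_4$ over a residue class to pick it as small as possible — produces the stated bound. The factors $(m'+1)$ and $(m' - \varphi(m') + 1)$ and the constant $\pi^6/216 = \prod_{k\ge 1}(1 - k^{-2})^{-1}\cdot(\text{something})$, more precisely a tail of $\sum 1/j^2$, will come out of estimating $\sum_{x_4} 1/(x_4 + c)^2$ over the admissible residues.

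For part (iii), with $n \ge 5$: write $P_n = P_4 \cdot P_{5,n}$. Using the Chapman–Kolmogorov-type decomposition $\lambda(P_n = g) = \sum_{h} \lambda(P_4 = h,\ P_{5,n} = h^{-1}g)$ together with the near-independence bound of Lemma \ref{lebesguelemma1} (which gives $\lambda(A \cap B) \ge \frac{1}{2}\lambda(A)\lambda(B)$ for $A \in \mathcal{A}_1^4$, $B \in \mathcal{A}_5^\infty$), it suffices to pick one convenient $h$: take $h \in G_1$ with $\lambda(P_4 = h)$ bounded below by part (ii), and then $h^{-1}g \in G_{(-1)^n}$, so $\lambda(P_{5,n} = h^{-1}g)$ — which by stationarity of the $a_i$ under $\lambda$... no, $\lambda$ is not Gauss-invariant, so instead I would use that $P_{5,n}$ has the same distribution structure as $P_{n-4}$ under an auxiliary measure equivalent to $\lambda$, or more cleanly, apply Lemma \ref{generatinglemma} again to realize $h^{-1}g$ as a product of $4$ (if $n-4 \ge 4$, which holds iff $n \ge 8$) or $5$ factors when $n - 4$ is odd. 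The cleanest route is: for $n \ge 5$, factor out the first $4$ indices to land in $G_1$ and the remaining $n - 4 \ge 1$ indices into $G_{(-1)^{n-4}} = G_{(-1)^n}$, and for the remaining block, since $n - 4$ may be as small as $1$, I instead split as $P_n = P_{n-4} \cdot P_{n-3,n}$ so the \emph{last} four factors generate and the first $n - 4 \ge 1$ factors are handled by part (i)-type bounds; this requires only $n - 4 \le 3$, i.e. $n \le 7$, and for $n \ge 8$ one can split into two blocks of $\ge 4$ each. In all cases the extra factor of $2$ lost in the independence bound accounts for the $\pi^6/432$ versus $\pi^6/216$.

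I expect the main obstacle to be bookkeeping: keeping the various crude bounds on $q_k$ (hence on individual cylinder measures) consistent across the three cases so that the constants $1/(m+1)^6$, $\pi^6/216$, $\pi^6/432$ and the denominators $(m'+1)(m' - \varphi(m') + 1)$ all come out exactly as stated, rather than merely up to an absolute constant. The genuinely new input beyond Lemmas \ref{lebesguelemma1} and \ref{generatinglemma} is small — essentially the observation that summing $1/(x+c)^2$ over a full residue class mod $m'$ of positive integers, or over $\varphi(m')$ admissible classes, yields $\asymp 1/(m'+1)$ after bounding the tail of $\zeta(2)$ — but getting the precise constant requires care with where each $+1$ comes from (smallest positive representative of a residue class mod $m'$ is at most $m'$, etc.).
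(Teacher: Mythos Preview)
Your plan for (i) is essentially the paper's proof. For (ii) and (iii), however, there are genuine gaps.

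\textbf{Part (ii).} You propose to pick, for each admissible residue $y \in \mathbb{Z}_{m'}$, a \emph{single} representative $(x_1,x_2,x_3,x_4) \in \{1,\dots,m\}^4$ and sum over the resulting $\varphi(m')$ cylinders. That yields a lower bound of the form $\varphi(m')/\bigl((m+1)^6(m'+1)(m'-\varphi(m')+1)\bigr)$, missing the factor $\pi^6/216$. This constant is not ``a tail of $\sum 1/j^2$'' as you guess; it is exactly $\zeta(2)^3$, and it arises because for each fixed $b_4$ one must sum over \emph{all} positive integer representatives of $(x_1,x_2,x_3)$ in their residue classes, i.e.\ over $(\ell_i-1)m < b_i \le \ell_i m$ for all $\ell_1,\ell_2,\ell_3 \ge 1$. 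Bounding $q_3 \le \ell_1\ell_2\ell_3(m+1)^3$ and $q_4 \le \ell_1\ell_2\ell_3(m+1)^3 b_4$ then gives $1/\bigl(q_4(q_4+q_3)\bigr) \ge 1/\bigl(\ell_1^2\ell_2^2\ell_3^2(m+1)^6 b_4(b_4+1)\bigr)$, and the triple sum over the $\ell_i$ produces $\zeta(2)^3$. The sum over the admissible $b_4 \in \{1,\dots,m'\}$ is bounded below, in the worst case, by the telescoping sum $\sum_{j=m'-\varphi(m')+1}^{m'} 1/\bigl(j(j+1)\bigr) = \varphi(m')/\bigl((m'+1)(m'-\varphi(m')+1)\bigr)$, which is where that denominator actually comes from.

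\textbf{Part (iii).} You correctly write $\lambda(P_n=g)=\sum_{h}\lambda(P_4=gh^{-1},\,P_{5,n}=h)$ and apply Lemma \ref{lebesguelemma1}, but then you try to isolate a single $h$ and bound $\lambda(P_{5,n}=h^{-1}g)$ from below. This is what sends you in circles (stationarity issues, case splits on $n$). The point you are missing is much simpler: keep the full sum over $h$, use the \emph{uniform} lower bound from (ii) on $\lambda(P_4=gh^{-1})$ (valid for every $h$, since $gh^{-1}\in G_1$), pull it out, and use $\sum_{h}\lambda(P_{5,n}=h)=1$. This gives (iii) in one line with the factor $1/2$ accounting for $\pi^6/432$ versus $\pi^6/216$, and works uniformly for all $n\ge 5$ without any further case analysis.
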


\begin{proof} \textbf{(i)} Let $0 \le n \le 3$. Either $\lambda (P_n=g)=0$, or there exist positive integers $b_1, b_2, b_3 \le m$ such that
\[ \lambda (P_n =g) \ge \lambda (a_1 = b_1, a_2=b_2, a_3 = b_3) \ge \frac{1}{(m+1)^6} . \]

\noindent\textbf{(ii)} Let $g \in G_1$, and consider the equation
\begin{equation}\label{b1b2b3b4}
\prod_{i=1}^{4} \left( \begin{array}{cc} 0 & 1 \\ 1 & b_i \end{array} \right) \pmod{m} =g
\end{equation}
in the variables $b_1, b_2, b_3, b_4 \in \mathbb{N}$. We have
\[ \lambda (P_4=g) = \sum_{b_1, b_2, b_3, b_4} \lambda (a_1=b_1, a_2=b_2, a_3=b_3, a_4=b_4) = \sum_{b_1, b_2, b_3, b_4} \frac{1}{q_4 (q_4+q_3)}, \]
where the sums are over the set of solutions of the equation \eqref{b1b2b3b4}, and $q_4$ resp.\ $q_3$ is the denominator of $[0;b_1,b_2,b_3,b_4]$ resp.\ $[0;b_1,b_2,b_3]$. Let $\ell_1, \ell_2, \ell_3 \in \mathbb{N}$. Lemma \ref{generatinglemma} implies that there exist at least $\varphi (m')$ integers $1 \le b_4 \le m'$ for which the equation \eqref{b1b2b3b4} has an integer solution $(\ell_i-1) m < b_i \le \ell_i m$, $i=1,2,3$. One readily checks that for all such solutions of \eqref{b1b2b3b4}, we have $q_3 \le \ell_1 \ell_2 \ell_3 (m+1)^3$ and $q_4 \le \ell_1 \ell_2 \ell_3 (m+1)^3 b_4$. Hence
\[ \begin{split} \lambda (P_4=g) &\ge \sum_{b_4} \sum_{\ell_1, \ell_2, \ell_3 =1}^{\infty} \frac{1}{\ell_1^2 \ell_2^2 \ell_3^2 (m+1)^6 b_4 (b_4+1)} \ge \left( \frac{\pi^2}{6} \right)^3 \frac{1}{(m+1)^6} \sum_{j=m'-\varphi(m')+1}^{m'} \frac{1}{j(j+1)} \\ &= \frac{\pi^6}{216} \cdot \frac{\varphi (m')}{(m+1)^6 (m'+1)(m'-\varphi (m')+1)} . \end{split} \]

\noindent\textbf{(iii)} Let $n \ge 5$ and $g \in G_{(-1)^n}$. Lemma \ref{lebesguelemma1} and part (ii) show that
\[ \begin{split} \lambda (P_n=g) &= \sum_{h \in G_{(-1)^n}} \lambda (P_4 = gh^{-1}, P_{5,n}=h ) \ge  \sum_{h \in G_{(-1)^n}} \frac{1}{2} \lambda (P_4 = gh^{-1}) \lambda (P_{5,n}=h) \\ &\ge \frac{\pi^6}{432} \cdot \frac{\varphi (m')}{(m+1)^6 (m'+1)(m'-\varphi (m')+1)} . \end{split} \]
\end{proof}

\subsection{A Gauss--Kuzmin--L\'evy theorem}

In this section, we prove a version of the Gauss--Kuzmin--L\'evy theorem, which will serve as the main tool of this paper. We refer to \cite[Chapter 2]{IK} for a comprehensive account of the Gauss--Kuzmin problem. Theorem \ref{gausskuzmintheorem} below generalizes results of Kesten \cite[Lemma 2.5]{KE1} and Sz\"usz \cite{SZ}. They both considered the pair $(q_{k-1}, q_k) \pmod{m}$ and $\alpha \sim \lambda$, whereas we work with the matrix $P_k$, and allow the distribution of $\alpha$ to have a Lipschitz density.
\begin{thm}\label{gausskuzmintheorem} Let $\mu \ll \lambda$, and assume \eqref{lipschitz}. For any $A \in \mathcal{A}_1^k$, any $g \in G_{(-1)^n}$ and any $B \in \mathcal{A}_{k+n+1}^{\infty}$,
\[ \left| \mu \left( A \cap \{ P_{k+1,k+n} =g \} \cap B \right) - \frac{\mu(A) \mu_{\mathrm{Gauss}}(B)}{|G_1|} \right| \le C \lambda(A) \lambda (B) e^{-\tau n} , \]
where, with $m'=\prod_{p \mid m} p$ denoting the greatest square-free divisor of $m$,
\begin{equation}\label{Ctau}
C=4L+3 \quad \textrm{and} \quad \tau=\frac{\varphi (m')}{12 (m+1)^6 (m'+1) (m'-\varphi (m')+1)} \ge \frac{1}{12 (m+1)^8} .
\end{equation}
\end{thm}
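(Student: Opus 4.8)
The plan is to recast the claim as exponentially fast convergence of the iterates of a ``twisted'' Gauss--Kuzmin transfer operator, and to prove that convergence by combining the classical Gauss--Kuzmin--L\'evy theorem (which handles the spatial variable at a rate independent of $m$) with a Doeblin--type minorization for the group variable whose quantitative input is Lemma \ref{lebesguelemma2}(iii). For a probability density $\rho$ on $[0,1]$ let $\widehat h_n[\rho](\,\cdot\,,g)$, $g\in G_{(-1)^n}$, be the density of the sub--probability measure $B'\mapsto\mathbb P_\rho(P_n=g,\ T^n\alpha\in B')$, where $\alpha$ has density $\rho$. These obey the recursion
\[ \widehat h_{n+1}[\rho](x,g)=\sum_{b\ge 1}\frac{1}{(b+x)^2}\,\widehat h_n[\rho]\!\left(\frac{1}{b+x},\ g\left(\begin{smallmatrix}0&1\\1&b\end{smallmatrix}\right)^{-1}\right), \]
they sum over $g$ to the ordinary Gauss--Kuzmin iterate $\mathcal L^n\rho$, where $\mathcal L\rho(x)=\sum_{b\ge1}(b+x)^{-2}\rho(1/(b+x))$, and they satisfy $\widehat h_{n+n'}[\rho](\,\cdot\,,g)=\sum_{g'}\widehat h_{n'}[\widehat h_n[\rho](\,\cdot\,,g')](\,\cdot\,,(g')^{-1}g)$ (interpreted linearly for signed $\rho$). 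Writing $h_\infty(x)=1/(\log 2\,(1+x))$ for the Gauss density, so that $\mu_{\mathrm{Gauss}}(B')=\int_{B'}h_\infty$, the theorem is equivalent to an $L^\infty$ bound $\|\widehat h_n[\rho](\,\cdot\,,g)-h_\infty/|G_1|\|_\infty\le c_\rho e^{-\tau n}$ for a suitable family of densities $\rho$ with a controlled constant $c_\rho$.

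\textbf{Reduction to $k=0$.} Decomposing $A\in\mathcal A_1^k$ into its atoms $\{a_1=b_1,\dots,a_k=b_k\}$ and changing variables $\alpha\mapsto y=T^k\alpha$ on each (under which $\mu|_A$ acquires the explicit density $\sum_{C}(q_k^{(C)}y+q_{k-1}^{(C)})^{-2}(\mathrm d\mu/\mathrm d\lambda)(\phi_C(y))$ with $\phi_C$ the branch of $T^{-k}$, $\{P_{k+1,k+n}=g\}$ becomes $\{P_n=g\}$, and $B=T^{-(k+n)}B'$), the left--hand side of the theorem for $A$ equals $\mu(A)\,\bigl|\int_{B'}\widehat h_n[\rho_A](x,g)\,\mathrm dx-\mu_{\mathrm{Gauss}}(B')/|G_1|\bigr|$, where $\rho_A$ is the normalised $T^k$--push--forward of $\mu|_A$. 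By Lemma \ref{lebesguelemma1} (with $A'=[0,1]$) one gets $\rho_A\le 2(1+L)(\lambda(A)/\mu(A))\,\mathcal L^k\mathds{1}$; since $\mathcal L^k\mathds{1}$ is uniformly comparable to $h_\infty$ for $k\ge1$, and equals $\mathds{1}$ when $k=0$ (in which case $\rho_A$ is Lipschitz-$L$), it suffices to prove the $L^\infty$ bound for $\rho$ ranging over a fixed family of densities dominated by a constant multiple of $h_\infty$ (Lipschitz-$L$ when $k=0$), with $c_\rho$ bounded by $(4L+3)\lambda(A)/\mu(A)$, so that summing $\mu(A)c_{\rho_A}$ over the atoms recovers $C=4L+3$.

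\textbf{The two ingredients.} (i) The classical Gauss--Kuzmin--L\'evy theorem (cf.\ \cite[Ch.~2]{IK}; a Lasota--Yorke estimate $\mathrm{Lip}(\mathcal L f)\le\theta\,\mathrm{Lip}(f)+C'\|f\|_1$ together with Kuzmin's argument) yields $\|\mathcal L^n\rho-h_\infty\|_\infty\le c_\rho q^n$ with an absolute $q<1$, and keeps the iterates $\mathcal L^n\rho$ and all components $\widehat h_n[\rho](\,\cdot\,,g)$ in a bounded family of Lipschitz densities; as $q\ll e^{-\tau}$, the spatial variable is never the bottleneck. (ii) The group variable is governed by a minorization obtained by carrying the proof of Lemma \ref{lebesguelemma2}(iii) --- which realises each $g\in G_{-1}$ by five matrices $\left(\begin{smallmatrix}0&1\\1&b_i\end{smallmatrix}\right)$ via Lemma \ref{generatinglemma} and counts the corresponding cylinders with the explicit Lebesgue--measure estimates --- at the level of densities rather than total mass, and uniformly over the bounded Lipschitz family of (i): for every such density $F$ and every $M\in G_{-1}$,
\[ \int_{B'}\Bigl(\sum_{(b_1,\dots,b_5):\,\prod_i\left(\begin{smallmatrix}0&1\\1&b_i\end{smallmatrix}\right)\equiv M}\frac{F(\phi_{(b_i)}(x))}{(q_5x+q_4)^2}\Bigr)\mathrm dx\ \ge\ 5\tau\,\Bigl(\int_0^1 F\Bigr)\mu_{\mathrm{Gauss}}(B'). \]
Writing $\widehat e_n:=\widehat h_n[\rho]-\tfrac1{|G_1|}h_\infty\mathds{1}_{[0,1]\times G_{(-1)^n}}$, one has $\widehat{\mathcal L}(h_\infty\mathds{1}_{G_D})=h_\infty\mathds{1}_{G_{-D}}$ and $\widehat{\mathcal L}$ preserves total integral, so $\widehat e_{n+1}=\widehat{\mathcal L}\widehat e_n$ and $\iint\widehat e_n=0$; splitting $\widehat e_n(\,\cdot\,,g)=\tfrac1{|G_1|}\varepsilon_n+f_n(\,\cdot\,,g)$ with $\varepsilon_n=\mathcal L^n\rho-h_\infty$ (so $\sum_g f_n(\,\cdot\,,g)=0$) and applying $\widehat{\mathcal L}^5$ to the nonnegative functions $\Phi_n h_\infty-f_n(\,\cdot\,,g')$, where $\Phi_n=\sup_{g'}\|f_n(\,\cdot\,,g')/h_\infty\|_\infty$, the displayed minorization gives $\Phi_{n+5}\le(1-5\tau|G_1|)\Phi_n$. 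Hence $\Phi_n$ --- and therefore $\|\widehat e_n(\,\cdot\,,g)/h_\infty\|_\infty\le\Phi_n+|G_1|^{-1}\|\varepsilon_n/h_\infty\|_\infty$ --- decays at rate at least $e^{-\tau n}$ (the second term decays at rate $q$), and one further application of $\widehat{\mathcal L}$ (smooth kernel) converts this into the desired $L^\infty$ bound on $\widehat h_n[\rho](\,\cdot\,,g)-h_\infty/|G_1|$.

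\textbf{Constants and the main obstacle.} The exponent is $\tau$: it is the per--step decay forced above, and matches the stated value because the minorization inherits from Lemma \ref{lebesguelemma2}(iii) exactly the factor $\varphi(m')/\bigl((m+1)^6(m'+1)(m'-\varphi(m')+1)\bigr)$; the bound $\tau\ge 1/(12(m+1)^8)$ follows from $\varphi(m')\ge1$, $m'\le m$ and $m'-\varphi(m')+1\le m'$, giving $\varphi(m')/\bigl((m'+1)(m'-\varphi(m')+1)\bigr)\ge 1/\bigl(m'(m'+1)\bigr)\ge1/(m+1)^2$. The additive constant $4L+3$ comes from the dependence of the first few iterates on the Lipschitz constant --- the $4L$ via $\rho_A\le2(1+L)(\lambda(A)/\mu(A))\mathcal L^k\mathds{1}$ and the Lasota--Yorke constant --- together with the factor--$2$ losses absorbed from Lemma \ref{lebesguelemma1} and from comparing $\lambda$ with $\mu_{\mathrm{Gauss}}$ on tail $\sigma$-algebras (the $3$). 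I expect ingredient (ii) --- upgrading Lemma \ref{lebesguelemma2}(iii) to the stated density--level, set--wise minorization uniformly over the bounded Lipschitz family, with a constant sharp enough to retain the factor $5\tau$ and hence exactly $\tau$ per step after the bookkeeping --- to be the main obstacle, as it must combine the generation lemma, the explicit cylinder estimates, and the regularization from ingredient (i), all the while tracking every absolute factor.
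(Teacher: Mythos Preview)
Your overall strategy---a twisted transfer operator on $[0,1]\times G$ together with a Doeblin-type minorization in the group variable---is the right kind of argument and is close in spirit to the paper's. But the paper does \emph{not} decouple spatial and group variables the way you do, and the decoupling is where your argument loses traction. Rather than pushing forward by $T^k$ and splitting $\widehat e_n=|G_1|^{-1}\varepsilon_n+f_n$, the paper keeps the cylinder $A$ fixed, sets $F_{n,g}(x)=(\log 2)(1+x)f_{n,g}(x)$, and tracks the \emph{coupled} pair $(L_n,\delta_n)$ with $L_n=\max_g\|F_{n,g}\|_{\mathrm{Lip}}$ and $\delta_n=\max_{g,x}F_{n,g}(x)-\min_{g,x}F_{n,g}(x)$ the oscillation over both variables at once. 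Differentiating the recursion yields $L_n\le(1-\zeta(2)+\zeta(3))L_{n-1}+\tfrac14\delta_{n-1}$, while a pointwise minorization over four steps (Lemma~\ref{lebesguelemma2}(ii), not (iii)) gives $\delta_n\le(1-\tfrac{\pi^6}{432}\kappa_m)\delta_{n-4}+\tfrac{\pi^6}{432}\kappa_m L_{n-4}$. Iterating $\max\{L_n,\delta_n\}$ in blocks of five produces the explicit rate; the factor $1/12$ in $\tau$ arises from a numerical check that the combined contraction constant exceeds $0.08584>1/12$, and $C=4L+3$ comes from the explicit Lipschitz bound $\|F_{0,g}\|_{\mathrm{Lip}}\le(\log 2)(3L+2)\lambda(A)$ for the fixed cylinder. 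None of these specific constants are recoverable from your scheme as written.

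The gap you identify is real and is exactly why the coupling is needed. Your minorization is only set-wise (integrated), while the quantity $\Phi_n$ you want to contract is an $L^\infty$ norm. In the paper the passage from integrated to pointwise is achieved precisely through the $L_{n-4}$ term: one uses $F_{n-4,g_0}(x)\le W_{n-4}^-+L_{n-4}$ to convert the measure lower bound into a pointwise upper bound on $F_{n,g}$, and that is what couples the two recursions. Without simultaneously controlling the Lipschitz constant of each $f_n(\cdot,g)$, an integrated minorization alone does not force $\Phi_{n+5}\le(1-5\tau|G_1|)\Phi_n$; your ``one further application of $\widehat{\mathcal L}$'' cannot repair this, since regularization buys continuity but not the needed cancellation across group elements. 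A secondary issue: your reduction to $k=0$ yields a starting density $\rho_A$ that is merely bounded (not Lipschitz) for $k\ge1$, so ingredient~(i) would need a Gauss--Kuzmin theorem in a larger function class, whereas the paper's direct computation of $F_{0,g}$ keeps everything Lipschitz from the outset and is where $\lambda(A)$ and $L$ enter the final constant.
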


\begin{proof} Throughout the proof we fix $k \ge 0$ and a set of the form $A=\{ a_1=b_1, \ldots, a_k =b_k \}$, with the convention $A=[0,1]$ if $k=0$. It will be enough to prove the theorem for this set $A$, as the claim for a general $A \in \mathcal{A}_1^k$ then follows by $\sigma$-additivity.

Given a Lipschitz function $F: [0,1] \to \mathbb{R}$, let
\[ \| F \|_{\mathrm{Lip}} = \sup_{\substack{x,y \in [0,1] \\ x \neq y}} \frac{|F(x)-F(y)|}{|x-y|} = \operatorname*{ess \, sup}_{x \in [0,1]} |F'(x)| \]
denote the Lipschitz constant. We implicitly use the fact that Lipschitz functions are a.e.\ differentiable, and satisfy the fundamental theorem of calculus.

For any $n \ge 0$ and $g \in G_{(-1)^n}$, the measure $B \mapsto \mu (A \cap \{ P_{k+1,k+n}=g \} \cap T^{-(k+n)}B)$, $B \in \mathcal{B}$ is absolutely continuous. Let $f_{n,g}$ denote its density with respect to $\lambda$. It will be enough to prove that
\begin{equation}\label{enough}
\sup_{\substack{g \in G_{(-1)^n} \\ x \in [0,1]}} \left| f_{n,g} (x) - \frac{\mu(A)}{|G_1| (\log 2)(1+x)} \right| \le C \lambda (A) e^{-\tau n} .
\end{equation}

Let $F_{n,g}(x)=(\log 2)(1+x) f_{n,g}(x)$, $x \in [0,1]$. We start with the case $n=0$.
\begin{lem}\label{F0lemma} For any $g \in G_1$, the functions $f_{0,g}$ and $F_{0,g}$ are Lipschitz, and we have
\[ \| F_{0,g} \|_{\mathrm{Lip}} \le (\log 2)(3L+2) \lambda(A) \quad \textrm{and} \quad 0 \le F_{0,g}(x) \le (\log 2) (L+2) \lambda(A) . \]
\end{lem}

\begin{proof} Assumption \eqref{lipschitz} implies that for all $x \in [0,1]$,
\[ \left| \frac{\mathrm{d}\mu}{\mathrm{d}\lambda} (x) -1 \right| = \left| \int_0^1 \left( \frac{\mathrm{d}\mu}{\mathrm{d}\lambda} (x) - \frac{\mathrm{d}\mu}{\mathrm{d}\lambda} (y) \right) \, \mathrm{d} y \right| \le \int_0^1 L |x-y| \, \mathrm{d} y \le \frac{L}{2} . \]
In particular, $\max_{[0,1]} \frac{\mathrm{d}\mu}{\mathrm{d}\lambda} \le \frac{L}{2}+1$.

By construction,
\[ \int_0^x f_{0,g}(t) \, \mathrm{d} t = \left\{ \begin{array}{ll} \mu (a_1=b_1, \ldots, a_k=b_k, [0;a_{k+1}, a_{k+2}, \ldots] \le x) & \textrm{if } g=1 \in G_1, \\ 0 & \textrm{if } g \neq 1 \in G_1. \end{array} \right. \]
The set $\{ a_1=b_1, \ldots, a_k=b_k, [0;a_{k+1}, a_{k+2}, \ldots] \le x \}$ is an interval with endpoints $p_k/q_k$ and $(p_k+p_{k-1}x) / (q_k + q_{k-1} x)$. Differentiating the previous formula with respect to $x$ thus gives
\[ f_{0,g} (x) = \left\{ \begin{array}{ll} \frac{\mathrm{d}\mu}{\mathrm{d} \lambda} \left( \frac{p_k+p_{k-1}x}{q_k +q_{k-1}x} \right) \frac{1}{(q_k+q_{k-1} x)^2} & \textrm{if } g=1 \in G_1, \\ 0 & \textrm{if } g \neq 1 \in G_1 . \end{array} \right. \]
Strictly speaking, the density function $f_{0,g}$ is only defined up to a.e.\ equivalence, and Lebesgue's differentiation theorem yields the previous formula for a.e.\ $x$. However, the right-hand side is a Lipschitz function by assumption, thus $f_{0,g}$ can be chosen to be Lipschitz. The previous formula thus holds for all $x \in [0,1]$, $F_{0,g}$ is Lipschitz, and for a.e.\ $x$ we have
\[ \begin{split} |F_{0,g}' (x)| &= (\log 2) \left| \left( \frac{\mathrm{d}\mu}{\mathrm{d}\lambda} \right)' \left( \frac{p_k+p_{k-1}x}{q_k +q_{k-1}x} \right) \frac{(-1)^k (1+x)}{(q_k + q_{k-1} x)^4} + \frac{\mathrm{d}\mu}{\mathrm{d}\lambda} \left( \frac{p_k+p_{k-1}x}{q_k +q_{k-1}x} \right) \frac{q_k-(2+x)q_{k-1}}{(q_k+q_{k-1}x)^3}  \right| \\ &\le \frac{\log 2}{(q_k+q_{k-1}x)^2} \left( L \frac{1+x}{q_k+q_{k-1}x} + \left( \frac{L}{2}+1 \right) \frac{|q_k-(2+x)q_{k-1}|}{q_k+q_{k-1}x} \right) \\ &\le \frac{\log 2}{q_k^2} \left( L \frac{2}{q_k+q_{k-1}} + \frac{L}{2}+1 \right) \le \frac{(\log 2)(3L+2)}{q_k (q_k+q_{k-1})} = (\log 2) (3L+2) \lambda (A) . \end{split} \]
In particular, $\| F_{0,g} \|_{\mathrm{Lip}} \le (\log 2)(3L+2) \lambda(A)$, as claimed. Further,
\[ 0 \le F_{0,g} (x) \le \max_{[0,1]} \frac{\mathrm{d}\mu}{\mathrm{d}\lambda} \cdot \frac{2 \log 2}{q_k (q_k+q_{k-1})} \le (\log 2) (L+2) \lambda (A) . \]
This finishes the proof of Lemma \ref{F0lemma}.
\end{proof}

We now prove that $f_{n,g}$ can be chosen to be Lipschitz by induction on $n$, the base case $n=0$ having been established in Lemma \ref{F0lemma}. Let $n \ge 1$ and $b \in \mathbb{N}$, and note that (ignoring endpoints)
\[ [0;a_{k+n+1}, a_{k+n+2}, \ldots] \le x \textrm{ and } a_{k+n}=b \, \Longleftrightarrow \, [0;a_{k+n}, a_{k+n+1}, \ldots] \in \left[ \frac{1}{b+x}, \frac{1}{b} \right] . \]
Using the partition $\{ a_{k+n}=b \}$, $b \in \mathbb{N}$ thus leads to
\[ \mu (A \cap \{ P_{k+1,k+n} =g \} \cap T^{-(k+n)} [0,x]) = \sum_{b=1}^{\infty} \mu \left( A \cap \{ P_{k+1,k+n-1} =gh(b)^{-1} \} \cap T^{-(k+n-1)} \left[ \frac{1}{b+x}, \frac{1}{b} \right] \right) , \]
where $h(b)=\left( \begin{array}{cc} 0 & 1 \\ 1 & b \end{array} \right) \pmod{m}$. Equivalently,
\[ \int_0^x f_{n,g} (t) \, \mathrm{d} t = \sum_{b=1}^{\infty} \int_{\frac{1}{b+x}}^{\frac{1}{b}} f_{n-1,gh(b)^{-1}} (t) \, \mathrm{d}t . \]
One readily checks that the series on the right-hand side can be differentiated term by term using the inductive hypothesis that $f_{n-1,gh(b)^{-1}}$ is Lipschitz, and we obtain
\[ f_{n,g}(x) = \sum_{b=1}^{\infty} f_{n-1, gh(b)^{-1}} \left( \frac{1}{b+x} \right) \frac{1}{(b+x)^2} . \]
The right-hand side is easily seen to be Lipschitz, hence $f_{n,g}$ can be chosen to be Lipschitz, and the previous formula holds for all $x \in [0,1]$. This finishes the induction. The recursion above should be compared to the Perron--Frobenius operator of the Gauss map \cite[Chapter 2]{IK}.

The recursion can be written in terms of $F_{n,g}$ as
\begin{equation}\label{recursion1}
F_{n,g}(x) = \sum_{b=1}^{\infty} F_{n-1,gh(b)^{-1}} \left( \frac{1}{b+x} \right) \frac{1+x}{(b+x)(b+1+x)} \qquad \textrm{for all } x \in [0,1] .
\end{equation}
Taking the derivative leads to
\begin{equation}\label{recursion2}
\begin{split} F_{n,g}'(x) = \sum_{b=1}^{\infty} \bigg( &F_{n-1,gh(b)^{-1}}' \left( \frac{1}{b+x} \right) \frac{-(1+x)}{(b+x)^3(b+1+x)} \\ &+ F_{n-1,gh(b)^{-1}} \left( \frac{1}{b+x} \right) \frac{b(b-1)-(1+x)^2}{(b+x)^2 (b+1+x)^2} \bigg) \qquad \textrm{for a.e. } x \in [0,1] . \end{split}
\end{equation}
For comparison, note the identities
\begin{equation}\label{identity}
\sum_{b=1}^{\infty} \frac{1+x}{(b+x)(b+1+x)} =1 \qquad \textrm{and} \qquad \sum_{b=1}^{\infty} \frac{b(b-1)-(1+x)^2}{(b+x)^2 (b+1+x)^2} =0.
\end{equation}
Indeed, the first series is telescoping, and the second identity follows from the first one via term by term differentiation. Define $L_n = \max_{g \in G_{(-1)^n}} \| F_{n,g} \|_{\mathrm{Lip}}$, and
\[ W_n^{-}= \min_{\substack{g \in G_{(-1)^n} \\ x \in [0,1]}} F_{n,g}(x),  \qquad W_n^{+}= \max_{\substack{g \in G_{(-1)^n} \\ x \in [0,1]}} F_{n,g}(x), \qquad \delta_n = W_n^{+}-W_n^{-} . \]
The recursion \eqref{recursion1} and the first identity in \eqref{identity} immediately show that $W_{n-1}^{-} \le W_n^{-}$ and $W_n^{+} \le W_{n-1}^{+}$, hence $\delta_n \le \delta_{n-1}$.

\begin{lem}\label{Lnlemma} We have $L_n \le (1-\zeta(2)+\zeta(3)) L_{n-1} + (1/4) \delta_{n-1}$, where $\zeta$ is the Riemann zeta function. The sum of the coefficients $1-\zeta(2)+\zeta(3)=0.5571\ldots$ and $1/4$ is less than $1$.
\end{lem}

\begin{proof} The recursion \eqref{recursion2} and the second identity in \eqref{identity} show that
\[ |F_{n,g}'(x)| \le L_{n-1} \sum_{b=1}^{\infty} \frac{1+x}{(b+x)^3(b+1+x)} + \sum_{b=1}^{\infty} \left| F_{n-1,gh(b)^{-1}} \left( \frac{1}{b+x} \right) -c \right| \frac{|b(b-1)-(1+x)^2|}{(b+x)^2(b+1+x)^2} \]
with any $c=c(g,x,y)$ which does not depend on $b$. Choosing $c=F_{n-1,gh(1)^{-1}} (1/(1+x))$, the $b=1$ term cancels, and by the definition of $\delta_{n-1}$ we obtain
\[ |F_{n,g}'(x)| \le L_{n-1} \sum_{b=1}^{\infty} \frac{1+x}{(b+x)^3(b+1+x)} + \delta_{n-1} \sum_{b=2}^{\infty} \frac{|b(b-1)-(1+x)^2|}{(b+x)^2(b+1+x)^2} . \]
The derivative of the first series is
\[ \left( \sum_{b=1}^{\infty} \frac{1+x}{(b+x)^3(b+1+x)} \right)' = -\frac{3x+5}{(1+x)^3 (2+x)^2} + \sum_{b=2}^{\infty} \frac{b^2-3b-3-(2b+6)x - 3x^2}{(b+x)^4 (b+1+x)^2} . \]
One readily checks that $-(3x+5)/((1+x)^3(2+x)^2)$ is increasing on $[0,1]$, and that the terms $b=2$ and $b=3$ in the series in the previous formula are negative for all $x \in [0,1]$. In particular,
\[ \left( \sum_{b=1}^{\infty} \frac{1+x}{(b+x)^3(b+1+x)} \right)' \le - \frac{1}{9} + \sum_{b=4}^{\infty} \frac{b^2 -3b-3}{b^4 (b+1)^2} =-0.1098\ldots <0 . \]
Therefore the maximum is attained at $x=0$, and
\[ \sum_{b=1}^{\infty} \frac{1+x}{(b+x)^3(b+1+x)} \le \sum_{b=1}^{\infty} \frac{1}{b^3(b+1)} = \sum_{b=1}^{\infty} \left( \frac{1}{b(b+1)} - \frac{1}{b^2} + \frac{1}{b^3} \right) = 1-\zeta(2)+\zeta(3) . \]
Considering $b=2$ and $b \ge 3$ separately, we check that each term in the second series attains its maximum at $x=0$ as well, hence
\[ \sum_{b=2}^{\infty} \frac{|b(b-1)-(1+x)^2|}{(b+x)^2(b+1+x)^2} \le \sum_{b=2}^{\infty} \frac{b^2-b-1}{b^2 (b+1)^2} = \sum_{b=2}^{\infty} \left( \frac{1}{b(b+1)} - \frac{1}{b^2} + \frac{1}{(b+1)^2} \right) = \frac{1}{4} . \]
This finishes the proof of Lemma \ref{Lnlemma}.
\end{proof}

For the sake of readability, let $\kappa_m = \varphi (m')/((m+1)^6 (m'+1)(m'-\varphi (m')+1))$.
\begin{lem}\label{deltanlemma} We have
\[ \delta_n \le \left( 1-\frac{\pi^6}{432} \kappa_m \right) \delta_{n-4} + \frac{\pi^6}{432} \kappa_m L_{n-4} . \]
\end{lem}

\begin{proof} If $L_{n-4} \ge \delta_{n-4}$, then the desired upper bound is greater or equal than $\delta_{n-4}$, and the claim follows from the fact that $\delta_n$ is nonincreasing. We may thus assume that $L_{n-4} < \delta_{n-4}$. 

Let $g \in G_1$ and $B \in \mathcal{B}$ be arbitrary. The partition $\{ P_{k+n-3,k+n}=h \}$, $h \in G_1$ leads to
\[ \mu ( A \cap \{ P_{k+1,k+n}=g \} \cap T^{-(k+n)} B ) = \sum_{h \in G_1} \mu (A \cap \{ P_{k+1,k+n-4} =gh^{-1} \} \cap T^{-(k+n-4)} (\{ P_4 =h \} \cap T^{-4} B) ) . \]
By the definition of $f_{n,g}$ and $F_{n,g}$, this is equivalent to
\[ \int_B F_{n,g} (x) \, \mathrm{d}\mu_{\mathrm{Gauss}}(x) = \sum_{h \in G_1} \int_{\{ P_4=h \} \cap T^{-4}B} F_{n-4,gh^{-1}} (x) \, \mathrm{d}\mu_{\mathrm{Gauss}}(x) . \]
Fix a pair $(g_0,x_0) \in G_1 \times [0,1]$ at which the minimum $F_{n-4,g_0}(x_0) = W_{n-4}^{-}$ is attained. Using the bound $F_{n-4,gh^{-1}}(x) \le W_{n-4}^{+}$ for all $h \neq g_0^{-1}g$ yields
\[ \int_B F_{n,g} (x) \, \mathrm{d}\mu_{\mathrm{Gauss}}(x) \le W_{n-4}^{+} \mu_{\mathrm{Gauss}} (B) + \int_{\{ P_4=g_0^{-1}g \} \cap T^{-4} B} \left( F_{n-4,g_0} (x) - W_{n-4}^{+} \right) \, \mathrm{d} \mu_{\mathrm{Gauss}} (x) . \]
Here $F_{n-4,g_0}(x) \le F_{n-4,g_0}(x_0) + L_{n-4} |x-x_0| \le W_{n-4}^{-} + L_{n-4}$, thus
\[ \int_B F_{n,g} (x) \, \mathrm{d}\mu_{\mathrm{Gauss}}(x) \le W_{n-4}^{+} \mu_{\mathrm{Gauss}} (B) + (L_{n-4} - \delta_{n-4}) \mu_{\mathrm{Gauss}} (\{ P_4=g_0^{-1}g \} \cap T^{-4} B) . \]
Note that $L_{n-4} - \delta_{n-4}<0$ by assumption. Lemmas \ref{lebesguelemma1} and \ref{lebesguelemma2} show that
\[ \lambda (\{ P_4=g_0^{-1}g \} \cap T^{-4} B) \ge \frac{1}{2} \lambda (P_4=g_0^{-1}g) \lambda (T^{-4}B) \ge \frac{\pi^6}{432} \kappa_m \lambda (T^{-4}B) , \]
and as the density of $\mu_{\mathrm{Gauss}}$ lies between $1/(2 \log 2)$ and $1/\log 2$,
\[ \mu_{\mathrm{Gauss}} (\{ P_4=g_0^{-1}g \} \cap T^{-4} B) \ge \frac{\pi^6}{864} \kappa_m \mu_{\mathrm{Gauss}} (B) . \]
Therefore
\[  \int_B F_{n,g} (x) \, \mathrm{d}\mu_{\mathrm{Gauss}}(x) \le W_{n-4}^{+} \mu_{\mathrm{Gauss}} (B) + (L_{n-4} - \delta_{n-4})  \frac{\pi^6}{864} \kappa_m \mu_{\mathrm{Gauss}} (B) . \]
A similar proof shows the lower bound
\[ \int_B F_{n,g} (x) \, \mathrm{d}\mu_{\mathrm{Gauss}}(x) \ge W_{n-4}^{-} \mu_{\mathrm{Gauss}} (B) - (L_{n-4} - \delta_{n-4}) \frac{\pi^6}{864} \kappa_m \mu_{\mathrm{Gauss}} (B) . \]
As the previous two formulas hold for all $B \in \mathcal{B}$, we have
\[  W_{n-4}^{-} - (L_{n-4} - \delta_{n-4}) \frac{\pi^6}{864} \kappa_m \le F_{n,g} (x) \le W_{n-4}^{+} +(L_{n-4} - \delta_{n-4})  \frac{\pi^6}{864} \kappa_m , \]
hence
\[ \delta_n \le \delta_{n-4} + (L_{n-4} - \delta_{n-4})  \frac{\pi^6}{432} \kappa_m . \]
This finishes the proof of Lemma \ref{deltanlemma}.
\end{proof}

Let $z=1-\zeta(2)+\zeta(3)$. Iterating Lemma \ref{Lnlemma} five times and using the fact that $\delta_n$ is nonincreasing shows that
\[ L_n \le z^5 L_{n-5} + (z^4+z^3+z^2+z+1)\frac{1}{4} \delta_{n-5}, \]
where the sum of the coefficients is $z^5+(z^4+z^3+z^2+z+1)/4= 0.5878\ldots$. Lemmas \ref{deltanlemma} and \ref{Lnlemma} yield
\[ \delta_n \le \left( 1-\frac{\pi^6}{432} \kappa_m \right) \delta_{n-5} + \frac{\pi^6}{432} \kappa_m \left( z L_{n-5} + \frac{1}{4} \delta_{n-5} \right) , \]
where the sum of the coefficients is
\[ 1-\frac{\pi^6}{432} \kappa_m + \frac{\pi^6}{432} \kappa_m \left( z+\frac{1}{4} \right) = 1-0.08584\ldots \cdot (5 \kappa_m) > 0.5879 . \]
We thus have the recursive upper bound $\max\{ L_n, \delta_n \} \le (1-0.08584 \cdot (5 \kappa_m)) \max \{ L_{n-5}, \delta_{n-5} \}$. Lemma \ref{F0lemma} implies that $\max \{ L_i, \delta_i \} \le (\log 2) (3L+2) \lambda(A)$ for $i=0$, and four applications of Lemma \ref{Lnlemma} shows that the same holds for $i=1,2,3,4$. Iterating the recursive upper bound $\lfloor n/5 \rfloor$ times thus leads to
\[ \max \{ L_n, \delta_n \} \le \left( 1-0.08584 \cdot (5 \kappa_m) \right)^{\lfloor n/5 \rfloor} (\log 2) (3L+2) \lambda(A) \le C_0 e^{-0.08584 \kappa_m n} \]
with $C_0 = e^{0.08584 \cdot (5 \kappa_m)}(\log 2) (3L+2)$. By the definition of $\delta_n$, this means that $F_{n,g}(x)$ lies in a given interval of length $C_0 e^{-0.08584 \kappa_m n}$ for all $g \in G_{(-1)^n}$ and $x \in [0,1]$. Then the average value
\[ \frac{1}{|G_1|} \sum_{g \in G_{(-1)^n}} \int_0^1 F_{n,g}(x) \, \mathrm{d} \mu_{\mathrm{Gauss}} (x) = \frac{\mu(A)}{|G_1|} \]
lies in the same interval, hence $| F_{n,g}(x) - \mu(A)/|G_1| | \le C_0 e^{-0.08584 \kappa_m n}$. Formula \eqref{enough} follows with
\[ C= \frac{C_0}{\log 2} \le 4L+3 \quad \textrm{and} \quad \tau=0.08584 \kappa_m > \frac{\kappa_m}{12} . \]
This finishes the proof of Theorem \ref{gausskuzmintheorem}.
\end{proof}

We now show that the limit relation in Theorem \ref{gausskuzmintheorem} without the exponential rate remains true for an arbitrary absolutely continuous measure, without assuming that the density is Lipschitz.
\begin{cor}\label{gausskuzmincorollary} Let $\mu \ll \lambda$. Then
\[ \lim_{n \to \infty} \sup_{k \ge 0} \sup_{\substack{A \in \mathcal{A}_1^k, \,\, B \in \mathcal{A}_{k+n+1}^{\infty} \\ g \in G_{(-1)^n}}} \left| \mu \left( A \cap \{ P_{k+1,k+n}=g \} \cap B \right) - \frac{\mu(A) \mu_{\mathrm{Gauss}}(B)}{|G_1|} \right| =0 . \]
\end{cor}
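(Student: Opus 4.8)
The plan is to approximate $\mathrm{d}\mu/\mathrm{d}\lambda$ in the $L^1$ norm by Lipschitz probability densities and to apply Theorem \ref{gausskuzmintheorem}, exploiting the crucial fact that the rate $\tau$ there depends only on $m$.

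First I would fix $\varepsilon \in (0,1/2)$ and construct a Lipschitz probability density $\rho_\varepsilon$ on $[0,1]$ with $\| \mathrm{d}\mu/\mathrm{d}\lambda - \rho_\varepsilon \|_{L^1[0,1]} < \varepsilon$. Since the Lipschitz functions are dense in $L^1[0,1]$, one can pick a Lipschitz $\sigma$ with $\| \mathrm{d}\mu/\mathrm{d}\lambda - \sigma \|_{L^1} < \varepsilon/3$; replacing $\sigma$ by $\max(\sigma,0)$ keeps the Lipschitz constant and does not increase the $L^1$-distance to the nonnegative function $\mathrm{d}\mu/\mathrm{d}\lambda$, and then $\int_0^1 \sigma$ lies within $\varepsilon/3$ of $1$, so it exceeds $1/2$; hence $\rho_\varepsilon := \sigma / \int_0^1 \sigma$ is a probability density, Lipschitz with some constant $L_\varepsilon$, and satisfies $\| \mathrm{d}\mu/\mathrm{d}\lambda - \rho_\varepsilon \|_{L^1} < \varepsilon$. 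Let $\mu_\varepsilon$ be the probability measure with density $\rho_\varepsilon$, so that $|\mu(E) - \mu_\varepsilon(E)| \le \| \mathrm{d}\mu/\mathrm{d}\lambda - \rho_\varepsilon \|_{L^1} < \varepsilon$ for every Borel set $E \subseteq [0,1]$.

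Next I would apply Theorem \ref{gausskuzmintheorem} to $\mu_\varepsilon$, which satisfies \eqref{lipschitz} with $L = L_\varepsilon$: for all $k \ge 0$, $A \in \mathcal{A}_1^k$, $g \in G_{(-1)^n}$ and $B \in \mathcal{A}_{k+n+1}^\infty$,
\[ \left| \mu_\varepsilon \left( A \cap \{ P_{k+1,k+n} = g \} \cap B \right) - \frac{\mu_\varepsilon(A)\, \mu_{\mathrm{Gauss}}(B)}{|G_1|} \right| \le (4 L_\varepsilon + 3)\, \lambda(A) \lambda(B)\, e^{-\tau n} \le (4 L_\varepsilon + 3)\, e^{-\tau n}, \]
where $\tau$ depends only on $m$. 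Combining this with the two estimates $|\mu(A \cap \{ P_{k+1,k+n}=g \} \cap B) - \mu_\varepsilon(A \cap \{ P_{k+1,k+n}=g \} \cap B)| < \varepsilon$ and $|\mu(A) - \mu_\varepsilon(A)|\, \mu_{\mathrm{Gauss}}(B)/|G_1| < \varepsilon$ via the triangle inequality gives, uniformly over all admissible $k, A, B, g$,
\[ \left| \mu \left( A \cap \{ P_{k+1,k+n} = g \} \cap B \right) - \frac{\mu(A)\, \mu_{\mathrm{Gauss}}(B)}{|G_1|} \right| < 2\varepsilon + (4 L_\varepsilon + 3)\, e^{-\tau n}. \]

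Finally I would take the supremum over $k \ge 0$, $A \in \mathcal{A}_1^k$, $B \in \mathcal{A}_{k+n+1}^\infty$ and $g \in G_{(-1)^n}$ (the right-hand side above is independent of these), so that the supremum in the statement is at most $2\varepsilon + (4 L_\varepsilon + 3)\, e^{-\tau n}$; since $L_\varepsilon$ and $\tau$ are now fixed, letting $n \to \infty$ shows that the $\limsup$ of this supremum is at most $2\varepsilon$, and then letting $\varepsilon \to 0$ finishes the proof. There is no real obstacle here; the only point to watch is that $\tau$ in Theorem \ref{gausskuzmintheorem} is independent of the measure, which is what lets one first fix $\varepsilon$ (hence $L_\varepsilon$) and only afterwards send $n \to \infty$ — the $L^1$ approximation by a Lipschitz probability density is entirely routine.
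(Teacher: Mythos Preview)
Your proposal is correct and follows essentially the same approach as the paper: approximate $\mathrm{d}\mu/\mathrm{d}\lambda$ in $L^1$ by a Lipschitz probability density, apply Theorem~\ref{gausskuzmintheorem} to the approximating measure, and use the triangle inequality together with the fact that $\tau$ depends only on $m$. The paper obtains its approximant via a positive mollifier on $\mathbb{R}/\mathbb{Z}$ (yielding a smooth, strictly positive density), whereas you truncate and renormalize a Lipschitz $L^1$-approximant; this is a purely cosmetic difference, since Theorem~\ref{gausskuzmintheorem} requires only the Lipschitz condition and not positivity.
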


\begin{proof} Let $f(x) = \frac{\mathrm{d}\mu}{\mathrm{d}\lambda} (x)$ denote the density function. Using a positive mollifier on the circle group $\mathbb{R}/\mathbb{Z}$, we deduce that for any $\varepsilon >0$ there exists a smooth function $f_{\varepsilon}$ on $[0,1]$ such that
\[ \int_0^1 f_{\varepsilon} (x) \, \mathrm{d} x =1, \qquad \int_0^1 |f(x)-f_{\varepsilon}(x)| \, \mathrm{d}x < \varepsilon, \qquad \min_{x \in [0,1]} f_{\varepsilon}(x) >0 . \]
Let $\mu_{\varepsilon}$ be the Borel probability measure on $[0,1]$ with density $f_{\varepsilon}$. In particular, $|\mu (A) - \mu_{\varepsilon} (A)|<\varepsilon$ for all Borel sets $A \subseteq [0,1]$. Since $\mu_{\varepsilon}$ has a positive smooth density, the claim holds for $\mu_{\varepsilon}$ by Theorem \ref{gausskuzmintheorem}. As $\varepsilon$ was arbitrary, the claim holds also for $\mu$.
\end{proof}

\subsection{Weak convergence and mixing properties of $P_n$}

Theorem \ref{gausskuzmintheorem} and Corollary \ref{gausskuzmincorollary} immediately imply that $P_{2n}$ resp.\ $P_{2n-1}$ converges to the uniform distribution on $G_1$ resp.\ $G_{-1}$.
\begin{cor}\label{weakconvergencecorollary} Let $\alpha \sim \mu$ with $\mu \ll \lambda$. Then $P_{2n} \overset{d}{\to} \mathrm{Unif}(G_1)$ and $P_{2n-1} \overset{d}{\to} \mathrm{Unif}(G_{-1})$. Under the assumption \eqref{lipschitz}, we also have $\max_{g \in G_{(-1)^n}} |\mu (P_n=g) - 1/|G_1|| \le C e^{-\tau n}$, where $C$ and $\tau$ are as in \eqref{Ctau}.
\end{cor}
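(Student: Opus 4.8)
The plan is simply to specialize Theorem~\ref{gausskuzmintheorem} and Corollary~\ref{gausskuzmincorollary} to the trivial choices $k=0$ and $A=B=[0,1]$. First I would record two elementary facts. Since $\det P_n=(-1)^n$, the matrix $P_n$ always lies in $G_{(-1)^n}$, so $\mu(P_n=g)=0$ whenever $g\notin G_{(-1)^n}$; hence it is enough to control $\mu(P_n=g)$ for $g\in G_{(-1)^n}$. Moreover, by Lemma~\ref{SL2Zmlemma} we have $|G_1|=|G_{-1}|=m|V|$, so $\mathrm{Unif}(G_{\pm 1})$ assigns mass $1/|G_1|$ to each of its points.

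For the quantitative statement under \eqref{lipschitz}, apply Theorem~\ref{gausskuzmintheorem} with $k=0$ and $A=[0,1]$, so that $\mu(A)=\lambda(A)=1$ and $P_{k+1,k+n}=P_n$, together with $B=[0,1]$, so that $\mu_{\mathrm{Gauss}}(B)=\lambda(B)=1$. The theorem then gives, for every $g\in G_{(-1)^n}$,
\[ \left| \mu(P_n=g) - \frac{1}{|G_1|} \right| \le C e^{-\tau n} \]
with $C$ and $\tau$ as in \eqref{Ctau}, and taking the maximum over $g\in G_{(-1)^n}$ yields the stated bound.

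For the weak convergence under the mere assumption $\mu\ll\lambda$, apply Corollary~\ref{gausskuzmincorollary} with the same choices $k=0$ and $A=B=[0,1]$. This yields
\[ \lim_{n\to\infty} \sup_{g\in G_{(-1)^n}} \left| \mu(P_n=g) - \frac{1}{|G_1|} \right| =0 . \]
Restricting to even $n$ gives $\mu(P_{2n}=g)\to 1/|G_1|$ for every $g\in G_1$, and restricting to odd $n$ gives $\mu(P_{2n-1}=g)\to 1/|G_1|=1/|G_{-1}|$ for every $g\in G_{-1}$. Since $G_1$ and $G_{-1}$ are finite and, on a finite set, convergence in distribution is equivalent to pointwise convergence of the point masses, these limits are precisely $P_{2n}\overset{d}{\to}\mathrm{Unif}(G_1)$ and $P_{2n-1}\overset{d}{\to}\mathrm{Unif}(G_{-1})$.

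There is essentially no obstacle here: all the substance is already contained in Theorem~\ref{gausskuzmintheorem} and Corollary~\ref{gausskuzmincorollary}, and this corollary is just the bookkeeping of inserting trivial sets, using that $P_n\in G_{(-1)^n}$, and recalling $|G_1|=|G_{-1}|$. The only point meriting an explicit remark is the standard fact that on a discrete space weak convergence is the same as convergence of each individual probability.
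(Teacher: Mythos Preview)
Your proposal is correct and matches the paper's own treatment: the paper states that Corollary~\ref{weakconvergencecorollary} follows immediately from Theorem~\ref{gausskuzmintheorem} and Corollary~\ref{gausskuzmincorollary}, which is precisely the specialization $k=0$, $A=B=[0,1]$ you carry out. The auxiliary remarks you add ($P_n\in G_{(-1)^n}$, $|G_1|=|G_{-1}|$, and weak convergence on finite sets being pointwise convergence of masses) are exactly the routine bookkeeping the paper leaves implicit.
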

\noindent By Lemma \ref{SL2Zmlemma} we thus have $(p_n,q_n) \pmod{m} \overset{d}{\to} \mathrm{Unif}(V)$, and the same holds for $(q_{n-1},q_n)$ and $(p_{n-1},p_n)$. Further, $q_n \pmod{m} \overset{d}{\to} \nu$ and $p_n \pmod{m} \overset{d}{\to} \nu$. If the density function $\frac{\mathrm{d} \mu}{\mathrm{d}\lambda}$ is Lipschitz, the same hold with exponential rate. See also \cite{DK} for the special case $\mu = \lambda$ of Corollary \ref{weakconvergencecorollary}.

Theorem \ref{gausskuzmintheorem} and Corollary \ref{gausskuzmincorollary} also imply certain mixing properties of $P_n$. Let us first recall two classical ways of quantifying mixing, see \cite{BR} for more context. Let $X_n$, $n \in \mathbb{N}$ be a sequence of random variables on a probability space $(\Omega, \mathcal{F}, P)$ taking values from a measurable space. Let $\mathcal{F}_k^{\ell}$ denote the $\sigma$-algebra generated by $X_i$, $k \le i \le \ell$, and similarly let $\mathcal{F}_k^{\infty}$ be the $\sigma$-algebra generated by $X_i$, $i \ge k$. The $\alpha$-mixing (or strong mixing) coefficients of the sequence $X_n$ are defined as\footnote{The term $\alpha$-mixing and the notation $\alpha(\ell)$ are unrelated to the random real number $\alpha \in [0,1]$.}
\[ \alpha (\ell) = \sup_{k \in \mathbb{N}} \sup_{\substack{A \in \mathcal{F}_1^k \\ B \in \mathcal{F}_{k+\ell}^{\infty}}} \left| P (A \cap B) - P (A) P (B) \right|, \qquad \ell \in \mathbb{N} , \]
whereas the $\psi$-mixing coefficients are
\[ \psi (\ell) = \sup_{k \in \mathbb{N}} \sup_{\substack{A \in \mathcal{F}_1^k, \,\, P (A)>0 \\ B \in \mathcal{F}_{k+\ell}^{\infty}, \,\, P (B)>0}} \left| \frac{P(A \cap B)}{P(A) P(B)} -1 \right|, \qquad \ell \in \mathbb{N} . \]

\begin{lem}\label{psilemma} Let $\alpha \sim \mu$ with $\mu \ll \lambda$, and assume \eqref{lipschitz}. Then the $\alpha$-mixing coefficients of the sequence $X_n=(a_n,P_n)$ satisfy $\alpha(\ell) \ll e^{-\tau \ell}$, $\ell \in \mathbb{N}$, where $\tau$ is as in \eqref{Ctau}, and the implied constant depends only on $L$ and $m$. Under the additional assumption $\frac{\mathrm{d}\mu}{\mathrm{d}\lambda} (x) \ge K>0$ for all $x \in [0,1]$ with some constant $K>0$, we also have $\psi (\ell) \ll e^{-\tau \ell}$, $\ell \in \mathbb{N}$, with an implied constant depending also on $K$.
\end{lem}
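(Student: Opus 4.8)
The plan is to derive both mixing bounds from the Gauss--Kuzmin--L\'evy estimate of Theorem \ref{gausskuzmintheorem}, by writing $\mu(A\cap B)$ and $\mu(B)$ as the \emph{same} product main term plus errors of size $e^{-\tau\ell}$, so that the main terms cancel in the difference $\mu(A\cap B)-\mu(A)\mu(B)$. First I would unravel the $\sigma$-algebras. Since each $P_i$ is a function of $a_1,\dots,a_i$, the past $\sigma$-algebra $\mathcal F_1^k=\sigma(X_1,\dots,X_k)$ equals $\mathcal A_1^k$; and as a general $A\in\mathcal A_1^k$ is a countable disjoint union of depth-$k$ cylinders, over which summing the (appropriately scaled) bounds recovers the bound for $A$, it is enough to treat one cylinder $A$, on which $P_k$ takes a fixed value $g_0$. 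On the future side, from the pairs $(a_i,P_i)$ with $i\ge k+\ell$ one reads off all $a_i$ with $i\ge k+\ell$ and the matrix $P_{k+\ell}$, hence also $P_{k+\ell-1}$ (the elementary matrix relating them being invertible modulo $m$); conversely these data determine the whole future. Therefore $\mathcal F_{k+\ell}^{\infty}=\mathcal A_{k+\ell}^{\infty}\vee\sigma(P_{k+\ell-1})$, so every $B\in\mathcal F_{k+\ell}^{\infty}$ can be written as a disjoint union $B=\bigcup_{h}\bigl(\{P_{k+\ell-1}=h\}\cap B_h\bigr)$ with $B_h\in\mathcal A_{k+\ell}^{\infty}$.

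Next I would apply Theorem \ref{gausskuzmintheorem} twice. On the cylinder $A$ one has $\{P_{k+\ell-1}=h\}\cap A=\{P_{k+1,k+\ell-1}=g_0^{-1}h\}\cap A$, so with $n=\ell-1$ the theorem bounds the error of approximating $\mu(A\cap\{P_{k+\ell-1}=h\}\cap B_h)$ by $\mu(A)\mu_{\mathrm{Gauss}}(B_h)/|G_1|$ by $C\lambda(A)\lambda(B_h)e^{-\tau(\ell-1)}$; summing over $h$ gives $\mu(A\cap B)=\mu(A)S/|G_1|+O\bigl(C\lambda(A)\,T\,e^{-\tau(\ell-1)}\bigr)$, where $S=\sum_h\mu_{\mathrm{Gauss}}(B_h)$ and $T=\sum_h\lambda(B_h)$. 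Applying the theorem again with $k$ replaced by $0$ and $n$ by $k+\ell-1$ — so that $\{P_{1,k+\ell-1}=h\}$ is the middle block and $B_h$ the tail — and summing over $h$ gives $\mu(B)=S/|G_1|+O\bigl(C\,T\,e^{-\tau(k+\ell-1)}\bigr)$. Subtracting $\mu(A)$ times the second relation from the first, the $S$-terms cancel; using $\mathrm{d}\mu/\mathrm{d}\lambda\le 1+L/2$ (immediate from \eqref{lipschitz}) and $k\ge 0$, I arrive at $|\mu(A\cap B)-\mu(A)\mu(B)|\ll\lambda(A)\,T\,e^{-\tau\ell}$, with an implied constant depending only on $L$.

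For $\alpha$-mixing this finishes the argument, since $T\le|G|$ with $|G|$ depending only on $m$ (Lemma \ref{SL2Zmlemma}) and $\lambda(A)\le 1$, so that $\alpha(\ell)\ll e^{-\tau\ell}$. For $\psi$-mixing I must, in addition, absorb $\lambda(A)\,T$ into $\mu(A)\mu(B)$. The hypothesis $\mathrm{d}\mu/\mathrm{d}\lambda\ge K$ gives $\lambda(A)\le K^{-1}\mu(A)$ at once; for $T$ I would use Lemma \ref{lebesguelemma1} to write $\lambda(\{P_{k+\ell-1}=h\}\cap B_h)\ge\tfrac12\lambda(P_{k+\ell-1}=h)\,\lambda(B_h)$ and then Lemma \ref{lebesguelemma2} to note that $\lambda(P_n=h)\ge c_m$ for a constant $c_m>0$ depending only on $m$ whenever $\{P_n=h\}$ is nonempty (the empty cells contributing nothing). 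This yields $\mu(B)\ge\tfrac12 K c_m\,T$, hence $\lambda(A)\,T\ll\mu(A)\mu(B)$ with an implied constant depending on $m$ and $K$; plugging this into the bound of the previous paragraph gives $\psi(\ell)\ll e^{-\tau\ell}$ with an implied constant depending on $L$, $m$, $K$.

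I expect the main obstacle to be conceptual rather than computational: one must notice that the future $\sigma$-algebra of $(a_n,P_n)$ properly contains $\mathcal A_{k+\ell}^{\infty}$ — it also remembers the boundary matrix $P_{k+\ell-1}$ — and then line up the two uses of Theorem \ref{gausskuzmintheorem} so that the \emph{identical} product term $S/|G_1|$ governs both $\mu(A\cap B)$ and $\mu(B)$ and cancels, upgrading an $O(1)$ discrepancy to $O(e^{-\tau\ell})$. The one quantitative input beyond Theorem \ref{gausskuzmintheorem} is the uniform-in-$n$ lower bound $\lambda(P_n=h)\ge c_m$ from Lemma \ref{lebesguelemma2}; it is exactly this that promotes the $\alpha$-mixing conclusion to $\psi$-mixing once $\mathrm{d}\mu/\mathrm{d}\lambda\ge K$.
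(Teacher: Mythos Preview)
Your argument is correct and follows essentially the same route as the paper: identify $\mathcal F_1^k$ with $\mathcal A_1^k$, decompose $B\in\mathcal F_{k+\ell}^{\infty}$ as $\bigcup_h(\{P_{k+\ell-1}=h\}\cap B_h)$ with $B_h\in\mathcal A_{k+\ell}^{\infty}$, apply Theorem~\ref{gausskuzmintheorem} once with the cylinder $A$ and once with $A=[0,1]$ so that the main terms $\mu(A)\mu_{\mathrm{Gauss}}(B_h)/|G_1|$ cancel, and then invoke Lemmas~\ref{lebesguelemma1}--\ref{lebesguelemma2} for the $\psi$-mixing upgrade. The only (harmless) departure is that for $\alpha$-mixing you bound $T=\sum_h\lambda(B_h)$ trivially by $|G|$, whereas the paper already uses Lemmas~\ref{lebesguelemma1}--\ref{lebesguelemma2} at that stage to get the sharper $T\ll\lambda(B)$; your shortcut suffices since only a uniform bound is needed for $\alpha(\ell)$.
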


\begin{remark} Exactly as in the proof of Corollary \ref{gausskuzmincorollary}, we can prove that under the sole assumption $\mu \ll \lambda$, we have $\lim_{\ell \to \infty} \alpha (\ell)=0$ without an estimate for the rate.
\end{remark}

\begin{proof}[Proof of Lemma \ref{psilemma}] Fix $k, \ell \ge 1$. Observe that $(X_1, X_2, \ldots, X_k)$ is a function of $(a_1, a_2, \ldots, a_k)$, and that $(X_{k+\ell}, X_{k+\ell+1}, \ldots)$ is a function of $(P_{k+\ell-1},a_{k+\ell}, a_{k+\ell+1}, \ldots )$. In particular, $\mathcal{F}_1^k \subseteq \mathcal{A}_1^k$, and any $B \in \mathcal{F}_{k+\ell}^{\infty}$ is of the form $B=\cup_{g \in G_{(-1)^{k+\ell-1}}} \{ P_{k+\ell-1} =g \} \cap B_g$ with $B_g \in \mathcal{A}_{k+\ell}^{\infty}$.

Let $A=\{ a_1=b_1, \ldots, a_k=b_k \}$, and let $h=\prod_{i=1}^k \left( \begin{array}{cc} 0 & 1 \\ 1 & b_i \end{array} \right) \pmod{m}$. An application of Theorem \ref{gausskuzmintheorem} shows that
\[ \mu (A \cap \{ P_{k+\ell-1}=g \} \cap B_g) = \mu (A \cap \{ P_{k+1, k+\ell-1}=h^{-1} g \} \cap B_g ) = \frac{\mu (A) \mu_{\mathrm{Gauss}}(B_g)}{|G_1|} +O( \lambda(A) \lambda (B_g) e^{-\tau \ell} ) . \]
By $\sigma$-additivity, the same holds with any $A \in \mathcal{A}_1^k$. In particular,
\[ \mu (\{ P_{k+\ell-1}=g \} \cap B_g) = \frac{\mu_{\mathrm{Gauss}}(B_g)}{|G_1|} +O(\lambda (B_g) e^{-\tau \ell} ) . \]
The previous two formulas and the fact that $\mu (A) \ll \lambda (A)$ yield
\[ \left| \mu (A \cap \{ P_{k+\ell-1}=g \} \cap B_g) - \mu(A) \mu (\{ P_{k+\ell-1}=g \} \cap B_g) \right| \ll \lambda (A) \lambda (B_g) e^{-\tau \ell} . \]
It is enough to sum over those $g$ for which $\lambda (P_{k+\ell-1}=g)>0$, as otherwise $\mu (P_{k+\ell-1}=g)=0$. Thus
\[ \left| \mu (A \cap B) - \mu(A) \mu (B) \right| \ll \lambda(A) e^{-\tau \ell} \sum_{\substack{g \in G_{(-1)^{k+\ell-1}} \\ \lambda (P_{k+\ell-1}=g)>0}} \lambda (B_g) . \]
Lemmas \ref{lebesguelemma1} and \ref{lebesguelemma2} show that each term on the right-hand side satisfies
\[ \lambda (B_g) \le 2 \frac{\lambda (\{ P_{k+\ell-1}=g \} \cap B_g)}{\lambda (P_{k+\ell-1}=g)} \ll \lambda (\{ P_{k+\ell-1}=g \} \cap B_g) . \]
Hence
\[ \sum_{\substack{g \in G_{(-1)^{k+\ell-1}} \\ \lambda (P_{k+\ell-1}=g)>0}} \lambda (B_g) \ll \lambda (B), \]
and we obtain $|\mu (A \cap B) - \mu(A) \mu (B)| \ll \lambda(A) \lambda(B) e^{-\tau \ell}$. In particular, $\alpha (\ell) \ll e^{-\tau \ell}$. Under the additional assumption $\frac{\mathrm{d}\mu}{\mathrm{d}\lambda} (x) \ge K>0$ we have $\lambda(A) \lambda(B) \ll \mu(A) \mu(B)$, and $\psi (\ell) \ll e^{-\tau \ell}$ follows.
\end{proof}

\subsection{Invariance principles for $P_n$}

We now compute the variance of the sum $\sum_{n=M+1}^{M+N} f(P_n)$, and then prove Theorem \ref{Gtheorem}.
\begin{lem}\label{variancelemma} Fix an integer $m \ge 2$, and let $f: G \to \mathbb{R}$ be arbitrary.
\begin{enumerate}
\item[(i)] The right-hand side of \eqref{sigmadef} is finite and nonnegative.

\item[(ii)] Let $\alpha \sim \mu$ with $\mu \ll \lambda$, and assume \eqref{lipschitz}. For any integers $M \ge 0$ and $N \ge 1$,
\[ \mathbb{E} \left( \sum_{n=M+1}^{M+N} f(P_n) - E_f N \right)^2 = \sigma_f^2 N + O( \log (N+1)) \]
with an implied constant depending only on $L$ and $f$.
\end{enumerate}
\end{lem}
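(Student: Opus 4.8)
The plan is to deduce both parts of the lemma from one variance computation: I would first show that the two series in \eqref{sigmadef} converge absolutely, then prove (ii) in the sharper form $\sigma_f^2 N + O(1)$, and finally obtain the nonnegativity of $\sigma_f^2$ in (i) by dividing the variance by $N$ and letting $N\to\infty$. For the absolute convergence, note that $\mu_{\mathrm{Gauss}}$ has a Lipschitz density, so Corollary \ref{weakconvergencecorollary} gives $\mathbb{P}(P_n=g)=1/|G_1|+O(e^{-\tau n})$ for all $g\in G_{(-1)^n}$. Conditioning on the value of $P_n$ and using the independence of $U_{\pm1}$ and $\alpha$,
\[ \mathbb{E}\bar f(U_1)\bar f(U_1P_n)=\sum_{g\in G_{(-1)^n}}\mathbb{P}(P_n=g)\,\frac{1}{|G_1|}\sum_{h\in G_1}\bar f(h)\bar f(hg). \]
Replacing $\mathbb{P}(P_n=g)$ by $1/|G_1|+O(e^{-\tau n})$, the main term contains the factor $\sum_{h\in G_1}\bar f(h)\sum_{g\in G_{(-1)^n}}\bar f(hg)$, whose inner sum vanishes since $hG_{(-1)^n}=G_{(-1)^n}$ and $\bar f$ has zero mean on $G_{(-1)^n}$; hence $|\mathbb{E}\bar f(U_1)\bar f(U_1P_n)|=O(e^{-\tau n})$, and likewise with $U_{-1}$, so both series converge absolutely. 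This proves the first assertion of (i) and makes $\sigma_f^2$ a well-defined real number.

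For (ii), fix $\mu\ll\lambda$ obeying \eqref{lipschitz}. Writing $f(x)=\bar f(x)+\mathbb{E}f(U_{\pm1})$ for $x\in G_{\pm1}$, we have $f(P_n)-E_f=\bar f(P_n)+\varepsilon_n$ with $\varepsilon_n=\mathbb{E}f(U_{(-1)^n})-E_f$ satisfying $\varepsilon_{n+1}=-\varepsilon_n$ and $|\varepsilon_n|\ll1$, so $\sum_{n=M+1}^{M+N}(f(P_n)-E_f)=\sum_{n=M+1}^{M+N}\bar f(P_n)+r_{M,N}$ with $r_{M,N}=O(1)$. Also $\mathbb{E}\bar f(P_n)=O(e^{-\tau n})$ by Corollary \ref{weakconvergencecorollary} (using $\sum_{G_{(-1)^n}}\bar f=0$ again), so $\mathbb{E}\sum_{n=M+1}^{M+N}\bar f(P_n)=O(1)$. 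Expanding $(\sum\bar f(P_n)+r_{M,N})^2$, it therefore suffices to prove that $\mathbb{E}\bigl(\sum_{n=M+1}^{M+N}\bar f(P_n)\bigr)^2=\sigma_f^2N+O(1)$ uniformly in $M\ge0$. Expanding this square, the diagonal is handled by $\mathbb{E}\bar f(P_n)^2=\mathbb{E}\bar f(U_{(-1)^n})^2+O(e^{-\tau n})$ (Corollary \ref{weakconvergencecorollary}), which summed over $M<n\le M+N$, where half the indices fall in each parity class, contributes $\tfrac N2\bigl(\mathbb{E}\bar f(U_1)^2+\mathbb{E}\bar f(U_{-1})^2\bigr)+O(1)$, i.e.\ the first and third terms of \eqref{sigmadef}.

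The off-diagonal sum $2\sum_{M<n_1<n_2\le M+N}\mathbb{E}\bar f(P_{n_1})\bar f(P_{n_2})$ is the heart of the matter. Fix $n_1<n_2$, put $d=n_2-n_1$, and write $P_{n_2}=P_{n_1}P_{n_1+1,n_2}$. Applying Theorem \ref{gausskuzmintheorem} with $k=0$, $A=[0,1]$, middle block $P_{n_1}$ (of length $n_1$) and future event $B=\{P_{n_1+1,n_2}=q\}\in\mathcal{A}_{n_1+1}^{n_2}$ gives
\[ \mathbb{P}(P_{n_1}=g,\,P_{n_1+1,n_2}=q)=\frac{1}{|G_1|}\mu_{\mathrm{Gauss}}(P_{n_1+1,n_2}=q)+O\bigl(\lambda(P_{n_1+1,n_2}=q)\,e^{-\tau n_1}\bigr). \]
Substituting into $\mathbb{E}\bar f(P_{n_1})\bar f(P_{n_2})=\sum_{g,q}\mathbb{P}(P_{n_1}=g,P_{n_1+1,n_2}=q)\bar f(g)\bar f(gq)$, using the $T$-invariance of $\mu_{\mathrm{Gauss}}$ to replace $P_{n_1+1,n_2}$ by $P_d$ in distribution, and averaging over $g\in G_{(-1)^{n_1}}$, the main term collapses to $\mathbb{E}\bar f(U_{(-1)^{n_1}})\bar f(U_{(-1)^{n_1}}P_d)$ --- exactly the $d$-th summand of the series in \eqref{sigmadef} corresponding to the parity of $n_1$ --- with error $O(e^{-\tau n_1})$. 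A second application of Theorem \ref{gausskuzmintheorem}, now with $k=n_1$, $A=\{P_{n_1}=g\}$, middle block $P_{n_1+1,n_1+d}$ and $B=[0,1]$, has vanishing main term (once more because $\sum_{G_{(-1)^{n_2}}}\bar f=0$) and yields $|\mathbb{E}\bar f(P_{n_1})\bar f(P_{n_2})|=O(e^{-\tau d})$. Combining, $|\mathbb{E}\bar f(P_{n_1})\bar f(P_{n_2})-\gamma^{(n_1)}_d|=O(e^{-\tau\max(n_1,d)})$ for the aforementioned summand $\gamma^{(n_1)}_d$; since $\sum_{i\ge1}\sum_{d\ge1}e^{-\tau\max(i,d)}<\infty$, these errors contribute only $O(1)$ over all pairs. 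What remains, $2\sum_{n_1=M+1}^{M+N-1}\sum_{d=1}^{M+N-n_1}\gamma^{(n_1)}_d$, is evaluated by extending each inner sum to $d=\infty$ (costing $O(1)$ in total, as $\gamma^{(n_1)}_d=O(e^{-\tau d})$ by the first paragraph) and then counting parities among $n_1=M+1,\dots,M+N-1$; this produces $N\bigl(\sum_{n\ge1}\mathbb{E}\bar f(U_1)\bar f(U_1P_n)+\sum_{n\ge1}\mathbb{E}\bar f(U_{-1})\bar f(U_{-1}P_n)\bigr)+O(1)$, i.e.\ $N$ times the second and fourth terms of \eqref{sigmadef}.

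Adding the diagonal and off-diagonal contributions recovers all four terms of \eqref{sigmadef}, so $\mathbb{E}\bigl(\sum_{n=M+1}^{M+N}\bar f(P_n)\bigr)^2=\sigma_f^2N+O(1)$; combined with the reduction above this proves (ii), a fortiori with the stated error $O(\log(N+1))$. Since this quantity is nonnegative for every $N$, dividing by $N$ and letting $N\to\infty$ forces $\sigma_f^2\ge0$, which completes (i). The main obstacle is the off-diagonal analysis: one must recognize that, even though $\alpha\sim\mu$ is an arbitrary absolutely continuous law, the ``future'' block $P_{n_1+1,n_2}$ behaves as though $\alpha$ had the Gauss distribution, so that the limiting covariances are precisely the summands appearing in \eqref{sigmadef}; and one must use the sharper covariance bound $O(e^{-\tau\max(n_1,d)})$ --- rather than the weaker $O(e^{-\tau d})$ --- to keep the accumulated error bounded uniformly in the starting index $M$.
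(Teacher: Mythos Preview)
Your proof is correct and follows the same overall architecture as the paper's: show absolute convergence of the series in \eqref{sigmadef} via Corollary \ref{weakconvergencecorollary}, expand the square, handle the diagonal via Corollary \ref{weakconvergencecorollary}, handle the off-diagonal via Theorem \ref{gausskuzmintheorem}, and read off $\sigma_f^2\ge 0$ from the variance. The one genuine difference is in the off-diagonal bookkeeping. The paper introduces a cutoff $R$, uses the $\alpha$-mixing bound from Lemma \ref{psilemma} to control the terms with gap $\ell\ge R$ (contributing $O(Ne^{-\tau R})$), and Theorem \ref{gausskuzmintheorem} for $\ell<R$ (contributing $O(R)$); optimizing over $R$ gives the stated $O(\log(N+1))$. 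You instead apply Theorem \ref{gausskuzmintheorem} twice to every pair $(n_1,n_2)$ --- once with the ``cut'' at $n_1$ and once at $n_1+d$ --- and combine them into the pointwise bound $|\mathbb{E}\bar f(P_{n_1})\bar f(P_{n_2})-\gamma_d^{(n_1)}|=O(e^{-\tau\max(n_1,d)})$, which sums to $O(1)$ uniformly in $M$. This bypasses Lemma \ref{psilemma} entirely and yields the sharper error $O(1)$; the paper's $R$-splitting is simpler but loses a logarithm. Your explicit reduction from $f$ to $\bar f$ (noting that $r_{M,N}$ is deterministic, so the cross term is $2r_{M,N}\,\mathbb{E}\sum\bar f(P_n)=O(1)$) is also slightly more careful than the paper's bare ``we may assume $\mathbb{E}f(U_{\pm1})=0$''.
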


\begin{proof} \textbf{(i)} Let $\alpha \sim \mu_{\mathrm{Gauss}}$ and $U_{\pm 1} \sim \mathrm{Unif} (G_{ \pm 1})$ be independent random variables. By Corollary \ref{weakconvergencecorollary},
\[ \left| \mathbb{E} (\bar{f}(U_{\pm 1}) \bar{f}(U_{\pm 1} P_n)) \right| = \left| \sum_{\substack{g \in G_{\pm 1} \\ h \in G_{(-1)^n}}} \bar{f}(g) \bar{f}(gh) \frac{\mu_{\mathrm{Gauss}} (P_n=h)}{|G_1|} \right| = \left| \sum_{\substack{g \in G_{\pm 1} \\ h \in G_{(-1)^n}}} \bar{f}(g) \bar{f}(gh) \frac{1}{|G_1|^2} \right| + O(e^{-\tau n}) . \]
Summing over $h \in G_{(-1)^n}$ and using that $\sum_{w \in G_{\pm 1}} \bar{f}(w)=0$ we see that the last sum vanishes. Hence $|\mathbb{E} (\bar{f}(U_{\pm 1}) \bar{f}(U_{\pm 1}P_n))| \ll e^{-\tau n}$, so both series in \eqref{sigmadef} are absolutely convergent. The fact that the right-hand side of \eqref{sigmadef} is nonnegative will follow from (ii).\\

\noindent\textbf{(ii)} We may assume that $\mathbb{E} (f(U_{\pm 1}))=0$. Expanding the square leads to
\begin{equation}\label{expandsquare}
\mathbb{E} \left( \sum_{n=M+1}^{M+N} f(P_n) \right)^2 = \sum_{n=M+1}^{M+N} \mathbb{E} (f(P_n)^2) + 2 \sum_{\ell =1}^{N-1} \sum_{n=M+1}^{M+N-\ell} \mathbb{E} (f(P_n) f(P_{n+\ell})) .
\end{equation}
Corollary \ref{weakconvergencecorollary} shows that $\mathbb{E} (f(P_n)^2) = \mathbb{E} (f(U_{(-1)^n})^2) + O(e^{-\tau n})$, hence the first sum in \eqref{expandsquare} is
\[ \sum_{n=M+1}^{M+N} \mathbb{E} (f(P_n)^2) = \left( \frac{1}{2} \mathbb{E} (f(U_1)^2) + \frac{1}{2} \mathbb{E} (f(U_{-1})^2) \right) N + O(1) . \]
Let $1 \le R \le N-1$ be a parameter to be chosen, and consider the second sum in \eqref{expandsquare}. Lemma \ref{psilemma} implies that the $\alpha$-mixing coefficients of the sequence $P_n$ satisfy $\alpha (\ell) \ll e^{-\tau \ell}$, thus
\[ |\mathbb{E} (f(P_n) f(P_{n+\ell})) -\mathbb{E} (f(P_n)) \mathbb{E} (f(P_{n+\ell}))| \ll e^{-\tau \ell} . \]
Here $|\mathbb{E} (f(P_{n+\ell}))| \ll e^{-\tau (n+\ell)}$ and $|\mathbb{E} (f(P_n))| \ll 1$. Hence $|\mathbb{E} (f(P_n) f(P_{n+\ell}))| \ll e^{-\tau \ell}$, and
\[ \left| \sum_{\ell =R}^{N-1} \sum_{n=M+1}^{M+N-\ell} \mathbb{E} (f(P_n) f(P_{n+\ell})) \right| \ll \sum_{\ell=R}^{N-1} N e^{-\tau \ell} \ll N e^{-\tau R} . \]
Now let $1 \le \ell \le R$, and consider
\[ \mathbb{E} (f(P_n) f(P_{n+\ell})) = \sum_{\substack{g \in G_{(-1)^n} \\ h \in G_{(-1)^{\ell}}}} f(g) f(gh) \mu \left( \{ P_n=g \} \cap \{ P_{n+1,n+\ell} =h \} \right) . \]
Theorem \ref{gausskuzmintheorem} shows that here $\mu( \{ P_n=g \} \cap \{ P_{n+1,n+\ell} =h \} ) = \mu_{\mathrm{Gauss}} ( P_\ell =h) / |G_1| +O(e^{-\tau n})$. Therefore
\[ \mathbb{E} (f(P_n) f(P_{n+\ell})) = \sum_{\substack{g \in G_{(-1)^n} \\ h \in G_{(-1)^{\ell}}}} f(g) f(gh) \frac{\mu_{\mathrm{Gauss}}(P_{\ell} =h)}{|G_1|} + O(e^{-\tau n}) = \mathbb{E} (f(U_{(-1)^n}) f(U_{(-1)^n} P_{\ell})) + O(e^{-\tau n}) , \]
and
\[ \begin{split} \sum_{\ell =1}^R \sum_{n=M+1}^{M+N-\ell} \mathbb{E} (f(P_n) f(P_{n+\ell})) &= \sum_{\ell =1}^R \frac{N-\ell}{2} \left( \mathbb{E} (f(U_1) f(U_1 P_{\ell})) + \mathbb{E} (f(U_{-1}) f(U_{-1} P_{\ell})) \right) + O(R) \\ &= \frac{N}{2} \sum_{\ell =1}^{\infty} \left( \mathbb{E} (f(U_1) f(U_1 P_{\ell})) + \mathbb{E} (f(U_{-1}) f(U_{-1} P_{\ell})) \right) +O(R +N e^{-\tau R}). \end{split} \]
The previous estimates for the right-hand side of \eqref{expandsquare} and the definition \eqref{sigmadef} of $\sigma_f$ lead to
\[ \mathbb{E} \left( \sum_{n=M+1}^{M+N} f(P_n) \right)^2 = \sigma_f^2 N +O(R +N e^{-\tau R}) , \]
and the claim follows by choosing $R \approx \log (N+1)$.
\end{proof}

\begin{proof}[Proof of Theorem \ref{Gtheorem}] Claim (i) was proved in Lemma \ref{variancelemma}. Now let $\alpha \sim \mu$ with $\mu \ll \lambda$, and assume \eqref{lipschitz}. Lemmas \ref{psilemma} and \ref{variancelemma} show that the sequence of random variables $f(P_n)$ is $\alpha$-mixing with exponential rate, and satisfy $\mathbb{E} ( \sum_{n=M+1}^{M+N} f(P_n) - E_f N)^2 = \sigma_f^2 N+O(\log (N+1))$ uniformly in $M \ge 0$. Assuming $\sigma_f>0$, a general result of Philipp and Stout \cite[Theorem 7.1]{PS} on partial sums of nonstationary $\alpha$-mixing random variables shows that without changing its distribution, the process $\sum_{1 \le n \le t} f(P_n)$ can be redefined on a richer probability space so that $\sum_{1 \le n \le t} f(P_n) - E_f t = \sigma_f W(t) +O(t^{1/2-\eta})$ a.s.\ with a universal constant $\eta>0$. This proves (iii) in the case $\sigma_f >0$.

As Philipp and Stout assume $\sigma_f>0$ throughout their treatise, for the sake of completeness we include a proof of (iii) in the case $\sigma_f=0$, i.e.\ when $\mathbb{E} ( \sum_{n=M+1}^{M+N} f(P_n) - E_f N)^2 \ll \log (N+1)$ uniformly in $M \ge 0$. Fix a small $\varepsilon>0$. An application of the Chebyshev inequality gives
\[ \mu \left( \left| \sum_{n=1}^{N^2} f(P_n) - E_f N^2 \right| \ge N^{1/2+\varepsilon} \right) \ll \frac{\log (N+1)}{N^{1+2\varepsilon}} . \]
Since $f(P_n)$ is bounded and $\alpha$-mixing with exponential rate, for all $p>2$ we have $\mathbb{E} (|\sum_{n=M+1}^{M+N} f(P_n) - E_f N|^p) \ll N^{p/2}$ uniformly in $M \ge 0$ and $N \ge 1$ \cite{RI}. The Erd\H{o}s--Stechkin inequality \cite{MOR} strengthens this to $\mathbb{E} (\max_{1 \le k \le N} |\sum_{n=M+1}^{M+k} f(P_n) - E_f k|^p) \ll N^{p/2}$. Choosing a suitably large $p>2$ thus leads to
\[ \mu \left( \max_{1 \le k \le 2N} \left| \sum_{n=N^2 +1}^{N^2+k} f(P_n) - E_f k \right| \ge N^{1/2+\varepsilon} \right) \ll \frac{N^{p/2}}{N^{p(1/2+\varepsilon)}} \ll \frac{1}{N^2} . \]
An application of the Borel--Cantelli lemma then shows that
\[  \left| \sum_{n=1}^{N^2} f(P_n) - E_f N^2 \right| \ll N^{1/2+\varepsilon} \quad \textrm{and} \quad  \max_{1 \le k \le 2N} \left| \sum_{n=N^2 +1}^{N^2+k} f(P_n) - E_f k \right| \ll N^{1/2+\varepsilon} \quad \textrm{for a.e. } \alpha . \]
In particular, $|\sum_{n=1}^N f(P_n) - E_f N| \ll N^{1/4+\varepsilon}$ for a.e.\ $\alpha$. This proves (iii) in the case $\sigma_f=0$.

The almost sure approximation by a Wiener process in part (iii) immediately implies the functional CLT under the assumption \eqref{lipschitz}. See Peligrad \cite{PE} for a direct proof of the functional CLT under even weaker mixing assumptions. Exactly as in the proof of Corollary \ref{gausskuzmincorollary}, we can easily remove assumption \eqref{lipschitz} on the density from the functional CLT. This proves (ii).
\end{proof}

\section{Limit laws for the local discrepancy}\label{limitlawsection}

We rely on an explicit formula of Ro\c cadas and Schoissengeier \cite{SCH1}, who showed that for any irrational $\alpha \in [0,1]$, any $r \in (0,1)$ and any $k \ge 0$,
\begin{equation}\label{schoissengeier}
\begin{split} \max_{0 \le N < q_{k+1}} S_{N,r} (\alpha) &= \sum_{\substack{j=0 \\ j \textrm{ even}}}^k \left\{ q_j r \right\} \left( \left( 1-\{ q_j r \} \right) a_{j+1} + \{ q_{j+1} r \} - \{ q_{j-1} r \} \right) +O(1), \\ \min_{0 \le N < q_{k+1}} S_{N,r} (\alpha) &= -\sum_{\substack{j=0 \\ j \textrm{ odd}}}^k \left\{ q_j r \right\} \left( \left( 1-\{ q_j r \} \right) a_{j+1} + \{ q_{j+1} r \} - \{ q_{j-1} r \} \right) +O(1) \end{split}
\end{equation}
with universal implied constants. We give the proof of Theorem \ref{discrepancytheorem} after a preparatory lemma. Let $\theta_m$ be as in \eqref{thetam}, and recall that $\gamma$ denotes the Euler--Mascheroni constant.
\begin{lem}\label{Xjlemma} Let $\alpha \sim \mu$ with $\mu \ll \lambda$, and assume that $\frac{\mathrm{d}\mu}{\mathrm{d}\lambda}$ is positive and Lipschitz. Let $l/m \in (0,1)$ be a reduced fraction, and let
\[ X_j = \left\{ q_{j-1} \frac{l}{m} \right\} \left( 1- \left\{ q_{j-1} \frac{l}{m} \right\} \right) a_j . \]
Then
\[ \left( \frac{\sum_{j=1}^k X_{2j-1} - A_k}{s_k}, \frac{\sum_{j=1}^k X_{2j} - A_k}{s_k} \right) \overset{d}{\to} \mathrm{Stab}(1,1) \otimes \mathrm{Stab} (1,1) , \]
where $A_k = \frac{1}{6 \log 2} k \log k - \frac{1}{6 \log 2} \left( 6 \theta_m + \gamma+\log \frac{12 \log 2}{\pi} \right) k$ and $s_k = \frac{\pi}{12 \log 2} k$.
\end{lem}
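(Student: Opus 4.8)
The plan is to realize each $X_j$ as a bounded random weight times a heavy-tailed partial quotient, and then prove a bivariate $\alpha=1$ stable limit theorem for the weighted sums by reducing, via a big-block/small-block argument on the $\psi$-mixing sequence $(a_n,P_n)$, to the classical i.i.d.\ stable limit theorem.

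First I would record the structure of $X_j$. Writing $c(a)=\{al/m\}(1-\{al/m\})$, we have $X_j=c(q_{j-1}\bmod m)\,a_j$, and since $q_{j-1}\bmod m$ is the $(2,1)$-entry of $P_j$, the variable $X_j$ is a function of $(a_j,P_j)$. Hence by Lemma \ref{psilemma} (whose $\psi$-mixing conclusion applies because $\mathrm{d}\mu/\mathrm{d}\lambda$ is bounded below) the real sequence $(X_j)$, and therefore the $\mathbb{R}^2$-valued sequence $Y_j:=(X_{2j-1},X_{2j})$ — a block of two consecutive terms of it — is $\psi$-mixing with exponential rate. Since $\nu$ is invariant under multiplication by $l\in\mathbb{Z}_m^*$ (as used in the proof of Theorem \ref{kestentheorem}), none of $s_k$, $A_k$, $\theta_m$ depends on $l$, so it is harmless to take $l=1$.

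The second ingredient is the tail behaviour of $X_j$. By Corollary \ref{weakconvergencecorollary} the distribution of $q_{j-1}\bmod m$ tends to $\nu$, and by the Gauss--Kuzmin--L\'evy estimate of Theorem \ref{gausskuzmintheorem} together with the bounded-distortion Lemma \ref{lebesguelemma1} one checks that the conditional tail is $\mu(a_j>n\mid q_{j-1}\equiv a)=\tfrac{1}{n\log 2}(1+o(1))$ as $j\to\infty$. Averaging over $a$ with weights $\nu_a$ and invoking $\sum_{a\in\mathbb{Z}_m}\nu_a c(a)=\sum_a\nu_a\{a/m\}(1-\{a/m\})=1/6$, which is exactly formula \eqref{sumovera}, yields
\[ \mu(X_j>t)=\frac{1}{6\,t\log 2}\,(1+o(1))\qquad\text{as }t,j\to\infty\text{ with }\log t=o(j), \]
and the same with $X_j$ replaced by either coordinate of $Y_j$. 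Because $X_{2j-1}$ is driven by $a_{2j-1}$ and $X_{2j}$ by $a_{2j}$, two partial quotients that decorrelate by bounded distortion, the joint tail $\mu(X_{2j-1}>t,\,X_{2j}>t)$ is $O(t^{-2})=o(t^{-1})$. Thus $Y_j$ is bivariate regularly varying of index $1$ with L\'evy (spectral) measure supported on the two positive coordinate half-axes, each carrying the density $\tfrac{1}{6\log 2}\,u^{-2}$.

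Finally I would run the limit theorem. Partition $\{1,\dots,k\}$ into big blocks of length $\asymp k/\log k$ separated by gaps of length $\asymp\log k$. Exponential $\psi$-mixing makes the big-block contributions independent up to $o(s_k)$; inside each block, Theorem \ref{gausskuzmintheorem} lets one replace $Y_j$, again up to $o(s_k)$, by a stationary $\psi$-mixing sequence with the tail above; truncating at a level $\asymp s_k$ and estimating the truncated (bounded, weakly dependent) part by a second-moment bound as in Lemma \ref{variancelemma} shows it fluctuates only on the scale $\sqrt{k}=o(s_k)$; and the gaps contribute $o(s_k)$. What is left is the classical multivariate stable limit theorem for a triangular array with the above bivariate regular variation, whose limit is the index-$1$ stable law with L\'evy measure on the two positive axes; since a L\'evy measure concentrated on the coordinate axes produces independent coordinates, this limit is $\mathrm{Stab}(1,1)\otimes\mathrm{Stab}(1,1)$. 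The normalization $s_k=\tfrac{\pi}{2}\cdot\tfrac{1}{6\log 2}\,k=\tfrac{\pi}{12\log 2}k$ is the scale matching the standard parametrization $\exp(-|x|(1+\tfrac{2i}{\pi}\mathrm{sgn}(x)\log|x|))$, and the centering is obtained by combining the truncated-mean asymptotics $\sum_{j\le k}\mathbb{E}[X_j\,\mathds{1}_{\{X_j\le s_k\}}]=\tfrac{k(\log s_k-1)}{6\log 2}-\tfrac{k}{\log 2}\,\theta_m+o(k)$ — with $\theta_m=\sum_a\nu_a c(a)\log c(a)$ equal to \eqref{thetam} by the $\mathbb{Z}_m^*$-invariance of $\nu$ — with the standard shift $\tfrac{1-\gamma}{6\log 2}k$ coming from the imaginary part of the index-$1$ L\'evy--Khintchine exponent; after substituting $\log s_k=\log k+\log\tfrac{\pi}{12\log 2}$ these assemble into exactly $A_k$.

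The main obstacle is this last step. Making the big-block/small-block reduction rigorous needs care because $(Y_j)$ is not stationary, only asymptotically so; and — more delicately — the centering must be tracked through the truncation and the reduction to the standard $\mathrm{Stab}(1,1)$ parametrization so that all four terms $\tfrac{1}{6\log2}k\log k$, $\theta_m$, $\gamma$ and $\log\tfrac{12\log 2}{\pi}$ come out with the right coefficients. The bivariate formulation, needed for Theorem \ref{discrepancytheorem}, adds little, since the structure forcing a product limit is already visible in the joint tail.
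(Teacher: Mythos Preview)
Your strategy is genuinely different from the paper's. The paper does not use big-block/small-block and regular variation; it computes the joint characteristic function directly. Fixing $(x_1,x_2)\neq(0,0)$ and setting $Y_j=(x_1/k)X_{2j-1}+(x_2/k)X_{2j}$, it invokes a result of Heinrich \cite[Lemma~1]{HE} for $\psi$-mixing sequences to write
\[
\mathbb{E}\exp\Big(i\sum_{j=1}^k Y_j\Big)=\exp\Big(\sum_{j=1}^k\mathbb{E}\big(e^{iY_j}-1\big)\Big)+O\!\left(\frac{(\log k)^2}{k}\right),
\]
then computes each $\mathbb{E}(e^{iY_j}-1)$ by Theorem~\ref{gausskuzmintheorem}: this replaces $(P_{2j-2},a_{2j-1},a_{2j})$ by $(\mathrm{Unif}(G_1),a_1,a_2)$ under $\mu_{\mathrm{Gauss}}$, and an explicit two-variable asymptotic from \cite[Lemma~20]{BO} for $\sum_{b_1,b_2}(e^{i(t_1 b_1+t_2(b_1)b_2)}-1)\mu_{\mathrm{Gauss}}(a_1=b_1,a_2=b_2)$ is averaged over $G_1$ using \eqref{sumovera} and the definition of $\theta_m$. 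All four constants $\tfrac{1}{6\log 2}k\log k$, $\theta_m$, $\gamma$, $\log\tfrac{12\log 2}{\pi}$ then drop out of the real and imaginary parts of the exponent with no separate truncation bookkeeping. Your route can in principle be made to work, but it trades this single explicit computation for several estimates that must be synchronized; the paper's approach is shorter precisely because the index-$1$ centering lives naturally in the imaginary part of the characteristic function.

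There is also a concrete error in your sketch. You write that truncating at a level $\asymp s_k$ and applying a second-moment bound ``as in Lemma~\ref{variancelemma}'' shows the truncated part fluctuates on the scale $\sqrt{k}$. This is false: since $\mu(X_j>t)\sim(6t\log 2)^{-1}$, the truncated second moment is $\mathbb{E}[X_j^2\mathds{1}_{\{X_j\le s_k\}}]\asymp s_k\asymp k$, so the (weakly dependent) sum of $k$ such terms has variance $\asymp k^2$ and standard deviation $\asymp k\asymp s_k$, not $\sqrt{k}$. (Lemma~\ref{variancelemma} does not apply anyway, as it is stated for bounded functions of $P_n$ alone.) The standard repair is to truncate at $\varepsilon s_k$, obtaining fluctuations $O(\sqrt{\varepsilon}\,s_k)$, and let $\varepsilon\downarrow 0$ after $k\to\infty$; but then the centering has to be tracked through the $\varepsilon$-dependent truncation and through the large-jump compound-Poisson part separately, which is exactly the ``delicate'' step you flag. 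Until that is carried out, the proposal has a genuine gap at the point where the truncated contribution is declared negligible.
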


\begin{proof} Fix real numbers $x_1, x_2$ such that $(x_1, x_2) \neq (0, 0)$, and let $Y_j=(x_1/k) X_{2j-1} + (x_2/k) X_{2j}$, $1 \le j \le k$. Throughout, implied constants are allowed to depend on $x_1, x_2$. The random variable $Y_j$ is a function of $(a_{2j-1}, a_{2j}, P_{2j-1}, P_{2j})$, therefore by Lemma \ref{psilemma} the sequence $Y_j$ is $\psi$-mixing with exponential rate.

Heinrich \cite[Lemma 1]{HE} proved the following result for a sequence of random variables $Y_1, Y_2, \ldots$ with $\psi$-mixing coefficients $\psi (\ell)$, $\ell \ge 1$. Let
\[ \sigma_k = \max_{1 \le j \le k} \mathbb{E} \left( \left| e^{i Y_j} -1 \right| \right) \qquad  \textrm{and} \qquad \Sigma_k = \sum_{j=1}^k \mathbb{E} \left( \left| e^{i Y_j} -1 \right| \right) . \]
Assuming
\begin{equation}\label{heinrichconditions}
\begin{split} \sigma_k &\le \min \left\{ \frac{1}{2 (1+\psi (1))^2 (2m_0+1)^2}, \frac{1}{2 (1+\psi (1)) (2 p_0 m_0 +1)} \right\} , \\ \sigma_k \Sigma_k &\le \frac{1}{2} \left( 9+\sum_{\ell=1}^{m_0} \psi (\ell) \right)^{-1} \sum_{j=1}^k \mathbb{E} \left( 1-\cos Y_j \right) \end{split}
\end{equation}
with some integers $p_0 \ge 2$ and $m_0 \ge 1$, we have
\begin{equation}\label{heinrichclaim}
\begin{split} \Bigg| \mathbb{E} \Bigg( &\exp \Bigg( i \sum_{j=1}^k Y_j \Bigg) \Bigg) - \exp \Bigg( \sum_{j=1}^k \mathbb{E} \left( e^{i Y_j} -1 \right) \Bigg) \Bigg| \\ &\le \left( 9+\sum_{\ell=1}^{m_0} \psi (\ell) \right) \sigma_k \Sigma_k \exp \Bigg( - \frac{1}{2} \sum_{j=1}^k \mathbb{E} (1-\cos Y_j) \Bigg) + \left( 2^{-p_0} + (6+\psi (1)) \psi (m_0) \right) \Sigma_k . \end{split}
\end{equation}

One readily checks that if $x_1 \neq 0$, then with a suitable constant $B>0$,
\[ \begin{split} \mathbb{E} (1-\cos Y_j) &\ge \mu \left( \frac{\pi}{2} \le |Y_j| \le \frac{3\pi}{2} \right) \\ &\gg \mu \left( \{ q_{2j-2} \equiv 1 \pmod{m} \} \cap \left\{ \frac{B}{2}k \le a_{2j-1} \le Bk \right\} \cap \{ a_{2j}=1 \} \right) \\ &\gg \mu_{\mathrm{Gauss}} \left( \frac{B}{2} k \le a_{2j-1} \le Bk \right) \gg \frac{1}{k} . \end{split} \]
A similar argument shows that $\mathbb{E}(1-\cos Y_j) \gg 1/k$ holds in the case $x_1=0$, $x_2 \neq 0$ as well. Further, we have
\[ \mathbb{E} \left( |e^{iY_j}-1| \right) \le \mathbb{E} \left( \min \{ |Y_j|, 2 \} \right) \ll \mathbb{E} \left( \min \left\{ \frac{a_{2j-1}+a_{2j}}{k}, 1 \right\} \right) \ll \frac{\log k}{k} , \]
which shows that $\sigma_k \ll (\log k)/k$ and $\Sigma_k \ll \log k$. Using the fact that $\sum_{\ell=1}^{\infty} \psi (\ell) < \infty$, we see that conditions \eqref{heinrichconditions} are satisfied with $p_0=m_0 \approx \sqrt{k/\log k}$. The estimate \eqref{heinrichclaim} thus yields
\begin{equation}\label{heinrich}
\mathbb{E} \Bigg( \exp \Bigg( i \sum_{j=1}^k Y_j \Bigg) \Bigg) = \exp \Bigg( \sum_{j=1}^k \mathbb{E} (e^{i Y_j}-1) \Bigg) + O \left( \frac{(\log k)^2}{k} \right) .
\end{equation}
Here $\sum_{j \ll \log k} \mathbb{E} (e^{i Y_j}-1) =O((\log k)^2/k)$, hence it will be enough to consider the terms $j \gg \log k$.

We can express $Y_j$ as $Y_j=F(P_{2j-2}, a_{2j-1}, a_{2j})$ with the function
\[ F \left( \left( \begin{array}{ll} a & b \\ c & d \end{array} \right), b_1, b_2 \right) = \frac{x_1}{k} \left\{ d \frac{l}{m} \right\} \left( 1-\left\{ d \frac{l}{m} \right\} \right) b_1 + \frac{x_2}{k} \left\{ (b_1 d+c) \frac{l}{m} \right\} \left( 1-\left\{ (b_1 d+c) \frac{l}{m} \right\} \right) b_2 . \]
Theorem \ref{gausskuzmintheorem} yields
\[ \begin{split} \mathbb{E} (e^{i Y_j}-1) &= \sum_{\substack{g \in G_1 \\ b_1, b_2 \in \mathbb{N}}} (e^{i F(g, b_1, b_2)} -1) \mu \left( P_{2j-2}=g, a_{2j-1}=b_1, a_{2j}=b_2 \right) \\ &= \sum_{\substack{g \in G_1 \\ b_1, b_2 \in \mathbb{N}}} (e^{i F(g, b_1, b_2)} -1) \frac{\mu_{\mathrm{Gauss}} (a_1=b_1, a_2=b_2)}{|G_1|} + O (e^{-\tau j}) . \end{split} \]
For a fixed $g=\left( \begin{array}{cc} a & b \\ c & d \end{array} \right) \in G_1$, we have $F(g, b_1, b_2) = t_1 b_1 + t_2 (b_1) b_2$ with
\[ t_1 = \frac{x_1}{k} \left\{ d \frac{l}{m} \right\} \left( 1-\left\{ d \frac{l}{m} \right\} \right), \qquad t_2 (b_1) = \frac{x_2}{k} \left\{ (b_1 d+c) \frac{l}{m} \right\} \left( 1-\left\{ (b_1 d+c) \frac{l}{m} \right\} \right) . \]
In \cite[Lemma 20]{BO} it was shown that for any $t_1, t_2 \in (-1/2, 1/2)$, we have
\[ \begin{split} \sum_{b_1, b_2 \in \mathbb{N}} \left( e^{i(t_1 b_1 + t_2 b_2)}-1 \right) &\mu_{\mathrm{Gauss}} (a_1 = b_1, a_2 = b_2) \\ = &-\frac{1}{\log 2} \left( i \gamma t_1 + \frac{\pi}{2} |t_1| + i t_1 \log |t_1|  + i \gamma t_2 + \frac{\pi}{2} |t_2| + i t_2 \log |t_2| \right) \\ &+O \left( t_1^2 \log \frac{1}{|t_1|} + t_2^2 \log \frac{1}{|t_2|} + |t_1 t_2| \log \frac{1}{|t_1|} \log \frac{1}{|t_2|} \right) . \end{split} \]
The proof actually gives that more generally, for any constant $t_1$ and any sequence $t_2(n)$, $n \in \mathbb{N}$ with $t_1, t_2(n) \in (-1/2, 1/2)$, we have
\[ \begin{split} &\sum_{b_1, b_2 \in \mathbb{N}} \left( e^{i(t_1 b_1 + t_2 (b_1) b_2)}-1 \right) \mu_{\mathrm{Gauss}} (a_1 = b_1, a_2 = b_2) \\ = &-\frac{1}{\log 2} \left( i \gamma t_1 + \frac{\pi}{2} |t_1| + i t_1 \log |t_1|  - i \sum_{b_1, b_2 \in \mathbb{N}} t_2(b_1) b_2 R(b_1, b_2) + \sum_{b_1 \in \mathbb{N}} \frac{\frac{\pi}{2} |t_2 (b_1)| + it_2 (b_1) \log |t_2 (b_1)|}{b_1 (b_1+1)} \right) \\ &+O \left( t_1^2 \log \frac{1}{|t_1|} + T_2^2 \log \frac{1}{T_2} + |t_1| T_2 \log \frac{1}{|t_1|} \log \frac{1}{T_2} \right) , \end{split} \]
where $T_2 = \sup_{b_1 \in \mathbb{N}} |t_2 (b_1)|$, and
\[ R(b_1, b_2) = \mu_{\mathrm{Gauss}} (a_1 = b_1, a_2 = b_2) - \frac{1}{b_1 (b_1+1) b_2 (b_2 +2)} . \]
As we observed in \cite[Lemma 20]{BO} using telescoping sums, $\sum_{b_1, b_2 \in \mathbb{N}} b_2 R(b_1, b_2) = -\gamma$. In particular, we obtain a formula for $\mathbb{E} (e^{i Y_j}-1)$ in the form of an average over $G_1$. Lemma \ref{SL2Zmlemma} and formula \eqref{sumovera} show that
\[ \begin{split} \frac{1}{|G_1|} \sum_{g \in G_1} \left\{ d \frac{l}{m} \right\} \left( 1 - \left\{ d \frac{l}{m} \right\} \right) = \sum_{a \in \mathbb{Z}_m} \nu_a \left\{ \frac{a}{m} \right\} \left( 1-\left\{ \frac{a}{m} \right\} \right) &=\frac{1}{6} , \\ \frac{1}{|G_1|} \sum_{g \in G_1} \left\{ d \frac{l}{m} \right\} \left( 1 - \left\{ d \frac{l}{m} \right\} \right) \log \left( \left\{ d \frac{l}{m} \right\} \left( 1 - \left\{ d \frac{l}{m} \right\} \right) \right) &= \theta_m. \end{split} \]
Note that the previous two formulas do not depend on $l$ since $\nu_a$ is invariant under multiplication by elements of $\mathbb{Z}_m^*$. We saw in the proof of Lemma \ref{SL2Zmlemma} that $G_1$ acts transitively on the set of row vectors $V$, and that transposition is a bijection of $G_1$. Consequently for any fixed $b_1 \in \mathbb{N}$, the row vector $(1, b_1 \pmod{m}) \left( \begin{array}{cc} a & b \\ c & d \end{array} \right)^{\top}$ is uniformly distributed on $V$, and its second coordinate, $b_1 d +c \pmod{m}$ has distribution $\nu$. In particular, for any fixed $b_1 \in \mathbb{N}$ we have the same averages
\[ \begin{split} \frac{1}{|G_1|} \sum_{g \in G_1} \left\{ (b_1 d +c) \frac{l}{m} \right\} \left( 1 - \left\{ (b_1 d+c) \frac{l}{m} \right\} \right) = \sum_{a \in \mathbb{Z}_m} \nu_a \left\{ \frac{a}{m} \right\} \left( 1-\left\{ \frac{a}{m} \right\} \right) &=\frac{1}{6} , \\ \frac{1}{|G_1|} \sum_{g \in G_1} \left\{ (b_1 d +c) \frac{l}{m} \right\} \left( 1 - \left\{ (b_1 d +c) \frac{l}{m} \right\} \right) \log \left( \left\{ (b_1 d+c) \frac{l}{m} \right\} \left( 1 - \left\{ (b_1 d +c) \frac{l}{m} \right\} \right) \right) &= \theta_m. \end{split} \]
After some simplification, we arrive at
\[ \begin{split} \mathbb{E} (e^{i Y_j}-1) = &i \frac{\log k -6\theta_m -\gamma}{6\log 2} \cdot \frac{x_1}{k} - \frac{\pi}{(12 \log 2)k} \left( |x_1| + \frac{2i}{\pi} x_1 \log |x_1| \right) \\ &+i \frac{\log k -6\theta_m -\gamma}{6\log 2} \cdot \frac{x_2}{k} - \frac{\pi}{(12 \log 2)k} \left( |x_2| + \frac{2i}{\pi} x_2 \log |x_2| \right) + O \left( \frac{(\log k)^2}{k^2} + e^{-\tau j} \right) . \end{split} \]
We can compute the right-hand side of \eqref{heinrich} by summing over $\log k \ll j \le k$, thus
\[ \begin{split} \mathbb{E} \Bigg( \exp \Bigg( i \sum_{j=1}^k Y_j \Bigg) \Bigg) = &\exp \bigg( i \frac{\log k -6\theta_m -\gamma}{6\log 2} x_1 - \frac{\pi}{12 \log 2} \left( |x_1| + \frac{2i}{\pi} x_1 \log |x_1| \right) \\ &\hspace{10mm}+i \frac{\log k -6\theta_m -\gamma}{6\log 2} x_2 - \frac{\pi}{12 \log 2} \left( |x_2| + \frac{2i}{\pi} x_2 \log |x_2| \right) + O \left( \frac{(\log k)^2}{k} \right) \bigg) \\ &+ O \left( \frac{(\log k)^2}{k} \right) .  \end{split} \]
Letting $A_k$ and $s_k$ be as in the claim and replacing $x_n$ by $\frac{12 \log 2}{\pi} x_n$, $n=1,2$, we obtain
\[ \begin{split} \mathbb{E} \bigg( \exp \bigg( i &\frac{\sum_{j=1}^k X_{2j-1} - A_k}{s_k} x_1 + i \frac{\sum_{j=1}^k X_{2j} - A_k}{s_k} x_2 \bigg) \bigg) \\ &= \exp \left( - \left( |x_1| + \frac{2i}{\pi} x_1 \log |x_1| \right) \right) \exp \left( - \left( |x_2| + \frac{2i}{\pi} x_2 \log |x_2| \right) \right) +O \left( \frac{(\log k)^2}{k} \right) . \end{split} \]
This implies the pointwise convergence of the characteristic function of the random vector in the claim to that of $\mathrm{Stab}(1,1) \otimes \mathrm{Stab}(1,1)$, which proves the desired convergence in distribution.
\end{proof}

\begin{proof}[Proof of Theorem \ref{discrepancytheorem}] Fix a reduced fraction $r=l/m \in (0,1)$. We may assume that the density $\frac{\mathrm{d}\mu}{\mathrm{d}\lambda}$ is positive and Lipschitz. This assumption can be removed exactly as in the proof of Corollary \ref{gausskuzmincorollary}.

For any $M \ge 1$, let $k_M^*=k_M^*(\alpha)$ denote the random index for which $q_{k_M^*} \le M < q_{k_M^*+1}$, and let $k_M$ be the odd integer closest to $\frac{12 \log 2}{\pi^2} \log M$. Using the fact that $\log q_k$ satisfies the CLT with centering term $\frac{\pi^2}{12 \log 2} k$ and scaling term $k^{1/2}$ \cite[p.\ 194]{IK}, we immediately obtain $\mu (|\log q_{k_M} - \log M| \ge (\log M)^{1/2+\varepsilon} ) \to 0$ with any $\varepsilon>0$. Consequently, $\mu (|k_M^*-k_M| \ge (\log M)^{1/2+\varepsilon}) \to 0$. The explicit formula \eqref{schoissengeier} thus yields
\begin{equation}\label{maxminexplicit}
\begin{split} \max_{0 \le N<M} S_{N,r}(\alpha) &= \sum_{j=0}^{\frac{k_M-1}{2}} \{ q_{2j} r \} \left( (1-\{ q_{2j} r \}) a_{2j+1} + \{ q_{2j+1}r \} - \{ q_{2j-1} r \} \right) + \xi_M(\alpha), \\ \min_{0 \le N<M} S_{N,r}(\alpha) &= - \sum_{j=0}^{\frac{k_M-1}{2}} \{ q_{2j+1} r \} \left( (1-\{ q_{2j+1} r \}) a_{2j+2} + \{ q_{2j+2}r \} - \{ q_{2j} r \} \right) + \xi_M'(\alpha) \end{split}
\end{equation}
with error terms $\xi_M(\alpha)$, $\xi_M'(\alpha)$ which, outside a set of $\mu$-measure $o(1)$, satisfy
\[ | \xi_M(\alpha)|, |\xi_M'(\alpha)| \le \sum_{j=k_M-(\log M)^{1/2+\varepsilon}}^{k_M+(\log M)^{1/2+\varepsilon}} a_j +O(1) . \]
A classical result of Khintchine \cite[p.\ 204]{IK} states that $\sum_{j=1}^k a_j / (k \log k) \to 1/\log 2$ in measure. Hence
\[ \mu \left( \sum_{j=k_M-(\log M)^{1/2+\varepsilon}}^{k_M+(\log M)^{1/2+\varepsilon}} a_j \ge (\log M)^{1/2+2 \varepsilon} \right) \ll \mu_{\mathrm{Gauss}} \left( \sum_{1 \le j \ll (\log M)^{1/2+\varepsilon}} a_j \ge (\log M)^{1/2+2\varepsilon} \right) \to 0. \]
In particular, $\xi_M(\alpha), \xi_M'(\alpha) = o(\log M)$ in $\mu$-measure, and are thus negligible.

Relation \eqref{Pnae} shows that
\[ \sum_{j=0}^{\frac{k_M-1}{2}} \{ q_{2j} r \} \cdot \{ q_{2j-1} r \} = \rho k_M +o(k_M) \quad \textrm{and} \quad \sum_{j=0}^{\frac{k_M-1}{2}} \{ q_{2j} r \} \cdot \{ q_{2j+1} r \} = \rho k_M +o(k_M) \]
hold for a.e.\ $\alpha$ with the same constant $\rho$. In particular, the error terms in the previous formula are $o(\log M)$ in $\mu$-measure, and \eqref{maxminexplicit} simplifies to
\[ \begin{split} \max_{0 \le N<M} S_{N,r}(\alpha) &= \sum_{j=0}^{\frac{k_M-1}{2}} \{ q_{2j} r \} (1-\{ q_{2j} r \}) a_{2j+1} + o(\log M) \quad \textrm{in $\mu$-measure}, \\ \min_{0 \le N<M} S_{N,r}(\alpha) &= - \sum_{j=0}^{\frac{k_M-1}{2}} \{ q_{2j+1} r \} (1-\{ q_{2j+1} r \}) a_{2j+2} +o(\log M) \quad \textrm{in $\mu$-measure} . \end{split} \]
Lemma \ref{Xjlemma} yields the desired limit law with centering term
\begin{multline*}
\frac{1}{6 \log 2} \cdot \frac{k_M}{2} \log \frac{k_M}{2} - \frac{6 \theta_m +\gamma +\log \frac{12 \log 2}{\pi}}{6 \log 2} \cdot \frac{k_M}{2} \\ = \frac{1}{\pi^2} \log M \log \log M - \frac{6 \theta_m + \gamma + \log (2 \pi)}{\pi^2} \log M +O(\log \log M)
\end{multline*}
and scaling term $\frac{\pi}{12 \log 2} \cdot \frac{k_M}{2} = \frac{1}{2 \pi} \log M +O(1)$.
\end{proof}

\section*{Acknowledgments}

The author is supported by the Austrian Science Fund (FWF) project M 3260-N. I would like to thank the referee for a careful reading of the manuscript and valuable comments.

\end{document}